\providecommand\@dotsep{5}
\def\listtodoname{List of Todos}
\def\listoftodos{\@starttoc{tdo}\listtodoname}
\numberwithin{equation}{section}
\numberwithin{equation}{section}
\newtheorem{thm}{Theorem}[section]
\newtheorem{cor}[thm]{Corollary}
\newtheorem{lem}[thm]{Lemma}
\newtheorem{defn}[thm]{Definition}
\newtheorem{exa}[thm]{Example}
\newtheorem{rmk}[thm]{Remark}
\newcommand{\N}{\mathbb{N}}
\newcommand{\Z}{\mathbb{Z}}
\newcommand{\X}{\mathbb{X}}
\newcommand{\Y}{\mathbb{Y}}
\newcommand{\W}{\mathbb{W}}
\def\R{\mathbb R}
\begin{document}

\title[\hfilneg \hfil On almost automorphic type solutions of abstract integral equations...]{On almost 
automorphic type solutions of abstract Integral equations, a Bohr-Neugebauer type property and some applications.}
\date{\today}

\author[A. Ch\'avez, M. Pinto \& U. Zavaleta \hfil \hfilneg]
{Alan Ch\'avez,  Manuel Pinto \& Ulices Zavaleta}  % in alphabetical order

\address{ Alan Ch\'avez \& Ulices Zavaleta\hfill\break
Departamento de Matem\'aticas \newline 
Facultad de Ciencias F\'isicas y Matem\'aticas\newline 
Universidad Nacional de Trujillo\newline 
Trujillo-Per\'u.}
\email{ \href{mailto:ajchavez@unitru.edu.pe, aulicesz@gmail.com}{ajchavez@unitru.edu.pe, aulicesz@gmail.com} }

\address{Manuel Pinto \hfill\break
Departamento de Matem\'aticas\newline 
Facultad de Ciencias\newline 
Universidad de Chile\newline 
Santiago-Chile.}
\email{\href{mailto:pintoj.uchile@gmail.com}{pintoj.uchile@gmail.com}}

%\thanks{Giovany M. Figueiredo was partially
%supported by  CNPq, Brazil. Marcos T.O. Pimenta was supported by Fapesp and CNPq, Brazil. Gaetano Siciliano  was partially supported by
%Fapesp and CNPq, Brazil. }
%\subjclass[2000]{35A15, 35S05, 58E05, 74G35}
%\date{\today}
%\keywords{Fractional Laplacian, multiplicity of solutions, Ljusternick-Schnirelmann category, Morse theory}

%\author{
%{\bf\large Giovany M. Figueiredo}\hspace{2mm}
%{\bf\large}\vspace{1mm}\\
%{\it\small Universidade Federal do Par\'a, Faculdade de Matem\'atica}\\
%{\it\small 66075-110, Bel\'em - PA, Brazil }\\
%{\it\small e-mail: giovany@ufpa.br}
%\\ \\
%{\bf\large Gaetano Siciliano}\hspace{2mm}
%{\bf\large}\vspace{1mm}\\
%{\it\small Universidade de S\~ao Paulo - USP, Departamento de  Matem\'atica e Estat\'istica }\\
%{\it\small 05508-090, S\~ao Paulo - SP, Brazil }\\
%{\it\small e-mail: sicilian@ime.usp.br} }
%
%\title{On a  multiplicity result via Morse theory for a problem with fractional Laplace in $\mathbb R^{N}$
%\thanks{Giovany M. Figueiredo was partially
%supported by  CNPq/Brazil . Gaetano Siciliano  was partially supported by
%Fapesp and CNPq, Brazil. }}
%\date{}

\pretolerance10000

%\begin{document}

\begin{abstract}
\noindent In the present work we give some sufficient conditions to obtain a unique almost automorphic solution to abstract nonlinear integral equations which are simultaneously of advanced and delayed type and also a unique asymptotically almost automorphic mild solution to abstract integro-differential equations with nonlocal initial conditions, both situations are posed on Banach spaces. Also, we develop a Bohr-Neugebauer type result for the abstract integral equations. Before that, we introduce the notion of $\lambda$-bounded functions, develop the appropriate abstract theory and discuss the almost periodic situation. As applications, we study the existence of an asymptotically almost automorphic solution to integro-differential equations modeling heat conduction in materials with memory and also the existence of the almost automorphic solution to semilinear parabolic evolution equations with finite delay.
\end{abstract}

%\thanks{Claudianor Alves was partially supported by CNPq/Brazil Proc. 304036/2013-7 ; Giovany M. Figueiredo was partially
%supported by  CNPq, Brazil; Gaetano Siciliano  was partially supported by
%Fapesp and CNPq, Brazil. }
\subjclass[2000]{47D06, 47A55, 34D05, 34G10}
\keywords{almost automorphic functions, asymptotically almost automorphic functions, abstract differential equations, abstract integral equations, abstract integro-differential equations, Resolvent operator, evolution family, mild solutions.}

\maketitle

\section{Introduction}\label{intro}
It is well know that integral and integro-differential equations have taken a great interest due to their appearance  in several problems coming from pure mathematics as well as from the mathematical study of real life phenomena. In the last direction, we mention its applications in the following topics (among others): the heat conduction in materials with memory, transport phenomena in Biological Systems, optimal replacement problems in engineering and production economics, and so on, see for instance the references \cite{apMem,apBio,apEco}. Of course, this justifies the enormous progress in the qualitative and numerical study of them, for the numerical treatment, see the references \cite{SBAFPATFE,SWuSGa,VSAS,LWSWYY,LWYY2} (and references cited therein). In the context of diffentential, integral or integro-differential equations, it is common to start with the study of periodic problems (as a first instance), and then go accross its natural functional generalizations. Therefore, several works are devoted to the study of integral and integro-differential equations on the following complete function spaces: Periodic functions (with fixed period), almost periodic, pseudo-almost periodic, almost automorphic, pseudo almost automorphic and other related generalizations, see for instance \cite{SAbbas,JPCCC,TFuru,MNIslam,VKCLVP,Pint10} (and the references therein). In the mentioned works, the existence, uniqueness and stability of solution is treated. One observation is that in almost all the existing literature, only are studied integral equations which are of advanced or of delayed type, but there is no much work about equations that have a combination of  both situations. In the present work we study equations which are simultaneously of advanced and delayed type. 

Our work, has three main objectives. The first one is devoted to analyze and provide some sufficient conditions to ensure a unique almost automorphic solution of the following abstract integral equation of advanced and delayed type:
\begin{equation}\label{eq5}
y(t)=f(t,y(t),y(a_0(t)))+\int_{-\infty}^{t}C_1(t,s,y(s),y(a_1(s)))ds + \int_{t}^{+\infty}C_2(t,s,y(s),y(a_2(s)))ds\; ,
\end{equation}
and particularly setting $C_2\equiv 0$, to the equation
\begin{equation}\label{eq4}
y(t)=f(t,y(t),y(h_0(t)))+\int_{-\infty}^{t}C_1(t,s,y(s),y(h_1(s)))ds\; ;
\end{equation}
where for $i=1,2$ the functions $C_i$ are $\lambda_i$-bounded and Bi-almost automorphic kernels (see next section) and $f$ is an almost automorphic function in a concrete sense.

Our second objective, is provide the sufficient conditions for the existence of a unique asymptotically almost automorphic solution of the following integral equation:
\begin{equation}\label{eqNew}
y(t)= f(t,y(t),y(b_0(t)))+\int_0^t B_1(t,s,y(s),y(b_1(s)))ds + \int_t^{+\infty}B_2(t,s,y(s),y(b_2(s)))ds\; .
\end{equation}

Also, we study the existence of a unique asymptotically almost automorphic mild solution of the following nonautonomous and  integro-differential equation with nonlocal initial condition:
\begin{equation}\label{eqX}
u'(t)= A(t)u(t)+\int_0^t{B(t,s)u(s)ds + g(t,u(t))},\ t\geq0\; ,
\end{equation}
\begin{equation}\label{eqXX}
u(0)=u_0 + h(u),
\end{equation}
where $u_0 \in \X$, $A(t):D(A(t))\subset \X\rightarrow \X , \, \; \, t\in \R^+$ and $B(t,s):D(B(t,s))\subset \X\rightarrow \X ,\ t\geq s\geq  0$ are linear operators on the Banach space $\X$; $g(\cdot,\cdot)$  is an asymptotically almost automorphic function and $h$ satisfy some technical assumptions.

Finally, the third objective is to give some applications of our results: we provide applications to the heat conduction in materials with memory and also to semilinear parabolic evolution equations.

%TALK ABOUT NONLOCAL CAUCHY PROBLEMAS: ORIGIN, ADAVANTAGES, SOME WORKS.\\

%\noindent The study of the mentioned equations are realized using operators defined naturally on concrete complete metric spaces; that is, the problems are reduced to the analysis of the following abstract equation:
%\begin{equation}\label{EQ0.1}
% Tu=u,
%\end{equation}
%in which $T:\mathcal{M}\to\mathcal{M}$ is an appropriated operator acting on a complete metric space $\mathcal{M}$. As is classical, the basic idea is to give conditions ensuring the contractiveness of the operator $T$, then the uniqueness of the   solution to equation (\ref{EQ0.1}) comes by an application of the Banach's contraction theorem. 

We also mention that, our work, is mainly motivated by the paper \cite{Pint10}, in which the author gives sufficient conditions to obtain a unique pseudo-almost periodic solution of integral equations of advanced and delayed type; also, our work has its roots in the paper \cite{16}.

On the other hand, recall that, under some conditions the Bohr-Neugebauer property for a differential equation, affirms that a bounded solution of an almost periodic differential equation is in fact almost periodic. This result extend the so called Massera's results in the periodic framework, which roughly asserts that, if a periodic linear systems of ordinary differential equations has a bounded solution, then it will have a periodic solution. In the present work we prove a slightly modified version in the direction of Bohr-Neugebauer for the integral equations (\ref{eq5}) and (\ref{eq4}). That is, if $y$ is a solution for (\ref{eq5}) or (\ref{eq4}) but with relatively compact range, then $y$ is almost automorphic. More precisely, under certain conditions we prove that (c.f Theorem \ref{Massteo}):
%Although we are interested in the existence and uniqueness of the almost automorphic solution, we also present a reusult in %the direction of Bohr-Neugebauer for equations (\ref{eq5}) and (\ref{eq4}) in the following sense :

 {\sl  ``A solution of the integral equation (\ref{eq5}) (or (\ref{eq4})) is almost automorphic, if and only if, it has relatively compact range"}.

The Bohr-Neugebauer property, has been investigated for several kind of differential equations in the literature: ordinary and partial differential equations, differential equations with delay and also to functional differential equations, see for instance \cite{MAdiAKh,BrES} and references therein. To the best of our knowledge, there is no much results of this kind for integral equations.

The equations (\ref{eq5}) and (\ref{eq4}) appear naturally in concrete situations, for example (\ref{eq4}) can be obtained
 under suitable conditions as the mild solution of abstract Cauchy problems \cite{PazzyB}, while (\ref{eq5}) can be obtained as the solution of a neutral differential equation under conditions of exponential dichotomy \cite{Burt,BurtFuru,Pint100}, on the other hand equation (\ref{eqX}) concretely describe the dynamics of an important physical phenomena 
   \cite{apMem,SIvaPand} (see section \ref{appl}).
 
%  In \cite{CCCL} Cuevas and Lizama have studied integral equations with infinite delay, particularly we obtain new results applicable to the equation treated by them. Naturally, results on almost periodicity [CITE] are improved.
%
%\subsection{Strategy and organization}

{ \bf Our paper is organized as follows:} In section \ref{prelim} we recover the notion of Bi-almost automorphic functions and give the definition of $\lambda$-bounded functions; also, we summarize and develop some results on almost automorphic functions and asymptotically almost automorphic functions that we need in the forthcoming sections; furthermore, we prove that those spaces are invariant under some integral operators. In section \ref{aasol} we study the existence and uniqueness of the almost automorphic solution to the integral equations (\ref{eq5}) and (\ref{eq4}). Section \ref{secB-N} is devoted to  our Bohr-Neugebauer's type result. In section \ref{aaasol} we analyze the existence and uniqueness of the asymptotically almost automorphic mild solution to equation (\ref{eqX})-(\ref{eqXX}). Finally, in section \ref{appl} we present applications to heat conduction in materials with memory and also to semilinear parabolic evolution equations with finite delay.

\section{Almost automorphic functions, asymptotically almost automorphic functions and some basic results.}\label{prelim}
\subsection{Definitions, notations and some known results.}

In the present paper, $\Z$ and $\R$ will denote the group of integer numbers and the field of real numbers respectively, $\R^+=[0,+\infty[$, $\R^-=]-\infty,0]$; while  $\X\, ,\Y$ and $\mathbb{W}$ are Banach spaces. The space of bounded and continuous functions from $\Y$ to $\X$ is denoted by $BC(\Y;\X)$, which is a Banach spaces under the norm of uniform convergence; and $C(\Y;\X)$ will denote the space of continuous functions from $\Y$ to $\X$.

We start with the definition of almost automorphic function given by Salomon Bochner \cite{5aaa,5aab,5aa,5bb}, who was the first mathematician that introduced and studied them. The definition is as follows
\begin{defn}\label{def1}
 A function $f\in BC(\R;\X)$ is said to be almost automorphic if given any
sequence $\{s_n'\}$ of real numbers, there exists a subsequence $\{s_n\}\subseteq\{s_n'\}$ and a function $\tilde f$,
such that the following pointwise limits holds: for each $t \in \mathbb{R}$,
$$\lim_{n\to\infty}f(t+s_n)=\tilde f(t)\; ,$$
and
$$\ \lim_{n\to\infty}\tilde f(t-s_n)=f(t)\; .$$
\end{defn}
Note that, in this definition, the limits ar taken only pointwise in $\mathbb{R}$; a strong version of this definition appear if we prefer {\sl uniform convergence} of the limits instead of pointwise convergence.  Of course, assuming uniform convergence  we carry out to the space of {\sl almost periodic} functions, which are also called {\sl Bochner almost periodic functions} or {\sl uniform almost periodic functions}, see \cite{39Cord,TDBook,29GM}. Let us denote by $AA(\R;\X)$ the space of almost automorphic functions from $\R$ to $\X$ and by $AP(\R;\X)$  the space of almost periodic functions from $\R$ to $\X$. 
%
%In $AA(\R;\X)$, let us introduce the norm: Let $f \in AA(\R,\X)$, its norm is
%$$||f||_{\infty}=\sup_{t\in \R}||f(t)||_{\X}\; .$$
%That is, in $AA(\R,\X)$ is introduced the topology of uniform convergence.

The following theorem summarize some properties of almost automorphic functions, for a  proof and further properties  consult the references \cite{39Cord,TDBook,29GM}.

\begin{thm}
Let $f,g \in AA(\R;\X)$, then
\begin{enumerate}
\item For every $\alpha \in \R$, $f+\alpha g \in AA(\R;\X)$.
\item $AA(\R;\X)$ is a Banach space under the norm of uniform convergence in $\R$. That is, under the norm
$$||f||_{\infty}=\sup_{t\in \R}||f(t)||\, ,\, \; \, f \in AA(\R,\X)\, .$$
\item If $\tilde{f}$ is the function in definition $\ref{def1}$, then
$$||f||_{\infty}= ||\tilde{f}||_{\infty}.$$
\item $AP(\R;\X)$ is a closed subspace of $AA(\R;\X)$.
\item The range of $f$; i.e. $\mathcal{R}_f=\{f(t)\; :\; t\in \mathbb{R}\}$, is relatively compact in $\X$.
\end{enumerate}
\end{thm}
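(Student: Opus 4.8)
The plan is to treat the five assertions in an order that lets the later ones lean on the earlier, since the whole theorem rests on a single mechanism: extracting, from any real sequence, a subsequence along which the relevant translates converge. I would begin with (1). Given $f,g\in AA(\R;\X)$, $\alpha\in\R$ and an arbitrary sequence $\{s_n'\}$, I first apply Definition \ref{def1} to $f$ to obtain a subsequence along which $f(t+s_n)\to\tilde f(t)$ pointwise, and then apply it to $g$ \emph{along that subsequence} to extract a further subsequence $\{s_n\}$ along which also $g(t+s_n)\to\tilde g(t)$. Along this common sequence, linearity of the limit gives $(f+\alpha g)(t+s_n)\to \tilde f(t)+\alpha\tilde g(t)$ and, symmetrically, $(\tilde f+\alpha\tilde g)(t-s_n)\to (f+\alpha g)(t)$, so $f+\alpha g\in AA(\R;\X)$ with $\widetilde{f+\alpha g}=\tilde f+\alpha\tilde g$. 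This nested extraction of a common subsequence is the routine but essential device reused throughout.

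Next I would dispatch (3), which is short and turns out to power (2). Fixing the subsequence and limit function $\tilde f$ of Definition \ref{def1}, continuity of the norm together with the first limit gives $\|\tilde f(t)\|=\lim_n\|f(t+s_n)\|\le\|f\|_\infty$ for each $t$, hence $\|\tilde f\|_\infty\le\|f\|_\infty$; the second limit $\tilde f(t-s_n)\to f(t)$ reverses the inequality, yielding $\|f\|_\infty=\|\tilde f\|_\infty$.

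The main obstacle is (2), specifically the completeness. Since $AA(\R;\X)\subset BC(\R;\X)$ is a linear subspace by (1) and $BC(\R;\X)$ is complete under $\|\cdot\|_\infty$, it suffices to prove $AA(\R;\X)$ is closed. Let $f_k\in AA(\R;\X)$ with $f_k\to f$ uniformly, $f\in BC(\R;\X)$. Given a sequence $\{s_n'\}$, I would run a diagonal argument: extract nested subsequences so that $f_k(t+\,\cdot\,)$ converges pointwise to $\tilde f_k$ for every $k$, and pass to the diagonal sequence $\{s_n\}$, along which $f_k(t+s_n)\to\tilde f_k(t)$ for all $k$ simultaneously. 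By (1), each difference $f_k-f_j$ lies in $AA(\R;\X)$, so (3) gives $\|\tilde f_k-\tilde f_j\|_\infty=\|f_k-f_j\|_\infty$; thus $\{\tilde f_k\}$ is Cauchy and converges uniformly to some $\tilde f\in BC(\R;\X)$. A standard $3\varepsilon$ estimate,
\[
\|f(t+s_n)-\tilde f(t)\|\le \|f-f_k\|_\infty+\|f_k(t+s_n)-\tilde f_k(t)\|+\|\tilde f_k-\tilde f\|_\infty,
\]
choosing $k$ large to control the outer terms and then $n$ large for the middle one, shows $f(t+s_n)\to\tilde f(t)$; the symmetric estimate gives $\tilde f(t-s_n)\to f(t)$. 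Hence $f\in AA(\R;\X)$ and the space is Banach.

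Finally, (4) and (5) are quick. For (4), every (Bochner) almost periodic function satisfies Definition \ref{def1} because uniform convergence of translates implies the required pointwise limits, so $AP(\R;\X)\subseteq AA(\R;\X)$; as $AP(\R;\X)$ is itself complete under the same norm $\|\cdot\|_\infty$, it is a closed subspace. For (5), to see that $\mathcal R_f$ is relatively compact I would take any sequence $\{f(t_n)\}$ in the range, apply Definition \ref{def1} to $\{s_n'\}=\{t_n\}$ to get a subsequence $\{t_{n_k}\}$ and limit $\tilde f$, and evaluate the first limit at $t=0$: $f(t_{n_k})=f(0+t_{n_k})\to\tilde f(0)\in\X$. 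Thus every sequence in $\mathcal R_f$ has a convergent subsequence, proving relative compactness. The only genuinely delicate point is the diagonalization and the interchange of limits in (2); everywhere else the two defining limits of Definition \ref{def1} are applied almost verbatim.
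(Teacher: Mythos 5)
Your proposal is correct, but note that the paper itself offers no proof of this theorem: it is stated as a summary of known properties with the proofs deferred to the cited references (Corduneanu, Diagana, N'Gu\'er\'ekata). What you have written is essentially the classical argument from that literature: nested extraction of a common subsequence for (1), the norm-preservation identity (3) feeding a diagonal-plus-$3\varepsilon$ completeness proof for (2), Bochner's criterion for the inclusion $AP(\R;\X)\subseteq AA(\R;\X)$ in (4), and evaluation of the translate limit at $t=0$ for (5). The one imprecision worth flagging is in (2): the limit functions $\tilde f_k$ of almost automorphic functions need not be continuous, so you cannot assert that the uniform limit $\tilde f$ lies in $BC(\R;\X)$; this is harmless, since Definition \ref{def1} imposes no continuity on the limit function, and your Cauchy estimate only requires the $\tilde f_k$ to be bounded, which follows from (3). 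Also, in (4), the step from uniform convergence $f(\cdot+s_n)\to\tilde f$ to the return limit $\tilde f(t-s_n)\to f(t)$ deserves the explicit substitution $\|\tilde f(t-s_n)-f(t)\|\le\sup_{u}\|\tilde f(u)-f(u+s_n)\|$, which is exactly where uniformity (and not mere pointwise convergence) is used; with that line added, your proof is complete and self-contained except for the standard fact that $AP(\R;\X)$ is complete under the supremum norm.
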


\begin{exa}
The classical example of an almost automorphic function which is not almost periodic is the following one: $\psi: \R \to \R$, defined by 
$$\psi(t)=\sin \left( \dfrac{1}{2+\cos(t)+\cos(\sqrt{2}t)} \right) \; .$$
\end{exa}
A natural generalization of the almost automorphic functions are given by the class of functions which are "almost automorphic at infinity", they are called "asymptotically almost automorphic" functions. In order to introduce this new class of functions, let us take account of the following definition.
\begin{defn}\label{def2}
 A function $f\in BC(\R\times\Y;\X)$ is said to be almost automorphic on bounded subsets of $\Y$, if given
any bounded subset $\mathcal{B}$ of $\Y$ and a sequence $\{s_n'\}$ of real numbers, there exists a subsequence
$\{s_n\}\subseteq\{s_n'\}$ and a function $\tilde f$, such that the following limits holds:
$$\lim_{n\to\infty}f(t+s_n,x)=\tilde f(t,x)\; ,$$
$$ \lim_{n\to\infty}\tilde f(t-s_n,x)=f(t,x)\; ,$$
where the limits are pointwise in $t\in\R\, $ and uniformly for $x$ in $\mathcal{B}$.
\end{defn}
We denote these class of functions by $AA(\R\times\Y;\X)$.
%The spaces $AP(\R;\X),AA(\R;\X)$ and $AA(\R\times\Y;\X)$ becomes Banach spaces under
%the norm of uniform convergence; the proof of this fact can be found in the references [M'gu\'erekata, Diagana, etc]; %\cite{39,5aaa,5aa,5bb,29,29GM,CHZ},  
% moreover in these references, other important properties are presented.\\
%
%
%
Now, Let us define the following function spaces 
$$C_0(\R^+;\X)=\Big{\{}\phi \in C(\R^+;\X): \lim_{t\to +\infty}||\phi(t)||=0 \Big{\}}\; ,$$
and
$$
C_0(\R^+\times \Y;\X)=\Big{\{} \phi \in C(\R^+\times\Y;\X): \lim_{t\to +\infty}||\phi(t,y)||=0,\  {\rm uniformly\ on\  bounded\ 
subsets\ of}\ \Y  \Big{\}}\; .$$
The following is the definition of asymptotically almost automorphic functions:

%
% $ {\rm on\  compact\ 
%subsets\ of}\  \X \Big{\{}\; .
%.$

\begin{defn} A continuous function $g:\R^+\to\X$ (respectively $g:\R^+\times\Y\to\X$) is asymptotically almost
automorphic (respectively asymptotically almost automorphic in $t\in\R$, uniformly on bounded subsets of $\Y$) if
$g=f+\phi$, where $f \in AA(\R;\X)$ (respectively $f \in AA(\R\times\Y;\X)$) and $\phi \in C_0(\R^+;\X) $
(respectively $C_0(\R^+\times \Y;\X)$).
\end{defn}
For the asymptotically almost automorphic function $g=f+\phi$, the function $f$ is called the almost automorphic component, while the function $\phi$ is called its ergodic component. We denote by $AAA(\R^+;\X)$ the space of asymptotically almost automorphic functions and by $AAA(\R^+\times\Y;\X)$ the space of functions which are asymptotically almost automorphic in $t\in \R$ uniformly on bounded subsets of $\Y$.

In the space $AAA(\R^+;\X)$ we can define a norm: let $g\in AAA(\R^+;\X)$ with $g=f+\phi$, then  
\begin{equation}\label{NormAAA}
||g||:=\sup_{t\in\R}||f(t)|| +\sup_{t\in\R^+}||\phi(t)||\, .
\end{equation}

\noindent The following theorem summarize some properties of asymptotically almost automorphic functions (c.f. \cite{16})
\begin{thm}\label{TeoAAA}
We have %Let $f,g \in AAA(\R;\X)$, then
\begin{enumerate}
\item The space $AAA(\R^+;\X)$ becomes a Banach space under the norm $(\ref{NormAAA})$.
\item $AAA(\R^+;\X)=AA(\R;\X)\oplus C_0(\R^+;\X)$. That is, the decomposition of an asymptotically almost automorphic function in its almost automorphic and ergodic components, is unique.
%\item if $\tilde{f}$ is the function in definition \ref{def1}, then
%$$||f||= ||\tilde{f}||\; .$$
\item Let $g \in AAA(\R^+;\X)$, then its range is relatively compact in $\X$.
\end{enumerate}
\end{thm}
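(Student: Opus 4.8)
The plan is to prove the three assertions in a logically convenient order, treating the uniqueness of the decomposition (item (2)) first, since this is exactly what makes the norm (\ref{NormAAA}) well-defined, and only then deducing completeness (item (1)) and relative compactness of the range (item (3)). The whole argument rests on one decisive observation, namely that
$$ AA(\R;\X)\cap C_0(\R^+;\X)=\{0\}. $$
To see this, suppose $h\in AA(\R;\X)$ satisfies $\|h(t)\|\to 0$ as $t\to+\infty$. Fix any sequence $s_n'\to+\infty$ and extract, by almost automorphy, a subsequence $\{s_n\}$ and a function $\tilde h$ with $h(t+s_n)\to\tilde h(t)$ and $\tilde h(t-s_n)\to h(t)$ pointwise in $t$. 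For each fixed $t$ we have $t+s_n\to+\infty$, which forces $h(t+s_n)\to 0$ and hence $\tilde h(t)=0$ for all $t$; feeding $\tilde h\equiv 0$ into the second limit yields $h(t)=\lim_n\tilde h(t-s_n)=0$ for every $t$. Thus $h\equiv 0$. This reconstruction step, in which the second (backward) limit of the definition of almost automorphy is used to recover $h$ from the identically zero function $\tilde h$, is the only genuinely nonroutine point of the whole proof.

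Uniqueness of the decomposition is then immediate. If $g=f_1+\phi_1=f_2+\phi_2$ with $f_i\in AA(\R;\X)$ and $\phi_i\in C_0(\R^+;\X)$, then on $\R^+$ the function $f_1-f_2=\phi_2-\phi_1$ is simultaneously almost automorphic and in $C_0(\R^+;\X)$, so by the observation above it vanishes; consequently $f_1=f_2$ and $\phi_1=\phi_2$. This proves item (2), shows that the norm (\ref{NormAAA}) does not depend on any choice, and exhibits the linear bijection $g\mapsto(f,\phi)$ as an isometry of $AAA(\R^+;\X)$ onto $AA(\R;\X)\times C_0(\R^+;\X)$ endowed with the sum of the two sup-norms.

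For item (1) I would simply transport completeness across this isometry. Given a Cauchy sequence $g_n=f_n+\phi_n$ in $AAA(\R^+;\X)$, the very definition of the norm forces $(f_n)$ to be Cauchy in $AA(\R;\X)$ and $(\phi_n)$ to be Cauchy in $C_0(\R^+;\X)$. Since $AA(\R;\X)$ is a Banach space (property (2) of the theorem on almost automorphic functions recalled above) and $C_0(\R^+;\X)$ is a closed subspace of $BC(\R^+;\X)$, hence complete, we obtain limits $f$ and $\phi$ in the respective spaces, and then $g_n\to f+\phi\in AAA(\R^+;\X)$ in the norm (\ref{NormAAA}).

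Finally, for item (3), write $g=f+\phi$. The set $\{f(t):t\in\R^+\}$ is contained in the range $\mathcal{R}_f$, which is relatively compact by property (5) of the same theorem. For the ergodic part, given $\varepsilon>0$ choose $T$ with $\|\phi(t)\|<\varepsilon$ for $t>T$; then $\phi([0,T])$ is compact, being the continuous image of a compact interval, while $\{\phi(t):t>T\}$ is contained in the ball of radius $\varepsilon$ about $0$. Covering $\phi([0,T])$ by finitely many $\varepsilon$-balls shows $\mathcal{R}_\phi$ is totally bounded, hence relatively compact in the Banach space $\X$. Since the addition map is continuous, $\overline{\{f(t):t\geq 0\}}+\overline{\mathcal{R}_\phi}$ is compact, and as it contains $\mathcal{R}_g=\{f(t)+\phi(t):t\geq 0\}$, the range of $g$ is relatively compact. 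No step beyond the vanishing lemma presents any difficulty; everything else is standard Banach-space bookkeeping.
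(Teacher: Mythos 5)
Your proof is correct, and it is essentially the standard argument: the paper itself states this theorem without proof, deferring to the reference \cite{16} of Ding--Xiao--Liang, and the route taken there rests on exactly the vanishing lemma you isolate, namely that a function in $AA(\R;\X)$ whose norm tends to $0$ as $t\to+\infty$ must vanish identically (proved, as you do, by first forcing $\tilde h\equiv 0$ along a sequence $s_n\to+\infty$ and then recovering $h$ from $\tilde h$ via the backward limit). From that lemma your deductions of uniqueness of the decomposition, completeness via the isometry onto $AA(\R;\X)\times C_0(\R^+;\X)$, and relative compactness of the range are all sound, so you have in effect supplied the details the paper leaves to the cited literature.
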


%It can be shown that the space $AAA(\R^+;\X)$ is a Banach space with this norm.\\

The following definition, which is the Bi-almost automorphicity, is a crucial ingredient in our approach. 

\begin{defn}\label{defBaa} A jointly continuous function $C:\R\times\R\times\X\times\Y\to \W$ is Bi-almost automorphic in $(t,s)\in \R\times \R$ uniformly for $(x,y)$ on bounded subsets of $\X\times\Y$ if 
%there exist positive functions $\lambda, \eta:\R\times\R\to \R$ such that
% given any compact subset $K\subset \X\times\Y$, 
%(resp. $\nu$), 
%if it is Bi-almost automorphic, and there is $\mu \in L^1(\R^+)$ (resp. $\nu \in L^1(\R^-)$) such that given 
given any sequence $\{s_n\}$ of real numbers and any bounded set $\mathcal{B}\subset \X\times\Y$, there exists a subsequence $\{s'_n\}\subseteq \{s_n\}$,
such that the function
% a function $\tilde C:\R\times\R\times\X\times\X\to \X$ and $N_0=N_0(t,s,\epsilon)\in \N$, such that for every  $n\geq N_0$ we have 
$$\tilde C(t,s,x,y):=\lim_{n\to \infty}C(t+s'_n,s+s'_n,x,y)\, ,$$ %\, \, \, (x,y) \in \mathcal{B}.$$
is well defined for any $(x,y) \in \mathcal{B}$ and each $(t,s)\in \R\times\R$, and also we have the limit
$$\lim_{n\to \infty}\tilde C(t-s'_n,s-s'_n,x,y)=C(t,s,x,y)\, ,$$ %, \, \, \, (x,y) \in \mathcal{B}\, ,$$ç
for any $(x,y) \in \mathcal{B}$ and each $(t,s)\in \R\times\R$.
%(resp.$$||C(t+s'_n,s+s'_n)-\tilde C(t,s)||\leq \epsilon \lambda (t,s))$$
\end{defn}
This definition was early stated in \cite{TJXX} and further inspired the stochastic version \cite{ZCWLin}, see also \cite{TDBook}. We mention that, the discrete counterpart of the Bi-almost automorphicity was advanced in the following works of the first two authors \cite{ACh2,ACh3} see also \cite{MScTh}. In the cited works, the reader may found examples of Bi-almost automorphic functions in its continuous, stochastic and discrete versions (respectively).

%\noindent  (OJO: NOTAR QUE LA DEFINICION ES PUNTUAL, QUIZAS SE NECESITE PNTUAL EN $t$ Y UNIFORME EN $s$ EN EL MOMENTO DE INTEGRAR).\\\\

%
%\begin{defn}We say that a function $F:\R\times\R\times\X\to\Y$ is in $C_0^{\theta}(\R\times\X;\Y)$, if there is a function $\theta:\R\times\R\to \R^+$ such that
%$$|F(t,s,x)|\leq\theta(t,s)\tilde{F}(s,x),\, \, \, t,s \in \R, \, \, \, x\in \X$$
%where $\tilde{F}\in C_0(\R\times\X;\X)$.
%\end{defn}
%
%\begin{defn}
%We say that $f\in AAA_{(\lambda,\eta;\theta)}(\R\times\R\times\X;\Y)$ if $f=f^a+f^0$, where $f^a$ is $(\lambda,\eta)$-Bi-AA, and $f^0\in C_0^{\theta}$.
%\end{defn}
Now we specify what we understand by a $\lambda$-bounded function. % need the following two additional definitions
\begin{defn}
We say that a jointly continuous function $C:\R\times\R\times\X\times\Y\to \W$ is $\lambda$-bounded if there exist a positive function $\lambda:\R\times\R\to \R$ such that for every $\tau \in \R$ we have
$$||C(t+\tau,s+\tau,x,y)||_{\mathbb{W}}\leq \lambda(t,s),$$
%and 
%$$||\tilde{C}(t,s,x,y)||_{\mathbb{W}}\leq \eta(t,s)\; ;$$
where the inequality is uniform for $(x,y)$ on bounded subsets of $\X \times \Y$\,. 
%The function $\tilde{C}$ is defined in definition $\ref{defBaa}$.
%
%For $C\in C_0^{\theta}(\R\times \R^+ \times \X \times \Y; \X)$, if  there exist a function $\theta:\R \times \R^+ \to \R^+$ such that, 
%$$||F(t,s,y,x)||\leq \theta(t,s)\tilde{F}(s,y,x),\;  t\in\R,\; s \in \mathbb{R}^+,\;  y\in \Y,\; x\in \X\; ,$$
%with $0\leq \tilde{F}(s,y,x)$ and $\tilde{F}(\cdot,\cdot,\cdot)\in C_0(\R^+\times\Y\times\X;\R)$.

\end{defn}

\begin{lem}\label{lemBou}
Let us suppose that the Bi-almost automorphic function $C:\R\times\R\times\X\times\Y\to \W$  is $\lambda$-bounded. 
%such that $||C(t,s,x,y)||_{\W} \leq \lambda(t-s)$ uniformly in $(x,y)$ on bounded subsets of $\X \times \Y$. 
Then, its limit function $\tilde{C}:\R\times\R\times\X\times\Y\to \W$ (see definition \ref{defBaa}) satisfies: 
$||\tilde{C}(t,s,x,y)||_{\W} \leq \lambda(t,s)$.
% its limit function $\tilde{C}: \R\times\R\times\X\times\Y\to \W$ also satisfies $||\tilde{C}(t,s,x,y)||\leq \lambda(t-s)$ uniformly in $(x,y)$ on bounded subsets of $\X \times \Y$. That is, 
%the Bi-almost automorphic function 
%$C$ is $(\lambda,\lambda)$-bounded.
\end{lem}
\begin{proof}
Let $\mathcal{B}$ be a bounded subset of $\X \times \Y$. By the Bi-almost automorphicity of $C$, we have: given an arbitrary sequence of real numbers $\{s_n'\}$ there exist a subsequence $\{s_n\} \subset \{s_n'\}$ and a function $\tilde{C}$ such that the following pointwise limits in $(t,s)$ and uniform at $(x,y) \in  \mathcal{B}$, holds:
$$\tilde C(t,s,x,y):=\lim_{n\to \infty}C(t+s_n,s+s_n,x,y)\, ,$$ %\, \, \, (x,y) \in \mathcal{B}.$$
%is well defined for any $(x,y) \in \mathcal{B}$ and each $(t,s)\in \R\times\R$, and also we have the limit
$$C(t,s,x,y)=\lim_{n\to \infty}\tilde C(t-s_n,s-s_n,x,y)\, .$$ %, \, \, \, (x,y) \in \mathcal{B}\, ,$$ç
%for any $(x,y) \in \mathcal{B}$ and each $(t,s)\in \R\times\R$.
On the other hand, we have
\begin{eqnarray}\label{Eq0.0}
||\tilde{C}(t,s,x,y)|| & \leq & ||\tilde{C}(t,s,x,y)-C(t+s_n,s+s_n,x,y)||+||C(t+s_n,s+s_n,x,y)|| \nonumber \\ 
&\leq & ||\tilde{C}(t,s,x,y)-C(t+s_n,s+s_n,x,y)||+\lambda(t,s).
\end{eqnarray}
Now, taking the limit as $n\to +\infty$ in the inequality (\ref{Eq0.0}), we obtain
$$||\tilde{C}(t,s,x,y)|| \leq \lambda(t,s)\, .$$
\end{proof}

%\begin{exa} In several situations, we have the integral operator
%$$\Gamma y(t):=\int_{-\infty}^t C(t,s)f(s,y(s))ds\, ,$$
%in which the kernel satisfy
%$$||C(t,s)||\leq \mu(t-s)\, ,$$
%where $\mu$ is a Bi-almost automorphic kernel and $\mu \in L^1(\mathbb{R}^+)$\, .
%\end{exa}

%\begin{obs}
%Although we have given a general definition of $(\lambda, \eta)$-boundedness, in what follows (and for the applications) we will assume that $\lambda(t,s):=\lambda(t-s)$. In this case, we will say that $C$ is $\lambda$-bounded. % and $\eta(t,s):=\eta(t-s)$. %, with appropriate conditions.
%\end{obs}

\begin{defn}
We will say that  $F\in C_0^{\theta}(\R\times \R^+ \times \X \times \Y; \W)$, if  there exist a function \newline $\theta:\R \times \R^+ \to \R^+$ such that, 
$$||F(t,s,x,y)|| \leq \theta(t,s)\widehat{F}(s,x,y),\;  t\in\R,\; s \in \mathbb{R}^+,\;  x\in \X, \; y\in \Y ; $$
with  $\widehat{F} \in C_0(\R^+\times\X  \times\Y;\R^+)$.

\end{defn}

\subsection{Conditions}
The following are the basic abstract conditions that we impose in order to study equations (\ref{eq5}), (\ref{eq4}) and (\ref{eqNew}) :\\

\noindent {\bf (H1)} If $y$ is AA, then $y(a_i(\cdot))$ is AA, for $i=0,1,2$.

\noindent {\bf (H2)} If $y$ is AAA, then $y(b_i(\cdot))$ is AAA, for $i=0,1,2$.

\noindent % For $C_1,C_2$ We will assume the following aditional condition
{\bf (H3)} For $i=1,2$; the operators $C_i$ are Bi-almost automorphic in $(t,s)$ uniformly on bounded subsets of $\mathbb{X}\times \mathbb{Y}$ and are $\lambda_i$-bounded. Moreover, 
% with $\lambda_i \in C(\mathbb{R}^2)$. Moreover, 
%the following integrals converge uniformly in the parameter $t\in \mathbb{R}$
%\begin{equation}\label{EqN012}
%\int_{0}^{+\infty}\lambda_1(t,t-s)ds \; ,\; \; \; \; \int_{-\infty}^{0}\lambda_2(t,t-s)ds \; ;
%\end{equation}
%and 
\begin{equation}\label{EqN01}
\sup_{t\in \mathbb{R}}\left( \int_{-\infty}^t\lambda_1(t,s)ds \right) = \alpha_1<+\infty \; ,\; \; \; \; \sup_{t\in \mathbb{R}} \left(\int_{t}^{+\infty}\lambda_2(t,s)ds\right)=\alpha_2<+\infty \; .
\end{equation}

%\lambda_1 \in L^1(\mathbb{R}^+)\, ,\; \;  \lambda_2 \in L^1(\mathbb{R}^-)\, . 
%and 
%\begin{equation}\label{EqN02}
%\lambda_2,\eta_2 \in L^1(\mathbb{R}^-)\, .
%\end{equation}
%\sup_{t\in \mathbb{R}}\left( \int_{-\infty}^t\eta_1(t,s)ds\right)=M_1<+\infty \; ,\; \;\; \; \; \sup_{t\in \mathbb{R}}\left(\int_{t}^{+\infty}\eta_2(t,s)ds\right)=M_2<+\infty\; .
%\noindent (S1)

\noindent {\bf (H4)} For $i=1,2$; let $\tilde{C}_i$ be the limit functions given in definition \ref{defBaa}. We assume that, $C_i$ are $(\mu_i, \tilde{\mu}_i)$-Lipschitz; that is, there exist functions $\mu_i, \tilde{\mu}_i:\mathbb{R}\times\mathbb{R}\rightarrow \mathbb{R}^+$ such that for $(u_1,u_2),(v_1,v_2)\in \mathcal{B}$, with $\mathcal{B}$ a bounded subset of $\mathbb{X}\times \mathbb{Y}$, we have
$$\left \|C_i(t,s,u_1,u_2)-C_i(t,s,v_1,v_2)\right \| \leq \mu_i(t,s)(\left \|u_1-v_1\right \|+\left \|u_2-v_2\right \|)\; ,$$
and 
$$\left \|\tilde{C}_i(t,s,u_1,u_2)-\tilde{C}_i(t,s,v_1,v_2)\right \| \leq \tilde{\mu}_i(t,s)(\left \|u_1-v_1\right \|+\left \|u_2-v_2\right \|)\; .$$
with 
$$\sup_{t\in \mathbb{R}}\left( \int_{-\infty}^t\mu_1(t,s)ds \right)=N_1<+\infty, \; \;\; \; \; \sup_{t\in \mathbb{R}}\left(\int_{t}^{+\infty}\mu_2(t,s)ds\right)=N_2<+\infty\; .$$
%and also
%
%$$\sup_{t\in \mathbb{R}}\left( \int_{-\infty}^t \tilde{\mu}_1(t,s)ds \right)=\tilde{N}_1<+\infty, \; \;\; \; \; \sup_{t\in \mathbb{R}}\left(\int_{t}^{+\infty}\tilde{\mu}_2(t,s)ds\right)=\tilde{N}_2<+\infty\; ,$$
%\begin{obs}
%As in the proof of lemma \ref{lemBou}, it is not difficult to see that if $\mu_i(t,s):=\mu_i(t-s)$, then the Bi-almost automorphic functions $C_i$ are $(\mu_i,\mu_i)$-Lipschitz.
%\end{obs}

\noindent {\bf (H5)} For $i=1,2$, the functions $B_i$ have the decomposition $B_i=B_i^a+B_{i,0}^{\theta_i}$ in which $B_i^a$ are Bi-almost automorphic functions which satisfies condition {\bf (H3)}, 
% in $(t,s)$, uniformly on bounded subsets of $\mathbb{X}\times \mathbb{Y}$ and  are $\lambda_i$-bounded, with 
%$$\lambda_1\in L^1(\R^+)\; ,\; \; \lambda_2\in L^1(\R^-)\, ;$$
and $B_{i,0}^{\theta_i}\in  C_0^{\theta_i}(\R\times \R^+ \times \X \times \Y; \W)$,
% that is 
%$$||B_{i,0}^{\theta_i}(t,s,x,y)||\leq \theta_i(t,s)\widehat{B}_{i,0}(s,x,y)\; ,$$
%where $\widehat{B}_{i,0} \in C_0(\R^+\times\X \times\Y;\R^+),$  
with 
$$\lim_{t\to \infty}\int_0^T \theta_1(t,s)ds=0,\; \; \; \forall \; T > 0\; ,$$
and 
$$\sup_{t\geq 0}\left( \int_{0}^{t}\theta_1(t,s)ds \right)=P_1<+\infty\; ,\; \; \; \; \;  \sup_{t\geq 0}\left( \int_{t}^{+\infty}\theta_2(t,s)ds \right)=P_2<+\infty\; .$$
Also, the Bi-almost automorphic functions $B_i^a$ are $(\nu_i, \tilde{\nu}_i)$-Lipschitz, 
%there exists functions $\nu_i:\mathbb{R}\times\mathbb{R}\rightarrow \mathbb{R}^+$ such that for $(u_1,u_2),(v_1,v_2)\in \mathcal{B}$, with $\mathcal{B}$ a bounded subset of $\mathbb{X}\times \mathbb{Y}$ we have
%$$|B_i^a(t,s,u_1,u_2)-B_i^a(t,s,v_1,v_2)|\leq \nu_i(t,s)(|u_1-v_1|+|u_2-v_2|)\; ,$$
 with %such that 
\begin{equation}\label{Nweq2}
\sup_{t\geq 0}\left( \int_{-\infty}^t \nu_1(t,s)ds \right)=\beta_1<+\infty, \; \;\; \; \; \sup_{t\geq 0}\left(\int_t^{+\infty}\nu_2(t,s)ds \right)=\beta_2<+\infty \; ,
\end{equation}
and for every compact interval $[a,b] \subset \mathbb{R}$, the following limit holds
\begin{equation}\label{Nweq1}
\lim_{t\to \infty} \int_a^b\nu_1(t,s)ds=0\; .
\end{equation}

%and also
%\begin{equation}\label{Nweq2}
%\sup_{t\geq 0}\left( \int_{-\infty}^t \nu_1(t,s)ds \right)=O_1<+\infty, \; \;\; \; \; \sup_{t\geq 0}\left(\int_t^{+\infty}\nu_2(t,s)ds \right)=O_2<+\infty \; .
%\end{equation}
% 
%\begin{equation}\label{Nweq2}
%\sup_{t\geq 0}\left( \int_{-\infty}^t \tilde{\nu}_1(t,s)ds \right)=\tilde{O}_1<+\infty, \; \;\; \; \; \sup_{t\geq 0}\left(\int_t^{+\infty} \tilde{ \nu}_2(t,s)ds \right)=\tilde{O}_2<+\infty \; .
%\end{equation} 
%\noindent {\bf (C4)} %$B\in C_0^{\theta,o}(\R \times\R \times \X\times \X;\X)$, i.e $|B(t,s,u,v)|\leq \theta(t,s)|\tilde{B}(s,u,v)|$, $\tilde{B}(s,u,v)\in (\R^+%\times \X\times \X;\X)$ and 
%Given $\epsilon>0$, there exist $M>0$ such that 
%$$\int_0^M \mu(t,s)ds<\epsilon \, \, (t\geq M); \, \, and \,  \, \int_M^t \mu(t,s)ds\leq \beta.$$
 
%\noindent {\bf (H5)} 
\noindent Finally, we also assume that there exists a function $\vartheta : \R \times \R \to \R^+$ such that  $|B_1^a(t,s,0,0)|\leq \vartheta(t,s)$, and 
$$\lim_{t\to \infty}\int_{-\infty}^0\vartheta(t,s)ds=0\; .$$

%\noindent 
Note that the integral equations (\ref{eq5})-(\ref{eq4}), has the particular and especial case $C_i(t,s,u,v):= \Theta_i(t,s)f_i(s,u,v)$ (which in particular encodes the convolution situation $\Theta_i(t,s):=\Theta_i(t-s)$). In this case,  we impose the following condition

\noindent {\bf (E)} For $i=1,2$:

\begin{enumerate}
\item[(a)] The functions  $f_i: \R \times \X \times\Y \to \W$ are almost automorphic in $s$, uniformly on bounded subsets of $\mathbb{X}\times \mathbb{Y}$ and there exists constants $L_i=L(f_i)$, such that for all $s\in \mathbb{R}$ and for all  $(u_1,u_2),(v_1,v_2)\in \mathcal{B}$, with $\mathcal{B}$ a bounded subset of $\mathbb{X}\times \mathbb{Y}$ we have
$$\Big{|}\Big{|} f_i(s,u_1,u_2) - f_i(s,v_1,v_2)\Big{|}\Big{|}\leq L_i(||u_1-v_1||+||u_2-v_2||)\; .$$
\item[(b)] $\Theta_i:\R \times \R\to BC(\W,\W)$  are Bi-almost automophic kernels, uniformly on bounded subsets of $\W$; and  also they are $\lambda_i$-bounded with
\begin{equation}\label{EeQq01}
\sup_{t\in \mathbb{R}}\left( \int_{-\infty}^t\lambda_1(t,s)ds \right) <+\infty \; ,\; \; \; \; \sup_{t\in \mathbb{R}} \left(\int_{t}^{+\infty}\lambda_2(t,s)ds\right) <+\infty \; .
\end{equation}

% $\lambda_1 \in L^1(\mathbb{R}^+)$ and $\lambda_2 \in L^1(\mathbb{R}^-)$ . %, and the functions $\lambda_i,\eta_i$ 
%are $(\lambda_i,\eta_i)$-almost automorphic, in which $\lambda_i,\eta_i:\mathbb{R}\times \mathbb{R}\to \mathbb{R}^+$ 
%satisfy the conditions (\ref{EqN01})-(\ref{EqN02}).% integral conditions%(\ref{EqN01})-(\ref{EqN02}) and

%$$\sup_{t\in \R}\int_{-\infty}^t| \Theta_1(t,s)|ds =\xi_1 <+\infty\; ,$$
%and 
%. \; \; \; \; \; \sup_{t\in \R}\int_{-\infty}^t| \tilde{\Theta}_1(t,s)|ds =\kappa_1 <\infty \; ,$$
%$$\sup_{t\in \R}\int_t^{+\infty}| \Theta_2(t,s)|ds =\xi_2 <+\infty\; . $$
%\; \; \; \; \; \sup_{t\in \R}\int_{-\infty}^t| \tilde{\Theta}_2(t,s)|ds =\kappa_2 <\infty\; . $$
%$$ \sup_{t\in \R}\int_t^{+\infty} |C_2(t,s)|ds =\xi_2 <\infty, \ \sup_{t\in \R}\int_t^{+\infty} |\tilde C_2(t,s)|ds =\kappa_2 <\infty$$
\end{enumerate} 

Sometimes, the Lipschitz's constant $L_i$ given in condition {\bf (E)}-(a) is a function of the variable $s$ or a function of the radius $r$ of a closed ball in a Banach space (c.f. next section). 
% Also, note that, when $\Theta_i(t,s):=\Theta_i(t-s)$, then the two conditions (\ref{EqN01})-(\ref{EqN02}) (refered in in {\bf (E)}-(b)) are 
%$$\Theta_1\in L^1(\R^+)\;  \;   \; { \rm and } \;  \;   \;  \Theta_2\in L^1(\R^-)\, ,$$
%respectively.\\
%
%
%
%\noindent 
 Obviously, it is possible to give conditions for the especial case $B_i(t,s,u,v)=\Upsilon_i(t,s)\tilde{B}_i(s,u,v)$, from where condition {\bf (H4)} can be deduced, we omit the details.

%
%\noindent {\bf (F)} For $i=1,2$:
%\begin{enumerate}
%\item The functions  $\widehat{C}_i(s,u,v)$ are almost automorphic in $s$, uniformly in compact subsets of $\mathbb{X}\times \mathbb{Y}$ and there exists constants $L_i=L(\widehat{C}_i)$, such that for all $s\in \mathbb{R}$ and for all  $(u_1,u_2),(v_1,v_2)\in K$, with $K$ a compact subset of $\mathbb{X}\times \mathbb{Y}$ we have
%$$\Big{|} \widehat{C}_i(s,u_1,u_2) - \widehat{C}_i(s,v_1,v_2)\Big{|}\leq L_i(|u_1-v_1|+|u_2-v_2|)\; .$$
%
%\item $\Theta_i(t,s)$ are $(\lambda_i,\eta_i)$-almost automorphic, in which $\lambda_i,\eta_i:\mathbb{R}\times \mathbb{R}\to \mathbb{R}^+$ satisfy (\ref{EqN01})-(\ref{EqN02}) and
%
%$$\sup_{t\in \R}\int_{-\infty}^t| \Theta_1(t,s)|ds =\xi_1 <\infty,\; \; \; \; \; \; \sup_{t\in \R}\int_{-\infty}^t| \tilde{\Theta}_1(t,s)|ds =\kappa_1 <\infty \; ,$$
%$$\sup_{t\in \R}\int_{-\infty}^t| \Theta_2(t,s)|ds =\xi_2 <\infty,\; \; \; \; \; \; \sup_{t\in \R}\int_{-\infty}^t| \tilde{\Theta}_2(t,s)|ds =\kappa_2 <\infty\; . $$
%\end{enumerate}

We mention that, the convolution situation of the equations treated here, i.e. the cases \newline
 $C_i(t,s,u,v) := \Theta_i(t-s)\widehat{C}_i(s,u,v)$ and $B_i(t,s,u,v)=\Upsilon_i(t-s)\tilde{B}_i(s,u,v) $ of the integral equations and the integro-differential equation (respectively), have been studied by the first author in  his Master thesis \cite{MScTh}. In this situation, the $\lambda_i$ functions are of the form $\lambda_i(t,s)=\lambda_i(t-s)$; and conditions in (\ref{EeQq01}) becomes: $\lambda_1 \in L^1(\mathbb{R}^+)$ and $\lambda_2 \in L^1(\mathbb{R}^-)$ (respectively).
%\noindent {\bf (S1)} 

%\noindent (S2) 
%\noindent {\bf (S2)} 
%
%
%

%
%Before to continue with our investigation, let us prove the following lemma
%
%\begin{lem}\label{AuxLeb}
%Let $f:\mathbb{R}\times \mathcal{I} \to \mathbb{R}$ with $\mathcal{I}=\mathbb{R}^+$ or $\mathcal{I}=\mathbb{R}^-$ be a function such that $s\to f(t,s)$ is measurable for each $t$. %and  $t\to f(t,s)$ is continuous for each $s$.
% Also, suppose that $f(t,s)\to \phi(s)$ if $t \to t_0$ and $|f(t,s)|\leq h(t,s)$ with $h\in C(\mathbb{R}^2)$. Moreover, the following integral converge uniformly in the parameter $t\in \mathbb{R}$
%$$\int_{\mathcal{I}}h(t,s)ds\; .$$
%Then, 
%$$\lim_{t \to t_0}	\int_{\mathcal{I}}f(t,s)ds=\int_{\mathcal{I}}\lim_{t\to t_0}f(t,s)ds=\int_{\mathcal{I}}\phi(s)ds\; .$$
%\end{lem}
%\begin{proof}
%It follows from \cite[Prop. 4, Sec. 17.2.2]{ZorB} that the following limit holds
%$$\lim_{t \to t_0}\int_{\mathcal{I}}h(t,s)ds=\int_{\mathcal{I}}h(t_0,s)ds\; .$$
%Now, the result follows as in the standard proof of the Lebesgue's Dominated Convergence theorem.
%\end{proof}

\subsection{On the invariance of $AA(\mathbb{R};\mathbb{X})$ and $AAA(\mathbb{R^+};\mathbb{X})$ under some integral operators}
In order to study the equations of our interest, %(\ref{eq5}), (\ref{eq4}) and (\ref{eqX})-(\ref{eqXX}), 
we need to develop some abstract lemmas in order to ensure that the spaces of almost automorphic and asymptotically almost automorphic functions are invariant under some integral operators, in this subsection we develop them.% of operators involving in these equations,In this subsection, we 
\begin{defn}
Let $K$ be a compact subset of $\X$ and $T\subseteq \R$. A function $f$ is in the class $\mathcal{C}_K(T\times \X;\X)$ if  it 
% $f: T\times \X\to \X$ 
satisfies: $f(t,\cdot)$ is uniformly continuous on $K$, uniformly for  $t\in T$.
\end{defn}
%In this subsection we present some results concerning to almost automorphic and asymptotically almost automorphic functions.  
The proof of the following composition lemma is the same as \cite[Lemma 2.2]{16}.

%\begin{lem}\label{LEM001}Let $g=f+\phi \in AAA(\R;\X)$ where $f\in AA(\R;\X), \phi \in C_0(\R^+;\X)$, then \\
%$\overline{\{f(t):t\in\R\}}\subset \overline{\{g(t):t\in\R^+\}}$.
%\end{lem}

\begin{lem}\label{l22} Let $x,y \in AAA(\R^+;\X), K=\overline{\{y(t):t\in\R\}}\times \overline{\{x(t):t\in\R\}}$ and $g\in AAA(\R^+\times \X\times \X;\X) \cap
\mathcal{C}_K(\R^+\times\X\times\X;\X)$, then $g(\cdot,x(\cdot),y(\cdot))\in AAA(\R^+;\X)$.
\end{lem}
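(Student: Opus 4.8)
The plan is to follow the standard template for composition results in the asymptotically almost automorphic setting: decompose every object into its almost automorphic and ergodic parts and treat the two separately. Using Theorem \ref{TeoAAA} (and the analogous decomposition for the arguments), I would write $x=x_1+x_2$ and $y=y_1+y_2$ with $x_1,y_1\in AA(\R;\X)$ and $x_2,y_2\in C_0(\R^+;\X)$, and $g=f+\phi$ with $f\in AA(\R\times\X\times\X;\X)$ and $\phi\in C_0(\R^+\times\X\times\X;\X)$. The candidate decomposition of the composition is
$$ g(t,x(t),y(t)) = f(t,x_1(t),y_1(t)) + \big[\, g(t,x(t),y(t)) - f(t,x_1(t),y_1(t)) \,\big], $$
and the goal is to show that the first summand is the almost automorphic component and the bracketed remainder lies in $C_0(\R^+;\X)$. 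Since the ranges of $x_1,y_1$ are relatively compact and contained in the compact factors defining $K$, the uniform continuity furnished by $g\in\mathcal{C}_K$ is available along the composed trajectory.

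For the almost automorphic part, the core step is $f(\cdot,x_1(\cdot),y_1(\cdot))\in AA(\R;\X)$. Given a sequence $\{s_n'\}$, I would extract a common subsequence $\{s_n\}$ along which $f(t+s_n,x,y)\to\tilde f(t,x,y)$ (uniformly for $(x,y)$ in $K$) and $x_1(t+s_n)\to\tilde x_1(t)$, $y_1(t+s_n)\to\tilde y_1(t)$ pointwise, and then estimate
$$ \big\| f(t+s_n,x_1(t+s_n),y_1(t+s_n)) - \tilde f(t,\tilde x_1(t),\tilde y_1(t)) \big\| \le A_n + B_n, $$
where $A_n=\| f(t+s_n,x_1(t+s_n),y_1(t+s_n)) - f(t+s_n,\tilde x_1(t),\tilde y_1(t)) \|$ is driven to $0$ by the uniform continuity of $f(t+s_n,\cdot)$ on $K$ together with $x_1(t+s_n)\to\tilde x_1(t)$, $y_1(t+s_n)\to\tilde y_1(t)$, and $B_n=\| f(t+s_n,\tilde x_1(t),\tilde y_1(t)) - \tilde f(t,\tilde x_1(t),\tilde y_1(t)) \|\to 0$ by almost automorphy of $f$. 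The reverse limit of Definition \ref{def1} is obtained symmetrically.

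For the ergodic part, I would split the remainder as
$$ g(t,x(t),y(t)) - f(t,x_1(t),y_1(t)) = \phi(t,x(t),y(t)) + \big[\, f(t,x(t),y(t)) - f(t,x_1(t),y_1(t)) \,\big]. $$
Here $\phi(t,x(t),y(t))\to 0$ as $t\to+\infty$ because $\phi\in C_0$ uniformly on bounded subsets and $(x(t),y(t))$ remains in the bounded set $K$; the bracket tends to $0$ because $x(t)-x_1(t)=x_2(t)\to 0$ and $y(t)-y_1(t)=y_2(t)\to 0$, combined once more with the uniform continuity of $f$ on $K$. Thus the remainder belongs to $C_0(\R^+;\X)$, completing the decomposition.

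The main obstacle — and essentially the only delicate point — is the composition step for the almost automorphic component: the limits in Definition \ref{def1} hold only pointwise in $t$, so the interchange of the pointwise convergences $x_1(t+s_n)\to\tilde x_1(t)$, $y_1(t+s_n)\to\tilde y_1(t)$ with the convergence of $f$ is precisely what the $\mathcal{C}_K$ hypothesis is designed to license through the estimate for $A_n$. A preliminary technical verification I would carry out is that the almost automorphic component $f$ itself inherits uniform continuity on $K$ from $g\in\mathcal{C}_K$ (via $g=f+\phi$ and $\phi\to 0$ at infinity), since it is $f$, and not $g$, that appears in the terms $A_n$ and in the ergodic bracket.
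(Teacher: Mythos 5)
Your proposal is correct and follows essentially the same route as the paper, which gives no proof of its own but cites \cite[Lemma 2.2]{16}: that cited proof is exactly this decomposition into almost automorphic and ergodic parts, with the composition step for the almost automorphic component handled by the uniform continuity on the compact set $K$ and the remainder shown to vanish at infinity. The two technical facts you flag (that the ranges of $x_1,y_1$ lie in the compact factors of $K$, and that the principal term $f$ inherits the uniform continuity of $g$ on $K$) are precisely the standard auxiliary observations used in that reference, so nothing essential is missing.
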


Now, let us present our first result about integral operators that leaves invariant the almost automorphic function space.% we start with the following lemma\\

\begin{lem}\label{lem12}
%Let $f\in BLips(BC(\R\times \X;\X))\cap AA(\R\times\X;\X)$ and $C$ a $L^1(\R^+)$-Bi-almost automorphic function which satisfies condition $S1$, then the operator:
Suppose that condition {\bf (H1)} holds and let $C_1: \R\times \R\times \X\times \X \to \X$ be a Bi-almost automorphic function that satisfies conditions 
%and $\lambda$-bounded, with $\lambda \in L^1(\R^+)$; moreover %\in AA_{\lambda_1,\eta_1}(\mathbb{R}\times \mathbb{R}\times \mathbb{X}\times \mathbb{X}; \mathbb{X})$ 
% $C_1$ satisfies condition 
{\bf (H3)} and {\bf (H4)}. Then the integral operator $\Gamma $, such that:
%Let $C\; :\; \mathbb{R}\times \mathbb{R}\times \mathbb{X}\times \mathbb{Y}\to \mathbb{Z}$ be a $(\lambda, \eta)-$almost automorphic function. 
$$\Gamma y(t)=\int_{-\infty}^{t}C_1(t,s,y(s),y(a_1(s)))ds\; ,$$
maps $AA(\R,\mathbb{X})$ into $AA(\R,\mathbb{X})$.
\end{lem}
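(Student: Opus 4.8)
The plan is to first check that $\Gamma y$ lands in $BC(\R;\X)$ and then to verify directly the two pointwise limits of Definition \ref{def1}. For the boundedness, write $w:=y(a_1(\cdot))$, which is almost automorphic by {\bf (H1)}; since both $y$ and $w$ have relatively compact, hence bounded, range, there is a bounded set $\mathcal B=\overline{B}(0,R)\times \overline{B}(0,R)\subset\X\times\X$ containing the range of $(y(\cdot),w(\cdot))$. The $\lambda_1$-boundedness in {\bf (H3)} then gives $\|C_1(t,s,y(s),w(s))\|\le \lambda_1(t,s)$, so that $\|\Gamma y(t)\|\le \int_{-\infty}^t\lambda_1(t,s)\,ds\le \alpha_1<\infty$. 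Continuity of $\Gamma y$ follows from the joint continuity of $C_1$ together with the integrable majorant $\lambda_1(t,\cdot)$ and dominated convergence; I would only sketch this routine point.

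For the automorphy, fix an arbitrary sequence $\{s_n'\}$. By successive extraction I would produce a single subsequence $\{s_n\}$ along which: (i) $C_1(t+s_n,s+s_n,x,x')\to \tilde C_1(t,s,x,x')$ uniformly for $(x,x')\in\mathcal B$ and pointwise in $(t,s)$ (Bi-almost automorphy, {\bf (H3)}); (ii) $y(\cdot+s_n)\to\tilde y$ and $\tilde y(\cdot-s_n)\to y$ pointwise; and (iii) $w(\cdot+s_n)\to\tilde w$ and $\tilde w(\cdot-s_n)\to w$ pointwise. The candidate limit is
$$\widetilde{\Gamma y}(t):=\int_{-\infty}^t \tilde C_1(t,s,\tilde y(s),\tilde w(s))\,ds,$$
which is finite because $\|\tilde C_1(t,s,\cdot,\cdot)\|\le\lambda_1(t,s)$ by Lemma \ref{lemBou}. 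After the substitution $s=\sigma+s_n$ one has $\Gamma y(t+s_n)=\int_{-\infty}^t C_1(t+s_n,\sigma+s_n,y(\sigma+s_n),w(\sigma+s_n))\,d\sigma$, and the task reduces to passing the limit inside this integral.

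The heart of the argument is the pointwise convergence, for each fixed $\sigma$, of the integrand to $\tilde C_1(t,\sigma,\tilde y(\sigma),\tilde w(\sigma))$, and this is the step I expect to be the main obstacle. The difficulty is that almost automorphic convergence is only \emph{pointwise}, so one cannot control the \emph{translated} Lipschitz density $\mu_1(t+s_n,\sigma+s_n)$ of $C_1$ uniformly in $n$. The way around this is to insert the limit kernel with the \emph{same} arguments and split the integrand difference as $J_n(\sigma)+K_n(\sigma)$, where $J_n(\sigma)=C_1(t+s_n,\sigma+s_n,y(\sigma+s_n),w(\sigma+s_n))-\tilde C_1(t,\sigma,y(\sigma+s_n),w(\sigma+s_n))$ and $K_n(\sigma)=\tilde C_1(t,\sigma,y(\sigma+s_n),w(\sigma+s_n))-\tilde C_1(t,\sigma,\tilde y(\sigma),\tilde w(\sigma))$. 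The term $J_n(\sigma)\to 0$ because the convergence in (i) is \emph{uniform} over the bounded set $\mathcal B$, which contains every pair $(y(\sigma+s_n),w(\sigma+s_n))$; the term $K_n(\sigma)\to 0$ because $\tilde C_1$ is $\tilde\mu_1$-Lipschitz by {\bf (H4)} with a density $\tilde\mu_1(t,\sigma)$ \emph{independent of} $n$, multiplied by $\|y(\sigma+s_n)-\tilde y(\sigma)\|+\|w(\sigma+s_n)-\tilde w(\sigma)\|\to 0$ from (ii)--(iii). Thus routing the Lipschitz estimate through the limit kernel $\tilde C_1$, rather than through $C_1$, is exactly what neutralises the pointwise-only nature of the convergence.

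Once pointwise convergence of the integrand is in hand, dominated convergence closes the argument: the integrand is majorised by $\lambda_1(t,\cdot)$, which is integrable on $(-\infty,t]$ by {\bf (H3)}, whence $\Gamma y(t+s_n)\to\widetilde{\Gamma y}(t)$ for each $t$. The reverse limit $\widetilde{\Gamma y}(t-s_n)\to \Gamma y(t)$ is obtained symmetrically: after the substitution $s=\sigma-s_n$ the integrand $\tilde C_1(t-s_n,\sigma-s_n,\tilde y(\sigma-s_n),\tilde w(\sigma-s_n))$ converges pointwise to $C_1(t,\sigma,y(\sigma),w(\sigma))$ by the same two-step split, now using the reverse uniform limit $\tilde C_1(t-s_n,s-s_n,\cdot)\to C_1(t,s,\cdot)$ from {\bf (H3)} together with the $\mu_1$-Lipschitz continuity of $C_1$ from {\bf (H4)}, while the majorant is again $\lambda_1(t,\cdot)$ because $\tilde C_1$ inherits the translated bound $\|\tilde C_1(t-s_n,\sigma-s_n,\cdot)\|\le\lambda_1(t,\sigma)$ (obtained exactly as in Lemma \ref{lemBou}). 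Combining the two limits shows $\Gamma y\in AA(\R;\X)$.
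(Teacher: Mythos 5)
Your proof is correct and follows essentially the same route as the paper's: the same change of variables, the same splitting of the integrand through the limit kernel $\tilde C_1$ evaluated at the translated arguments (the paper's $\mho_n$ and $\mathcal{P}_n$), the uniform Bi-almost automorphic convergence on the compact range set for one piece, the $\tilde\mu_1$-Lipschitz bound from \textbf{(H4)} for the other, and dominated convergence with majorant $2\lambda_1(t,\cdot)$ via Lemma \ref{lemBou}. If anything, you are slightly more explicit than the paper on the reverse limit, where you justify the translated bound $\|\tilde C_1(t-s_n,\sigma-s_n,\cdot)\|\leq\lambda_1(t,\sigma)$ needed for the domination, a point the paper passes over with ``with a similar process.''
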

\begin{proof}
Let $y\in AA(\mathbb{R};\mathbb{X})$ and define $x(t)=y(a_1(t))$; then, by hypothesis $x\in AA(\mathbb{R}; \mathbb{X})$. Let us take the bounded (and compact) set $\mathcal{B}=\overline{\{y(s):s\in\R\}}\times \overline{\{x(s):s\in\R\}}$. Since $C_1$ is jointly continuous and satisfies condition {\bf (H3)}, %and $\lambda_1$-bounded in which $\lambda_1$
%$$\sup_{t\in\R} \left( \int_{-\infty}^t\lambda_1(t,s)ds \right)=\alpha_1 <+\infty \; ,$$
then the function
$$\Gamma y(t)=\int_{-\infty}^{t}C_1(t,s,y(s),y(a_1(s)))ds=\int_0^{+\infty}C_1(t,t-s,y(t-s),y(a_1(t-s)))ds\; ,$$
is a continuous function of $t$.  In fact, let $t_0$ be any real number;  because of the $\lambda_1$-boundedness of $C_1$, we have %if $y\in AA(\mathbb{R};\X)$. 
%\begin{eqnarray*}
%$$|(\Gamma y)(t_0+h)-(\Gamma y)(t_0)|\leq $$%\\
$$\int_0^{+\infty} \Big{|}\Big{|}C_1(t_0+h,t_0+h-s,y(t_0+h-s),x(t_0+h-s))-C_1(t_0,t_0-s,y(t_0-s),x(t_0-s))\Big{|}\Big{|}ds\leq $$
$$\leq 2 \int_0^{+\infty}\lambda_1(t_0,t_0-s)ds\; .$$
%\end{eqnarray*}
Therefore using (\ref{EqN01}) of the condition {\bf (H3)} and the Lebesgue's dominated convergence theorem we have
$$\lim_{h\to 0} ||\Gamma y (t_0+h)-\Gamma y (t_0)||=0\, ,$$
which implies that $\Gamma y$ is continuous at $t_0$.

Now let $\{s_n^{'}\}$ be an arbitrary sequence of real numbers, then there exist a subsequence $\{s_n\}\subset \{s_n^{'}\}$ and functions $\tilde{f},\tilde{y}$ and $ \tilde{x}$, such that the following pointwise limits holds
$$\lim_{n\to \infty} y(t+s_n)=\tilde{y}(t)\; ,\; \lim_{n\to \infty} \tilde{y}(t-s_n)=y(t)\; ,$$
$$\lim_{n\to \infty} x(t+s_n)=\tilde{x}(t)\; ,\; \lim_{n\to \infty }\tilde{x}(t-s_n)=x(t) \; .$$
Also, given 
%$\epsilon >0$ there exists $n_0=n_0(t,s,\epsilon)\in \mathbb{N}$ such that for every $n>n_0$ and 
any bounded subset $\mathcal{B}'\subset \mathbb{X}\times\mathbb{X}$ we have the following pointwise limits in $(t,s)$ and uniformly for $(x,y)\in \mathcal{B}'$ %inequalities
$$\lim_{n\to \infty}C_1(t+s_n,s+s_n,x,y)=\tilde C_1(t,s,x,y)\; ,$$
$$\lim_{n\to \infty}\tilde C_1(t-s_n,s-s_n,x,y)=C_1(t,s,x,y)\; .$$

\noindent  Let us take the bounded set $\mathcal{B}' =\mathcal{B}=\overline{\{y(s):s\in\R\}}\times \overline{\{x(s):s\in\R\}}$, and define 
$$\mho_n(t,s):=C_1(t+s_n,s+s_n,y(s+s_n),x(s+s_n))-\tilde C_1(t,s,\tilde y(s),\tilde x(s))\; .$$
Then, because of condition {\bf (H4)} we have% using the previous limits, we have

\begin{eqnarray*} 
||\mho_n(t,s)|| &\leq & ||C_1(t+s_n,s+s_n,y(s+s_n),x(s+s_n))- \tilde{C}_1(t,s,y(s+s_n), x(s+s_n))||\\
& +& ||\tilde{C}_1(t,s,y(s+s_n), x(s+s_n))-\tilde C_1(t,s,\tilde y(s),\tilde x(s))|| \\
&\leq &  ||C_1(t+s_n,s+s_n,y(s+s_n),x(s+s_n))- \tilde{C}_1(t,s,y(s+s_n), x(s+s_n))||\\
&+&\tilde{\mu}_1 (t,s)\Big{(}||y(s+s_n)-\tilde{y}(s)|| + ||x(s+s_n)-\tilde{x}(s)||\Big{)}\; .
%&\leq & \Big{(} 1+2 \tilde{\mu}_1 (t,s) \Big{)}\epsilon \; ,
\end{eqnarray*}
From this inequality and in the light of the previous limits, we are able to conclude the following pointwise limit:
$$\lim_{n\to +\infty}\mho_n(t,s)=0\; .$$
%\begin{eqnarray*}
%||\mho_n(t,s)|| &\leq & ||C_1(t+s_n,s+s_n,y(t+s_n),x(t+s_n))- C_1(t+s_n,s+s_n,\tilde y(t),\tilde x(t))||+\\
%& +& ||C_1(t+s_n,s+s_n,\tilde y(t),\tilde x(t))-\tilde C_1(t,s,\tilde y(t),\tilde x(t))|| \\
%&\leq &  \lambda_1(t,s)\epsilon+\mu_1 (t+s_n,s+s_n)\Big{(}||y(t+s_n)+\tilde{y}(t)|| + ||x(t+s_n)+\tilde{x}(t)||\Big{)}\\
%&\leq & \Big{(} \lambda_1(t,s)+2 \mu_1 (t+s_n,s+s_n) \Big{)}\epsilon \; .
%\end{eqnarray*}
On the other hand, defining 
$$\mathcal{P}_n(t,s):=\tilde C_1(t-s_n,s-s_n,\tilde y(s-s_n),\tilde x(s-s_n))-C_1(t,s,y(s), x(s))\; ,$$
we obtain the following pointwise limit
%
%\begin{eqnarray*}
%||\mathcal{P}_n(t,s)||&\leq &  ||\tilde C_1(t-s_n,s-s_n,\tilde y(s-s_n),\tilde x(s-s_n))- C_1(t,s,\tilde y(s-s_n),\tilde x(s-s_n))||\\
%&+&|| C_1(t,s,\tilde y(s-s_n),\tilde x(s-s_n))- C_1(t,s,y(s),x(s))||\\
%&\leq &||\tilde C_1(t-s_n,s-s_n,\tilde y(s-s_n),\tilde x(s-s_n))- C_1(t,s,\tilde y(s-s_n),\tilde x(s-s_n))||\\
%%&+& \mu_1(t,s)\Big{(} ||\tilde y(t-s_n)-y(t)||+||\tilde x(t-s_n)-x(t)|| \Big{)}\\
%&+ & \mu_1 (t,s)\Big{(}||\tilde{y}(s-s_n)-\tilde{y}(s)|| + ||\tilde{x}(s-s_n)-\tilde{x}(s)||\Big{)}\; .
%%\Big{(} 1+ 2\mu_1(t,s)\Big{)} \epsilon\; , 
%\end{eqnarray*}
%which also means that the following pointwise limit holds
$$\lim_{n\to +\infty}\mathcal{P}_n(t,s)=0\; .$$
%
%  ||\tilde C_1(t-s_n,s-s_n,\tilde y(t-s_n),\tilde x(t-s_n))&-& C_1(t,s,\tilde y(t-s_n),\tilde x(t-s_n))||+\\
% ||C(t,s,\tilde y(t-s_n),\tilde x(t-s_n))- C(t,s,y(t), x(t))||\; 
%& \leq & \left( \eta_1(t,s)+ \mu_1 (t,s) \right) \epsilon \;.
Now, let us define the new function $\tilde{\Gamma} y$, by
$$\tilde \Gamma y(t):=\int_{-\infty}^{t} \tilde C_1(t,s,\tilde y(s),\tilde x(s))ds\; .$$
Note that by lemma \ref{lemBou} and {\bf (H3)} we conclude
\begin{eqnarray*}
||\tilde \Gamma y(t)||& \leq &\int_{-\infty}^{t} || \tilde C_1(t,s,\tilde y(s),\tilde x(s))||ds\\
&\leq &\int_{-\infty}^{t}\lambda_1(t,s)ds\\
 &\leq& \sup_{t \in \mathbb{R}}\int_{-\infty}^{t}\lambda_1(t,s)ds=\alpha_1<+\infty \; ,%||\lambda||_{L^1(\R^+)}<+\infty \,,
\end{eqnarray*}
that is, $\tilde{\Gamma}y(t)$ is a well defined function.\\

From the $\lambda_1$-boundedness of $C_1$ and lemma \ref{lemBou}, we have that for every $n\in\N$ :
$$\left\| C_1(t+s_n,s+s_n,y(s+s_n),x(s+s_n))-\tilde C_1(t,s,\tilde y(s),\tilde x(s))\right\|\leq 2 \lambda(t,s)\; .$$

Therefore, using the Lebesgue's dominated convergence theorem we conclude the following pointwise limit in $t$
\begin{eqnarray*}
\lim_{n\to +\infty} \left \| \Gamma y(t+s_n)-\tilde \Gamma y(t)\right\| =0\; .
%&\leq & \int_{-\infty}^{t} \left\| C_1(t+s_n,s+s_n,y(s+s_n),x(s+s_n))-\tilde C_1(t,s,\tilde y(s),\tilde x(s))\right\| ds \\
%&\leq &
%  | C(t+s_n,s+s_n,y(t+s_n),x(t+s_n))&-&\tilde C(t,s,y(t+s_n),x(t+s_n))|+\\
% |\tilde C(t,s,y(t+s_n),x(t+s_n))-\tilde C(t,s,\tilde y(t),\tilde x(t))|\; 
%& \leq & \left( \int_{-\infty}^t  \lambda_1(t,s)ds+ 2\int_{-\infty}^{t} \tilde{\mu}_1 (t,s)ds \right) \epsilon \\
%& \leq & (L_1+2\tilde{N}_1)\epsilon \; ,
%& \leq & \epsilon \; ,
\end{eqnarray*}
That is, 
$$\lim_{n\to \infty}\Gamma y(t+s_n)=\tilde \Gamma y(t)\; .$$
With a similar process we obtain :
\begin{eqnarray*}
\lim_{n\to \infty}\tilde \Gamma y(t-s_n)=\Gamma y(t)\; .
%\left \|\tilde \Gamma y(t-s_n)-\Gamma y(t)\right\| &\leq & \int_{-\infty}^{t}\left\| \tilde C_1(t-s_n,s-s_n,\tilde y(s-s_n),\tilde x(s-s_n))-C_1(t,s,y(s), x(s))\right\| ds \\
%%&\leq & |\tilde C(t-s_n,s-s_n,\tilde y(t-s_n),\tilde x(t-s_n))&-& C(t,s,\tilde y(t-s_n),\tilde x(t-s_n))|+\\
%% |C(t,s,\tilde y(t-s_n),\tilde x(t-s_n))- C(t,s,y(t), x(t))|\; 
%& \leq & \left( \int_{-\infty}^{t}\eta_1(t,s)ds+ 2\int_{-\infty}^{t}\mu_1 (t,s)ds \right) \epsilon \\
%&\leq & (M_1+2N_1)\epsilon \; .
\end{eqnarray*}
This completes the proof.
%AHORA PEDIR QUE $ \lambda(t,s)+ \mu (t,s)\leq M$
\end{proof}

Analogously, the following lemma holds:
\begin{lem}\label{lem13}
Suppose that condition {\bf (H1)} holds and let $C_2: \R\times \R\times \X\times \X \to \X$ be a Bi-almost automorphic function %and $\eta$-bounded, with $\eta \in L^1(\R^-)$ and 
that satisfies condition {\bf (H3)} and {\bf (H4)}.
%Let $C_2$ be a Bi-almost automorphic function and %\in AA_{\lambda_1,\eta_1}(\mathbb{R}\times \mathbb{R}\times \mathbb{X}\times \mathbb{X}; \mathbb{X})$ 
%and that  conditions {\bf (AA)}, {\bf (C)} and {\bf (C1)} holds. 
Then the integral operator $\Gamma $, defined by
% Let $C_2 \in AA_{\lambda_2,\eta_2}(\R\times \R\times\X\times \X;\mathbb{W})$ and conditions {\bf (AA)}, {\bf (C)} and {\bf %(C1)} holds. Then the operator
 % Lipschitz in the second variable and $C$ is a $L^1(\R^-)$-Bi-almost automorphic function and satisfies condition $S2$. Then the operator:
$$\Gamma y (t)=\int_{t}^{+\infty}C_2(t,s,y(s),y(a_2(s)))ds\; ,$$
maps $ AA(\R;\X)$ into $ AA(\R;\X)$.
\end{lem}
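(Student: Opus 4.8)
The plan is to run the proof of Lemma \ref{lem12} essentially verbatim, replacing the integration over $(-\infty,t]$ by integration over $[t,+\infty)$ throughout and invoking the second (right-tail) parts of conditions {\bf (H3)} and {\bf (H4)} wherever the first parts were used. First I would take $y\in AA(\R;\X)$, set $x(t):=y(a_2(t))$, which lies in $AA(\R;\X)$ by {\bf (H1)}, and fix the compact set $\mathcal{B}=\overline{\{y(s):s\in\R\}}\times\overline{\{x(s):s\in\R\}}$. The crucial preliminary step is the \emph{forward} change of variables $s=t+r$, which rewrites
$$\Gamma y(t)=\int_0^{+\infty}C_2(t,t+r,y(t+r),x(t+r))\,dr\;.$$

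Continuity of $\Gamma y$ at an arbitrary $t_0$ I would establish exactly as before: writing $C_2(t_0+h,t_0+h+r,\cdot)=C_2\big(t_0+h,(t_0+r)+h,\cdot\big)$ and applying the $\lambda_2$-boundedness with shift $\tau=h$ gives the $h$-independent estimate $\|C_2(t_0+h,t_0+h+r,\cdot)\|\leq\lambda_2(t_0,t_0+r)$, so the integrand bounding $\|\Gamma y(t_0+h)-\Gamma y(t_0)\|$ is dominated by $2\lambda_2(t_0,t_0+r)$, whose integral equals $2\int_{t_0}^{+\infty}\lambda_2(t_0,s)\,ds\leq 2\alpha_2<+\infty$ by {\bf (H3)}. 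Lebesgue's dominated convergence theorem then yields $\lim_{h\to 0}\|\Gamma y(t_0+h)-\Gamma y(t_0)\|=0$.

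For the almost automorphy, I would extract from any sequence $\{s_n'\}$ a common subsequence $\{s_n\}$ along which $y$, $x$ and the kernel $C_2$ converge pointwise to $\tilde y$, $\tilde x$ and $\tilde C_2$, and then define $\tilde\Gamma y(t):=\int_t^{+\infty}\tilde C_2(t,s,\tilde y(s),\tilde x(s))\,ds$. Lemma \ref{lemBou} together with the second bound in {\bf (H3)} shows $\|\tilde\Gamma y(t)\|\leq\sup_t\int_t^{+\infty}\lambda_2(t,s)\,ds=\alpha_2<+\infty$, so $\tilde\Gamma y$ is well defined. The pointwise convergence to $0$ of the integrand $C_2(t+s_n,s+s_n,y(s+s_n),x(s+s_n))-\tilde C_2(t,s,\tilde y(s),\tilde x(s))$ follows by inserting the intermediate term $\tilde C_2(t,s,y(s+s_n),x(s+s_n))$ and using the $\tilde\mu_2$-Lipschitz estimate of {\bf (H4)}, precisely as for the functions $\mho_n$ and $\mathcal{P}_n$ in Lemma \ref{lem12}. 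Since $\lambda_2$-boundedness and Lemma \ref{lemBou} again furnish the uniform, $n$-independent dominating function $2\lambda_2(t,s)$ on $[t,+\infty)$, dominated convergence gives $\Gamma y(t+s_n)\to\tilde\Gamma y(t)$ and, by the symmetric argument, $\tilde\Gamma y(t-s_n)\to\Gamma y(t)$, which is the required almost automorphy.

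The only point that genuinely feels the half-line geometry is ensuring that the dominating function remains integrable over the \emph{unbounded} interval $[t,+\infty)$; this is exactly what the second supremum condition $\alpha_2<+\infty$ in {\bf (H3)} guarantees. Consequently no new obstacle arises, and the argument closes in complete analogy with Lemma \ref{lem12}.
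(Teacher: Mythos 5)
Your proposal is correct and is precisely the argument the paper intends: the paper gives no separate proof of Lemma \ref{lem13}, stating only that it holds "analogously" to Lemma \ref{lem12}, and your adaptation — the forward change of variables $s=t+r$, the right-tail bounds $\alpha_2$, $\lambda_2$, and the $(\mu_2,\tilde\mu_2)$-Lipschitz estimates in place of their left-tail counterparts — is exactly that analogy carried out correctly. No gaps.
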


For the particular case $C(t,s,y(s),x(b_1(s)))=\Theta(t,s)f(s,y(s),y(b_1(s)))$ which codify the convolution situation $C(t,s)=C(t-s)$, we have the following corollary of lemma \ref{lem12}
\begin{cor}
 Let $f \in AA(\R\times\X\times\X;\X)$ and  $\Theta : \mathbb{R}\times \mathbb{R}\to BC(\X;\X)$ be such that they satisfies conditions {\bf (E)}-$(a)$ and {\bf (E)}-$(b)$ respectively. 
% is a $L^1(\R^+)$-Bi-almost automorphic function (FALTA DECIR QUE ES) and satisfies condition $S2$. 
 Then the integral operator $\Gamma $, such that :
$$\Gamma y (t)=\int_{-\infty}^{t}\Theta(t,s)f(s,y(s),y(a_1(s)))ds\; ,$$
maps $ AA(\R;\X)$ into $ AA(\R;\X)$.
\end{cor}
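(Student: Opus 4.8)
The plan is to reduce the statement to Lemma \ref{lem12}. Setting
$$C_1(t,s,u,v):=\Theta(t,s)f(s,u,v),$$
the operator $\Gamma$ in the corollary is precisely the operator of Lemma \ref{lem12} associated with this kernel, so it suffices to check that $C_1$ is jointly continuous, Bi-almost automorphic, and satisfies conditions {\bf (H3)} and {\bf (H4)}; granting that {\bf (H1)} holds for $a_1$, the conclusion then follows verbatim. Joint continuity of $C_1$ is immediate from the continuity of $\Theta$ and of $f$.

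The heart of the argument is the Bi-almost automorphicity of $C_1$. Given a sequence $\{s_n'\}$ and a bounded set $\mathcal{B}\subset\X\times\X$, I would first invoke {\bf (E)}-(b) to extract a subsequence $\{s_n\}$ along which $\Theta(t+s_n,s+s_n)w\to\tilde\Theta(t,s)w$, with convergence uniform for $w$ on bounded subsets of $\W$; refining once more, {\bf (E)}-(a) supplies the limit $f(s+s_n,u,v)\to\tilde f(s,u,v)$, pointwise in $s$ and uniformly for $(u,v)\in\mathcal{B}$. Defining $\tilde C_1(t,s,u,v):=\tilde\Theta(t,s)\tilde f(s,u,v)$ and using the decomposition
$$\Theta(t+s_n,s+s_n)f(s+s_n,u,v)-\tilde\Theta(t,s)\tilde f(s,u,v)$$
$$=\Theta(t+s_n,s+s_n)\big[f(s+s_n,u,v)-\tilde f(s,u,v)\big]+\big[\Theta(t+s_n,s+s_n)-\tilde\Theta(t,s)\big]\tilde f(s,u,v),$$
the first summand is bounded by $\lambda_1(t,s)$ times the uniform-in-$(u,v)$ convergence of $f$ (here I use $\|\Theta(t+s_n,s+s_n)\|_{BC}\le\lambda_1(t,s)$, the $\lambda_1$-boundedness with $\tau=s_n$), while the second summand tends to $0$ because $\Theta$ converges strongly, uniformly on the bounded family $\{\tilde f(s,u,v):(u,v)\in\mathcal{B}\}$. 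This yields $C_1(t+s_n,s+s_n,u,v)\to\tilde C_1(t,s,u,v)$, and the reverse limit $\tilde C_1(t-s_n,s-s_n,u,v)\to C_1(t,s,u,v)$ is obtained by the identical estimate.

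It remains to verify {\bf (H3)} and {\bf (H4)}, which are routine. Taking $\tau=0$ in the $\lambda_1$-boundedness of $\Theta$ gives $\|\Theta(t,s)\|_{BC}\le\lambda_1(t,s)$, so that $\|C_1(t+\tau,s+\tau,u,v)\|\le\lambda_1(t,s)\,\|f\|_\infty$ for every $\tau$, with $\|f\|_\infty<+\infty$ since $f\in BC$; hence $C_1$ is $\lambda_1\|f\|_\infty$-bounded and the integrability conditions (\ref{EqN01}) follow from (\ref{EeQq01}). For {\bf (H4)}, the Lipschitz estimate of {\bf (E)}-(a) combined with $\|\Theta(t,s)\|_{BC}\le\lambda_1(t,s)$ gives
$$\|C_1(t,s,u_1,u_2)-C_1(t,s,v_1,v_2)\|\le L_1\lambda_1(t,s)\big(\|u_1-v_1\|+\|u_2-v_2\|\big),$$
so one takes $\mu_1=L_1\lambda_1$; the analogous bound for $\tilde C_1$ holds with $\tilde\mu_1=L_1\lambda_1$, since $\|\tilde\Theta(t,s)\|_{BC}\le\lambda_1(t,s)$ by Lemma \ref{lemBou} and $\tilde f$ inherits the constant $L_1$ upon passing to the limit.

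The step I expect to be the main obstacle is the Bi-almost automorphicity of the product: one must merge the operator-valued Bi-almost automorphy of $\Theta$ (strong convergence, uniform on bounded subsets of $\W$) with the almost automorphy of $f$ along a single common subsequence, and dominate the resulting cross term by the operator-norm bound coming from $\lambda_1$-boundedness. Once this is in place, {\bf (H3)} and {\bf (H4)} are mechanical and Lemma \ref{lem12} closes the argument.
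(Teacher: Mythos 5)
Your proposal is correct and takes essentially the same route as the paper: the paper presents this statement as an immediate consequence of Lemma \ref{lem12} applied to the kernel $C_1(t,s,u,v):=\Theta(t,s)f(s,u,v)$, which is exactly your reduction. Your verification that this kernel is Bi-almost automorphic and satisfies \textbf{(H3)} and \textbf{(H4)} (with $\mu_1=\tilde\mu_1=L_1\lambda_1$), together with your observation that \textbf{(H1)} for $a_1$ is implicitly required, merely fills in details the paper leaves unstated.
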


In the same way, the analogous corollary to the previous one, but for lemma \ref{lem13} can be deduced, we omit the details. Now, since the space of almost automorphic functions is a vector space, we conclude the following corollary:
%\paragraph{Demostración.} La demostración  es análoga a la del lema \ref{lem12}, por lo que se omite. $\square.$\\

\begin{cor}\label{CorAA}
Suppose that condition {\bf (H1)} holds and $C_1,C_2 :\R\times \R\times \X\times \X \to \X$ are Bi-almost automorphic  functions that satisfies conditions {\bf (H3)} and {\bf (H4)}. Then, the integral operator $\Gamma $, defined by
$$
\Gamma y (t)=\int_{-\infty}^{t}C_1(t,s,y(s),y(a_1(s)))ds+\int_{t}^{+\infty}C_2(t,s,y(s),y(a_2(s)))ds\; ,$$
maps $ AA(\R;\X)$ into $ AA(\R;\X)$.
\end{cor}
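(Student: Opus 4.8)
The plan is to exploit the linearity of the operator together with the two lemmas just established. I would begin by decomposing $\Gamma$ as a sum $\Gamma = \Gamma_1 + \Gamma_2$, where
$$\Gamma_1 y(t) = \int_{-\infty}^t C_1(t,s,y(s),y(a_1(s)))\,ds, \qquad \Gamma_2 y(t)=\int_t^{+\infty}C_2(t,s,y(s),y(a_2(s)))\,ds\; .$$
The hypotheses of the corollary are tailored precisely so that both summands fall under the scope of the preceding results: condition {\bf (H1)} is assumed, and each of $C_1$ and $C_2$ is Bi-almost automorphic and satisfies {\bf (H3)} and {\bf (H4)}. Consequently Lemma \ref{lem12} applies verbatim to $\Gamma_1$ and Lemma \ref{lem13} applies verbatim to $\Gamma_2$.

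Fixing $y \in AA(\R;\X)$, these two lemmas yield $\Gamma_1 y \in AA(\R;\X)$ and $\Gamma_2 y \in AA(\R;\X)$. To conclude, I would invoke the fact that $AA(\R;\X)$ is a vector space (indeed, a Banach space under the sup-norm), hence closed under addition; therefore $\Gamma y = \Gamma_1 y + \Gamma_2 y \in AA(\R;\X)$, as desired.

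The bulk of the analytic effort — establishing continuity of each integral through the $\lambda_i$-boundedness and Lebesgue dominated convergence, and the construction of the well-defined limit operators $\tilde{\Gamma}_i y$ via a Bi-almost automorphic subsequence — has already been discharged inside Lemmas \ref{lem12} and \ref{lem13}, so I anticipate no genuine difficulty. The only detail deserving a word is that almost automorphy of the sum requires a single subsequence serving both terms; but this is automatic from the vector-space structure, or, if one prefers an explicit argument, by first extracting a subsequence adapted to $\Gamma_1 y$ and then passing to a further subsequence adapted to $\Gamma_2 y$. This is exactly the observation recorded just before the statement, so the proof collapses to essentially one line.
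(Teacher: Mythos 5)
Your proposal is correct and follows exactly the paper's route: the paper states the corollary as an immediate consequence of Lemma \ref{lem12}, Lemma \ref{lem13}, and the fact that $AA(\R;\X)$ is a vector space, which is precisely your decomposition $\Gamma=\Gamma_1+\Gamma_2$. Your closing remark about extracting a subsequence for the first term and refining it for the second is exactly the (standard) argument hidden inside the vector-space claim, so nothing is missing.
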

The following lemma is needed for the study of asymptotically almost automorphic solutions of the integral  equation (\ref{eqNew}), in particular it is of interest for the  integro-differential equation (\ref{eqX})-(\ref{eqXX}).

%[falta modificar el siguiente lema]
\begin{lem}\label{l23} Suppose condition {\bf (H2)} holds and, for $i=1,2$ the functions $B_i:\mathbb{R}\times \mathbb{R}\times \X \times \X \to \X$ 
%have the decomposition $B_i=B_i^a+B_{i,0}^{\theta_i}$, 
% $(\cdot,\cdot,\cdot,\cdot)$ are $(\lambda_i,\eta_i)$-Bi-almost automorphic 
% and they 
satisfies condition {\bf (H5)}. %and $B_1$ satisfy condition {\bf (B1)}. 
 Then,  
%If $\{R(t)\}_{t\geq0}$ satisfies {\bf (A)} and $g \in AAA(\R^+\times \X\times\X;\X)$ function, then for all  $u \in AAA(\R^+;\X)$ 
the integral operators $F_1$ and $F_2$, such that
$$ F_1 y(t)=\int_0^{t} B_1(t,s,y(s),y(b_1(s)))ds\; ,$$
$$ F_2 y(t)=\int_t^{\infty} B_2(t,s,y(s),y(b_2(s)))ds\; ;$$
maps $AAA(\mathbb{R}^+; \X)$ into $AAA(\mathbb{R}^+; \X)$ .
\end{lem}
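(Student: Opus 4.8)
The plan is to treat the $F_1$ case in full and note that $F_2$ is analogous and in fact simpler, its integration range $[t,\infty)$ already lying in $\R^+$. First I would write $y=y^a+y^0$ with $y^a\in AA(\R;\X)$ and $y^0\in C_0(\R^+;\X)$. By \textbf{(H2)} the map $s\mapsto y(b_1(s))$ is again asymptotically almost automorphic, so by the uniqueness of the decomposition (Theorem \ref{TeoAAA}) I may write $y(b_1(\cdot))=z_1+w_1$ with $z_1\in AA(\R;\X)$ and $w_1\in C_0(\R^+;\X)$. Using the splitting $B_1=B_1^a+B_{1,0}^{\theta_1}$ from \textbf{(H5)}, I propose as the almost automorphic component of $F_1y$ the full-line integral
$$\Phi_1(t):=\int_{-\infty}^t B_1^a(t,s,y^a(s),z_1(s))\,ds,\qquad t\in\R.$$

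Next I would verify that $\Phi_1\in AA(\R;\X)$. Since $B_1^a$ is Bi-almost automorphic, satisfies \textbf{(H3)}, and is $(\nu_1,\tilde\nu_1)$-Lipschitz, which is exactly \textbf{(H4)} with $\mu_1=\nu_1,\ \tilde\mu_1=\tilde\nu_1$, this is the setting of Lemma \ref{lem12}, the only difference being that the two slots carry two independent almost automorphic inputs $y^a$ and $z_1$ rather than $y(\cdot)$ and $y(a_1(\cdot))$. The proof of Lemma \ref{lem12} uses only that both inputs are almost automorphic, so it applies essentially verbatim and yields $\Phi_1\in AA(\R;\X)$, the convergence of the integral being guaranteed by $\sup_t\int_{-\infty}^t\nu_1(t,s)\,ds=\beta_1<+\infty$ from (\ref{Nweq2}).

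It then remains to show $R_1:=(F_1y)|_{\R^+}-\Phi_1|_{\R^+}\in C_0(\R^+;\X)$. Writing $R_1(t)=\int_0^t[B_1(t,s,y(s),y(b_1(s)))-B_1^a(t,s,y^a(s),z_1(s))]\,ds-\int_{-\infty}^0 B_1^a(t,s,y^a(s),z_1(s))\,ds$ and inserting $B_1=B_1^a+B_{1,0}^{\theta_1}$, I would split $R_1$ into three terms and estimate their norms separately: (i) the kernel-ergodic term $\int_0^t B_{1,0}^{\theta_1}(t,s,y(s),y(b_1(s)))\,ds$, bounded by $\int_0^t\theta_1(t,s)\widehat B(s,y(s),y(b_1(s)))\,ds$, where splitting $\int_0^t=\int_0^T+\int_T^t$ gives a tail $\leq(\sup_{s\geq T}\widehat B)\,P_1$ and a head tending to $0$ because $\lim_{t\to\infty}\int_0^T\theta_1(t,s)\,ds=0$; (ii) the Lipschitz term $\int_0^t[B_1^a(t,s,y(s),y(b_1(s)))-B_1^a(t,s,y^a(s),z_1(s))]\,ds$, bounded by $\int_0^t\nu_1(t,s)(\|y^0(s)\|+\|w_1(s)\|)\,ds$, where $y^0,w_1\in C_0$ make the integrand factor vanish and the same head/tail split, now using (\ref{Nweq2}) and the compact-interval limit (\ref{Nweq1}), drives the term to $0$; and (iii) the left-tail term $\int_{-\infty}^0 B_1^a(t,s,y^a(s),z_1(s))\,ds$, bounded through $\|B_1^a(t,s,0,0)\|\leq\vartheta(t,s)$ and the Lipschitz estimate by $\int_{-\infty}^0\vartheta(t,s)\,ds+(\|y^a\|_\infty+\|z_1\|_\infty)\int_{-\infty}^0\nu_1(t,s)\,ds$.

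I expect term (iii) to be the main obstacle: controlling $\int_{-\infty}^0\nu_1(t,s)\,ds$ as $t\to\infty$ is not immediate from the finiteness $\beta_1<+\infty$ alone, and one must combine the compact-interval decay (\ref{Nweq1}) on $\int_{-M}^0$ with a uniform smallness of the far tail $\int_{-\infty}^{-M}$ (automatic in the convolution case $\nu_1(t,s)=\nu_1(t-s)\in L^1(\R^+)$, where $\int_{-\infty}^0\nu_1(t-s)\,ds=\int_t^\infty\nu_1(\sigma)\,d\sigma\to0$), while the $B_1^a(t,s,0,0)$ part is dispatched directly by the last assumption of \textbf{(H5)}. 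Once the three terms vanish at infinity we obtain $R_1\in C_0(\R^+;\X)$, hence $F_1y=\Phi_1+R_1\in AAA(\R^+;\X)$. For $F_2$ the candidate is $\Phi_2(t)=\int_t^\infty B_2^a(t,s,y^a(s),z_2(s))\,ds$ (almost automorphic by Lemma \ref{lem13}, with $z_2,w_2$ the components of $y(b_2(\cdot))$), and because both the kernel-ergodic and Lipschitz remainders are integrated over $[t,\infty)\subset\R^+$, their integrand factors already tend to $0$ uniformly for $s\geq t$ as $t\to\infty$, so the bounds using $P_2$ and $\beta_2$ close the argument with no left-tail term; this asymmetry is exactly why (\ref{Nweq1}), the $\theta_1$-limit, and the $\vartheta$-condition in \textbf{(H5)} are imposed only on the index $i=1$.
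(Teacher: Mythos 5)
Your proposal follows essentially the same route as the paper's proof: the same splitting of $F_1y$ into an almost automorphic part $\Phi_1(t)=\int_{-\infty}^{t}B_1^a(t,s,y^a(s),z_1(s))\,ds$ handled by Lemma \ref{lem12}, the same three remainder terms (ergodic kernel, Lipschitz difference, left tail $\int_{-\infty}^0$) estimated by the same head/tail splittings, and the same observation that $F_2$ needs no left-tail term because its range of integration $[t,+\infty)$ escapes to infinity. The step you single out as the main obstacle --- proving $\int_{-\infty}^0\nu_1(t,s)\,ds\to 0$ as $t\to\infty$ --- is precisely where the paper's own proof invokes a dyadic decomposition, asserting from (\ref{Nweq1}) that $\int_{-(n+1)}^{-n}\nu_1(t,s)\,ds<\epsilon 2^{-(n+1)}$ for \emph{all} $n$ once $t\geq T_0$, which is a uniform-in-$n$ strengthening of the stated compact-interval hypothesis; so your caution there is warranted and reflects a genuine subtlety of the paper's argument rather than a defect of yours.
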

\begin{proof}
First let us prove that the operator $F_1$ leaves invariant the space $AAA(\mathbb{R}^+; \X)$. Let $x(s)=y(b_1(s))$; then by  hypothesis if $y=y^a+y^0$ is in $AAA(\mathbb{R}^+; \X)$, then  $x=x^a+x^0$  belongs to $AAA(\mathbb{R}^+; \X)$. Also, from hypothesis we have that $B_i=B_i^a+B_{i,0}^{\theta_i}$, where $i=1,2$. Therefore

%$B(t,s,y(s),x(s))$ is $(\lambda,\theta)$-AAA; which implies that $B_1=B_1^a+B_1^0$, where from the proof of lemma \ref{LEM001} $B_1^a=B_1(t,s,y^a,x^a)$ is in AA 
%and $B_1^0=B_1(t,s,y,x)-B_1(t,s,y^a,x^a)$ is in $C_0(\cdots UHMMMM \cdots)???$. Therefore, we have:

\begin{eqnarray*}
F_1 y (t)&=& \int_0^t B_1(t,s,y(s),x(s))ds=\int_0^t \left( B_1^a(t,s,y(s),x(s))+B_{1,0}^{\theta_1}(t,s,y(s),x(s))\right) ds\\
&=& \int_0^t \Big{(} B_1^a(t,s,y(s),x(s))-B_1^a(t,s,y^a(s),x^a(s)) \Big{)}ds +\\
&+& \int_0^t  B_1^a(t,s,y^a(s),x^a(s)) ds + \int_0^t B_{1,0}^{\theta_1}(t,s,y(s),x(s))ds \\
		 &=& \int_{-\infty}^t  B_1^a(t,s,y^a(s),x^a(s))ds -\int_{-\infty}^0 B_1^a(t,s,y^a(s),x^a(s))ds +\\
		 &+&\int_0^t \Big{(} B_1^a(t,s,y(s),x(s))-B_1^a(t,s,y^a(s),x^a(s)) \Big{)} ds+\int_0^t B_{1,0}^{\theta_1}(t,s,y(s),x(s))ds\\
		 &=:& J_1(t)+J_2(t)\; ,
\end{eqnarray*}
where: 
$$J_1(t):=\int_{-\infty}^t  B_1^a(t,s,y^a(s),x^a(s))ds\; ,$$
and
\begin{eqnarray*}
J_2(t)&:=&\int_0^t \Big{(} B_1^a(t,s,y(s),x(s))-B_1^a(t,s,y^a(s),x^a(s)) \Big{)} ds-
 \int_{-\infty}^0 B_1^a(t,s,y^a(s),x^a(s))ds+\\
 &+& \int_0^t B_{1,0}^{\theta_1}(t,s,y(s),x(s))ds\; .
\end{eqnarray*}
%$$J_2(t)=\int_0^t \left( B_1^a(t,s,y(s),x(s))-B_1^a(t,s,y^a(s),x^a(s))\right)ds-$$
%$$-\int_{-\infty}^0 B_1^a(t,s,y^a(s),x^a(s))ds+\int_0^t B_{1,0}^{\theta_1}(t,s,y(s),x(s))ds\; .$$
We claim that $J_1 \in AA(\R;\X)$ and $J_2 \in C_0(\R^+;\X)$.\\
The assertion that $J_1 \in AA(\R;\X)$ is clear from  conditions in {\bf (H5)} and Lemma \ref{lem12}. We will have the second affirmation into three steps:\\

\noindent {\bf The first step:} for the first integral in $J_2$, we have: 

$$\int_0^t \Big{|}\Big{|}B_1^a(t,s,y(s),x(s))-B_1^a(t,s,y^a(s),x^a(s))\Big{|}\Big{|}ds\leq \int_0^t \nu_1(t,s)\Big{(}||y^0(s)|| +||x^0(s)||\Big{)}ds\; .$$
Now, since 
%$\tilde{B_1^0}(s,y(s),x(s))\in C_0(\R^+\times \X \times \X; \X)$ 
$y^0,x^0 \in C_0(\R^+; \X)$ we have: given $\epsilon >0$, there exist $T>0$ such that if $s>T$, then 
$||y^0(s)||<\dfrac{\epsilon}{2}$ and $ ||x^0(s)||<\dfrac{\epsilon}{2}$. Therefore,  for $t>0$ big enough  and using condition {\bf (H5)}, we have

\begin{eqnarray*}
\int_0^t ||B_1^a(t,s,y(s),x(s))-B_1^a(t,s,y^a(s),x^a(s))||ds &\leq& \int_0^T \nu_1(t,s)\Big{(} ||y^0(s)||+||x^0(s)||\Big{)} ds\\
&+&\int_T^t \nu_1(t,s)\Big{(} ||y^0(s)||+||x^0(s)||\Big{)}ds\\
&\leq& \Big{(} ||y^0||_{\infty}+||x^0||_{\infty} \Big{)} \int_0^T \nu_1(t,s)ds\\
&+& \epsilon \int_T^t \nu_1(t,s)ds\\
&< &(||y^0||_{\infty}+||x^0||_{\infty}+\beta_1)\epsilon\; .
\end{eqnarray*}

\noindent {\bf The second step:}  for the second integral, and using conditions {\bf (H5)} again, we have: given $\epsilon >0$, there exist $T_0>0$, such that if $t\geq T_0$, then by (\ref{Nweq1}) we have
$$ \int_{-(n+1)}^{-n}\nu_1(t,s)ds<\frac{\epsilon}{2^{n+1}}\; , \; \; \forall n \in \mathbb{N}\; ; $$
and also 
$$\int_{-\infty}^0\vartheta(t,s)ds <\epsilon\; .$$
Therefore, for $t\geq T_0$ we obtain :

\begin{eqnarray*}
||\int_{-\infty}^0 B_1^a(t,s,y(s),x(s))ds||&\leq&\int_{-\infty}^0 ||B_1^a(t,s,y^a(s),x^a(s))||ds \\%\leq \int_{-\infty}^0 \theta(t,s)|B_1(s,y^a(s),x^a(s))|ds
& \leq &\int_{-\infty}^0 ||B_1^a(t,s,y^a(s),x^a(s))-B_1^a(t,s,0,0)||ds\\
&+&\int_{-\infty}^0 ||B_1^a(t,s,0,0)||ds\\
&\leq & \int_{-\infty}^0\nu_1(t,s) \Big{(}||y^a(s)||+||x^a(s)||\Big{)}ds+ \int_{-\infty}^0 ||B_1^a(t,s,0,0)||ds\\
&\leq & \Big{(}||y^a||_{\infty}+||x^a||_{\infty}\Big{)}\int_{-\infty}^0\nu_1(t,s)ds+ \int_{-\infty}^0 ||B_1^a(t,s,0,0)||ds\\
&\leq & \Big{(}||y^a||_{\infty}+||x^a||_{\infty}\Big{)} \left( \sum_{n=0}^{\infty} \int_{-(n+1)}^{-n}\nu_1(t,s)ds \right)+ \int_{-\infty}^0 \vartheta(t,s)ds\\
& < &(||y^a||_{\infty}+||x^a||_{\infty}+1)\epsilon.
\end{eqnarray*}

\noindent {\bf The third step:} In this step we want to prove the following limit %concerns with the smallness of the third integral, that is of
$$\lim_{t\to +\infty}\int_0^t B_{1,0}^{\theta_1}(t,s,y(s),x(s))ds=0\; .$$
In fact, from condition {\bf (H5)}, we have:  
$$||B_{1,0}^{\theta_1}(t,s,x,y)||\leq \theta_1(t,s)\widehat{B}_{1,0}(s,x,y)\, ,$$
where $\widehat{B}_{1,0} \in C_0(\mathbb{R}^+ \times \X \times \X; \mathbb{R}^+)$, %with $\mathcal{X}=AAA(\mathbb{R}^+; \X)$ 
and 
$$\lim_{t\to \infty}\int_0^T\theta_1(t,s)ds=0\, ,\; \; \forall T>0\, .$$
Therefore, given $\epsilon >0$, there exists $T_1>0$, such that if $t>T_1$, then
\begin{eqnarray*}
||\int_0^t B_{1,0}^{\theta_1}(t,s,y(s),x(s))ds\;|| &\leq&  \int_0^t|| B_{1,0}^{\theta_1}(t,s,y(s),x(s))||ds \\
 &\leq &  \int_0^t\theta_1(t,s)\widehat{B}_{1,0}(s,y(s),x(s))ds\\
 &\leq & \int_0^{T_1}\theta_1(t,s)\widehat{B}_{1,0}(s,y(s),x(s))ds+ \int_{T_1}^t\theta_1(t,s)\widehat{B}_{1,0}(s,y(s),x(s))ds\\
 &\leq & \sup_{s\in [0,+\infty[} \left( \widehat{B}_{1,0}(s,y(s),x(s)) \right) \int_0^{T_1}\theta_1(t,s)ds\\
 &+& \int_{T_1}^t\theta_1(t,s)\widehat{B}_{1,0}(s,y(s),x(s))ds\\ 
 & < & \left( \sup_{s\in [0,+\infty[} \left( \widehat{B}_{1,0}(s,y(s),x(s)) \right)+  \int_{T_1}^t\theta_1(t,s)ds \right)\epsilon\\
 & < & \left( \sup_{s\in [0,+\infty[} \left( \widehat{B}_{1,0}(s,y(s),x(s)) \right)+ P_1 \right)\epsilon\; .
\end{eqnarray*} 
This proves that $F_1$ is an operator that leaves invariant the space $AAA(\R^+; \X)$.\\

\noindent Now, let us prove that the operator $F_2$ leaves invariant the space $AAA(\R^+; \X)$. Let us define again $x(s)=y(b_2(s))$, and take $y=y^a+y^0$ in $AAA(\mathbb{R}^+; \X)$; then by hypothesis the new function $x=x^a+x^0$ is also in $AAA(\mathbb{R}^+; \X)$. Therefore, we have 

\begin{eqnarray*}
\int_t^{+\infty} B_2(t,s,y(s),x(s))ds &=&\int_t^{+\infty}\left( B_2^a(t,s,y(s),x(s))+B_{2,0}^{\theta_2}(t,s,y(s),x(s))\right) ds\\
&=&\int_t^{+\infty}\Big{(}  B_2^a(t,s,y(s),y(x(s)))-B_2^a(t,s,y^a(s),x^a(s)) \Big{)} ds +\\
&+& \int_t^{+\infty} B_2^a(t,s,y^a(s),x^a(s))ds + \int_t^{+\infty}B_{2,0}^{\theta_2}(t,s,y(s),x(s)) ds\\
&=& J_3(t)+J_4(t)\; ,
\end{eqnarray*}
where
$$J_3(t):= \int_t^{+\infty}B_2^a(t,s,y^a(s),x^a(s)))ds\; ,$$
and 
$$J_4(t):= \int_t^{+\infty} \Big{(} B_2^a(t,s,y(s),x(s))-B_2^a(t,s,y^a(s),x^a(s))\Big{)}ds+ \int_t^{+\infty}B_{2,0}^{\theta_2}(t,s,y(s),x(s)) ds\; .$$
The integral $J_3(t)$ is almost automorphic because of the conditions in {\bf (H5)} and lemma \ref{lem13}.  Now let us prove that  
$$\lim_{t \to +\infty}J_4(t)=0\; .$$ 
We proceed in two steps:\\

\noindent {\bf The first step:} For the first integral of $J_4$, we have
\begin{eqnarray*}
\Big{|} \Big{|} \int_t^{+\infty} \Big{(} B_2^a(t,s,y(s),x(s))-B_2^a(t,s,y^a(s),x^a(s))\Big{)}ds\Big{|} \Big{|} \leq \int_t^{+\infty}\nu_2(t,s)\Big{(} ||y^0(s)||+||x^0(s)|| \Big{)}ds\, .
\end{eqnarray*}
Since $y^0, x^0 \in C_0(\R^+;\X)$, then given $\epsilon >0$, there exists $T_2>0$ such that for every $t\geq  T_2$  we obtain $||y^0(t)||+||x^0(t)||<\epsilon$,  now using condition (\ref{Nweq2}) in {\bf (H5)} we conclude that $||J_4(t)||< \epsilon\, \beta_2$ for every $t\geq T_2$ .\\

\noindent {\bf The second step:} In order to estimate the second integral of $J_4$, we use condition {\bf (H5)} as follows:
$$||B_{2,0}^{\theta_2}(t,s,x,y)||\leq \theta_2(t,s)\widehat{B}_{2,0}(s,x,y)\, ,$$
where $\widehat{B}_{2,0} \in C_0(\mathbb{R}^+ \times \X \times \X; \mathbb{R}^+)$;  then, given $\epsilon >0$, there exists $T_3>0$ such that for every $t\geq T_3$, we obtain
\begin{eqnarray*}
||\int_t^{+\infty} B_{2,0}^{\theta_2}(t,s,y(s),x(s))ds\;|| &\leq&  \int_t^{+\infty}|| B_{2,0}^{\theta_2}(t,s,y(s),x(s))||ds \\
 &\leq &  \int_t^{+\infty}\theta_2(t,s)\widehat{B}_{2,0}(s,y(s),x(s))ds\\
 & < & \epsilon \int_t^{+\infty}\theta_2(t,s)ds\\
 & < & \epsilon P_2\; .
\end{eqnarray*} 
This proves that the operator $F_2$ leaves invariant the space $ AAA(\R^+; \X)$.
\end{proof}

At this point, we have given some preliminary results on almost and asymptotically almost automorphic functions needed for our approach. The results on existence and uniqueness for the equations of our interests are a combination of the previous one and of the following two important abstract theorems, which are applied to ensure the local existence and uniqueness of the solution to abstract equations.

\begin{thm}\label{teos1}
Let $(\X,||\cdot||)$ be a Banach space and $T:\X \to \X$ an operator, $\varrho>0$ a real number and $\Delta_0=\{y \in \X : \|y-y_0\|\leq \varrho\},$ where
$y_0=T(0)$, with $||y_0||\leq\varrho$. Suppose that $T$ satisfies $||Tx-Ty||\leq L_T||x-y||$, for
all $ \ x,y \ \in  \Delta_0,$ and the constants $L_T,\varrho,||y_0||_{\infty}$ satisfy
$L_T<\frac{\varrho}{\varrho+||y_0||}.$ Then, the equation $Tu=u$ has a unique solution in $\Delta_0$.
\end{thm}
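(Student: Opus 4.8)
The plan is to recognize this statement as a localized version of the Banach contraction principle and to deduce it from the standard fixed point theorem applied to the closed ball $\Delta_0$. The only genuine content beyond the classical theorem is checking that the hypotheses force $T$ to be a self-map of $\Delta_0$; the quantitative condition $L_T < \frac{\varrho}{\varrho+\|y_0\|}$ is tailored precisely for this. So the argument splits into three verifications (completeness of the domain, the self-mapping property, and contractivity) followed by an invocation of the contraction mapping principle.

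First I would observe that $\Delta_0 = \{y \in \X : \|y-y_0\| \leq \varrho\}$ is a closed subset of the Banach space $\X$, hence $(\Delta_0, \|\cdot\|)$ is itself a complete metric space; this is what allows us to run a fixed point iteration inside $\Delta_0$ rather than on all of $\X$. I would also record the elementary but essential remark that $0 \in \Delta_0$: indeed $\|0 - y_0\| = \|y_0\| \leq \varrho$ by hypothesis. This is what makes the Lipschitz estimate applicable with $0$ as one of its arguments, which is needed in the next step.

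The main step, and the only place where the particular form of the constant is used, is to show $T(\Delta_0) \subseteq \Delta_0$. For $y \in \Delta_0$, since both $y$ and $0$ lie in $\Delta_0$ and $y_0 = T(0)$, the Lipschitz hypothesis gives $\|Ty - y_0\| = \|Ty - T(0)\| \leq L_T \|y\|$. Bounding $\|y\| \leq \|y - y_0\| + \|y_0\| \leq \varrho + \|y_0\|$ and then invoking $L_T < \frac{\varrho}{\varrho + \|y_0\|}$ yields $\|Ty - y_0\| \leq L_T(\varrho + \|y_0\|) < \varrho$, so $Ty \in \Delta_0$. I expect this self-mapping verification to be the crux of the proof: everything else is formal, whereas this is exactly the place the stated inequality on $L_T$ is consumed.

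Finally I would note that the same inequality forces contractivity: since $\|y_0\| \geq 0$ we have $\frac{\varrho}{\varrho + \|y_0\|} \leq 1$, hence $L_T < 1$, and $T$ is a genuine contraction on the complete space $\Delta_0$. Applying the Banach contraction mapping principle to $T\colon \Delta_0 \to \Delta_0$ then produces a unique $u \in \Delta_0$ with $Tu = u$, which is the desired conclusion. (If one prefers a self-contained argument rather than citing the principle, one can instead define the Picard iterates $u_{n+1} = T u_n$ with $u_0 \in \Delta_0$ arbitrary, use the self-mapping property to keep them in $\Delta_0$, and use $L_T < 1$ together with completeness to show they form a Cauchy sequence converging to the unique fixed point.)
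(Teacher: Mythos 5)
Your proof is correct, and it is essentially the argument the paper intends: the paper itself does not reproduce a proof of Theorem \ref{teos1} (it defers to Pinto--Robledo--Torres \cite{MPGRVT}), but the self-mapping estimate $\|Ty-y_0\|\leq L_T(\varrho+\|y_0\|)<\varrho$ followed by contractivity and the Banach fixed point theorem is exactly the computation the paper carries out in its applications (e.g.\ the proof of Theorem \ref{th24}). Nothing is missing.
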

Note that, in this theorem the closed ball $\Delta_0$ contains the zero vector of the Banach space $\X$. Since the Banach's contraction principle only need a complete metric space, it is also possible that the set $\Delta_0$ do not contain the zero vector. This situation is stated in the following theorem:
%Otro tratamiento para la ecuación (\ref{eqOP1}) se establece en el teorema que sigue, la ventaja con respecto al
%teorema anterior es que acá no necesariamente se exige que el conjunto $\Delta_0$ contenga al elemento neutro del
%espacio $\X$. Una aplicación de éste teorema en espacios de Banach generalizados se puede ver en \cite [Teorema
%2]{MPGRVT}.

\begin{thm}\label{teos2}
Let $(\X,||\cdot||)$ be a Banach space and $T:\X \to \X$ be an  operator, $\varrho>0$ a real number and $\Delta_0=\{y \in \X : \|y-y_0\|\leq \varrho\}$,
where $y_0=T(0)$. Suppose that $T$ satisfies  $||Tx-Ty||\leq L_T||x-y||,$ for all $ \ x,y \ \in \Delta_0,$ with $L_T<1,$
and the constants $y_0,Ty_0,L_T,\varrho$ satisfy $0<\theta=(1-L_T)^{-1}||Ty_0-y_0||\leq\varrho.$ Then, the equation $Tu=u$  has a unique solution in $\Delta_0.$
\end{thm}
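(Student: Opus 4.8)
The plan is to recognize Theorem \ref{teos2} as a packaged version of the Banach contraction principle applied to the restriction of $T$ to the closed ball $\Delta_0$. The entire argument then reduces to verifying the hypotheses of Banach's theorem on $\Delta_0$, of which only one carries genuine content: the invariance $T(\Delta_0)\subseteq\Delta_0$.

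First I would record the cheap observations. The ball $\Delta_0$ is a closed subset of the Banach space $\X$, hence a complete metric space under the metric induced by $\|\cdot\|$, and it is nonempty because $y_0=T(0)$ satisfies $\|y_0-y_0\|=0\leq\varrho$, so $y_0\in\Delta_0$. By hypothesis $T$ is $L_T$-Lipschitz on $\Delta_0$ with $L_T<1$, so it is a contraction there. The only remaining requirement is the self-mapping property.

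The core step is therefore establishing $T(\Delta_0)\subseteq\Delta_0$. For an arbitrary $y\in\Delta_0$ I would estimate $\|Ty-y_0\|$ by inserting $Ty_0$ and combining the triangle inequality with the Lipschitz bound, which is legitimate since both $y$ and $y_0$ lie in $\Delta_0$:
$$\|Ty-y_0\|\leq\|Ty-Ty_0\|+\|Ty_0-y_0\|\leq L_T\|y-y_0\|+\|Ty_0-y_0\|\leq L_T\varrho+\|Ty_0-y_0\|.$$
The hypothesis $\theta=(1-L_T)^{-1}\|Ty_0-y_0\|\leq\varrho$ rearranges exactly to $\|Ty_0-y_0\|\leq(1-L_T)\varrho$, so the last bound is at most $L_T\varrho+(1-L_T)\varrho=\varrho$. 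Hence $\|Ty-y_0\|\leq\varrho$, i.e. $Ty\in\Delta_0$, and the invariance is proved.

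With invariance in hand, $T\colon\Delta_0\to\Delta_0$ is a contraction on a complete metric space, and Banach's fixed point theorem delivers a unique $u\in\Delta_0$ with $Tu=u$. I do not anticipate a real obstacle: the quantitative condition $\theta\leq\varrho$ has evidently been reverse-engineered so that the triangle-inequality estimate closes up exactly, and once that estimate is arranged the conclusion is immediate. The one point I would be careful about is invoking the Lipschitz inequality only for pairs of points that are actually in $\Delta_0$ (here $y$ and $y_0$), since that is the set on which the assumption guarantees it; this is also the sole place where the completeness of $\Delta_0$ and the bound $L_T<1$ are used.
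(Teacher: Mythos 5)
Your proof is correct. Note that the paper itself does not reproduce a proof of this theorem: it defers to the original short proofs in \cite{MPGRVT} and instead carries out the argument in full inside its application theorems (for instance in the theorem on equation (\ref{eq5}) following Corollary \ref{th23}, and in Theorem \ref{theoaaa12}). In those proofs the invariant set is the \emph{smaller} ball $\mathfrak{B}(y_0,\theta)=\{z:\|z-y_0\|\leq\theta\}$: for $z$ in that ball one estimates
$$\|Tz-y_0\|\leq L_T\|z-y_0\|+\|Ty_0-y_0\|\leq L_T\theta+(1-L_T)\theta=\theta\, ,$$
so the contraction maps the $\theta$-ball into itself and the fixed point is in fact localized within distance $\theta$ of $y_0$, a slightly sharper conclusion than mere membership in $\Delta_0$. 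Your variant instead proves invariance of the full ball $\Delta_0$ of radius $\varrho$, using the equivalence $\theta\leq\varrho\iff\|Ty_0-y_0\|\leq(1-L_T)\varrho$; this is equally valid, and it has the small advantage that uniqueness in all of $\Delta_0$ falls out of Banach's theorem directly, whereas with the $\theta$-ball one should append the one-line remark that the bound $L_T<1$ on $\Delta_0$ excludes a second fixed point in $\Delta_0\setminus\mathfrak{B}(y_0,\theta)$ (a point the paper's application proofs leave implicit). Both routes hinge on exactly the same rearrangement of the hypothesis on $\theta$, so the difference is one of packaging rather than of substance.
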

The previous two abstract local theorems have appeared in the work of M. Pinto, G. Robledo \& V. Torres (see \cite{MPGRVT}), in which the authors give its short proofs; they also consider applications (among others things) to the linear attractivity  in quasilinear difference systems. Here we do not reproduce their proofs, but we will give full details of the proof in the applications of them; that is, of the theorems on existence and uniqueness of solutions of the integral and integro-differential equations studied here.

\subsection{The almost periodic case}
Bochner almost periodic functions is a strong version than almost automorphic functions, if we compare its definitions, what wee see is that it demands uniform convergence in its definition instead of pointwise convergence; see comments after definition \ref{def1} and see also the references \cite{39Cord,TDBook,29GM}. Since uniform convergence is stronger than pointwise convergence, the conditions for ensure the invariance of the almost periodic functions and of the asymptotically almost periodic functions under the operators appearing in lemmas \ref{lem12}, \ref{lem13} and \ref{l23} (respectively) need to be modified, in fact they will be weakened. As a first stage,  a weak notion of Bi-almost periodicity must be given. One of the authors of this work (M. Pinto) has studied existence and uniqueness of the pesudo-almost periodic solution to the equation (\ref{eq5})  in his work \cite{Pint10}. He used the notion of Bohr almost periodicity in his research and also give us a notion of Bi-almost periodicity in the Bohr sense. We refer the reader to the work \cite{Pint10} in order to analyze the conditions in the framework of (Bohr) almost periodicity; as a byproduct, the reader will clarify the differences between almost periodic and almost automorphic dynamics.
\section{Almost automorphic solutions of integral equations.}\label{aasol}

In this section we study the existence and uniqueness of the  almost automorphic solution for the integral equations  (\ref{eq5}) and (\ref{eq4}). Remember that the integral equation (\ref{eq5}), is:

$$
y(t)=f(t,y(t),y(a_0(t)))+\int_{-\infty}^{t}C_1(t,s,y(s),y(a_1(s)))ds+\int_{t}^{+\infty}C_2(t,s,y(s),y(a_2(s)))ds\; .
$$
%
%
%In order to ensure the existence and  uniqueness of the almost automorphic solution of this equation, we specify an operator $\Gamma: AA(\R;\X) \to  AA(\R;\X)$ defined by the right hand side of equation (\ref{eq5}), that is
%%\begin{eqnarray}\label{OPID1}
%%\Gamma: AA(\R;\X) &\to& AA(\R;\X) \nonumber \\ 
%%\Gamma y(t)&=& f(t,y(t),y(a_0(t)))+\int_{-\infty}^{t}C_1(t,s,y(s),y(a_1(s)))ds+\int_{t}^{+\infty}C_2(t,s,y(s),y(a_2(s)))ds. 
%%\end{eqnarray}
%\begin{equation}\label{OP01}
%\Gamma y(t)= f(t,y(t),y(a_0(t)))+\int_{-\infty}^{t}C_1(t,s,y(s),y(a_1(s)))ds+\int_{t}^{+\infty}C_2(t,s,y(s),y(a_2(s)))ds\; .
%\end{equation}
%Note that in the case $C_2\equiv 0$, the operator $\Gamma$ is reduced to 
%\begin{equation}\label{OP02}
%\Gamma y(t)= f(t,y(t),y(a_0(t)))+\int_{-\infty}^{t}C_1(t,s,y(s),y(a_1(s)))ds\; .
%\end{equation}
%%$$
%%y(t)=f(t,y(t))+\int_{-\infty}^{t}C_1(t-s)g_1(s,y(s))ds+\int_{t}^{+\infty}C_2(t-s)g_2(s,y(s))ds.
%%$$
%%The well definition of the operator $\Gamma$ is ensured 
%The operator $\Gamma$ will be well defined under reasonable hypothesis in the forthcoming results. The following theorem is our first result on existence and uniqueness.

\begin{thm}\label{th24}
Suppose that condition {\bf (H1)} holds, $C_1, C_2:\mathbb{R}\times \mathbb{R} \times \X \times \X \to \X$ are Bi-almost automorphic functions that satisfies conditions {\bf (H3)} and {\bf (H4)}. 
%
%
%such that $C_1$ is $\lambda$-bounded with $\lambda \in L^1(\R^+)$ and $C_2$ is $\eta$-bounded with $\eta \in L^1(\R^-)$. Moreover, $C_1$ and $C_2$ satisfies condition {\bf (H3)}. 
Let $f \in AA(\R\times \X \times \X; \X)$, \, $\varrho >0 $ and the set
$$\Delta_0=\{y \in AA(\R,\X):||y-y_0||_{\infty}\leq \varrho\}\, ,$$
with $y_0:\mathbb{R}\to \X$ defined by:
$$y_0(t)=f(t,0,0)+\int_{-\infty}^{t}C_1(t,s,0,0)ds+\int_{t}^{+\infty}C_2(t,s,0,0)ds\; .$$
Additionally that $||y_0||_{\infty}\leq\varrho$ and the following properties holds:\\
\begin{enumerate}
\item There exists a positive constant $L_f$  such that for all $ t \in \R, \ x,y,x_1,y_1 \in \Delta_0$:
$$||f(t,x(t),y(t))-f(t,x_1(t),y_1(t))||\leq L_f (||x(t)-x_1(t)||+||y(t)-y_1(t)||)\; .$$
% $$||g_{1}(t,y(t))-g_{2}(t,x(t))||\leq L_{g_1}||y(t)-x(t)||,$$ 
% $$||g_{2}(t,y(t))-g_{2}(t,x(t))||\leq L_{g_2}||y(t)-x(t)||.$$
\item The constants $\varrho,L_f,N_1,N_2$ %L_{g_1},L_{g_2},||C_1||_{L^1(\R^+)}$ y $||C_2||_{L^1(\R^-)}$ 
satisfies the following inequality:
\begin{equation} \label{eq27}
2(L_f+N_1+N_2)< \dfrac{\varrho}{\varrho+||y_0||_{\infty}}. 
\end{equation}
\end{enumerate}
%$ b)$. 
%L_f+L_{g_1}||C_1||_{L^1(\R^+)}+L_{g_2}||C_2||_{L^1(\R^-)}<\dfrac{\varrho}{\varrho+||y_0||_{\infty}}.
%\end{itemize}
Then the integral equation (\ref{eq5}) has a unique almost automorphic solution in $\Delta_0$.
\end{thm}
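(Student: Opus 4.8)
The plan is to realize the right-hand side of (\ref{eq5}) as a fixed-point operator and then invoke the abstract contraction principle of Theorem \ref{teos1}. Define $T:AA(\R,\X)\to AA(\R,\X)$ by
$$Ty(t)=f(t,y(t),y(a_0(t)))+\int_{-\infty}^{t}C_1(t,s,y(s),y(a_1(s)))\,ds+\int_{t}^{+\infty}C_2(t,s,y(s),y(a_2(s)))\,ds\; ,$$
so that the almost automorphic solutions of (\ref{eq5}) in $\Delta_0$ are precisely the fixed points of $T$ lying in $\Delta_0$. The verification then splits into three tasks: that $T$ maps $AA(\R,\X)$ into itself, that $T(0)=y_0$, and that $T$ is a contraction on $\Delta_0$ with a constant controlled by (\ref{eq27}).

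For the self-mapping property I would argue term by term. The two integral terms are handled directly by Corollary \ref{CorAA}, which under {\bf (H1)}, {\bf (H3)} and {\bf (H4)} guarantees that $y\mapsto \int_{-\infty}^{t}C_1(\cdots)\,ds+\int_{t}^{+\infty}C_2(\cdots)\,ds$ sends $AA(\R,\X)$ into $AA(\R,\X)$. For the forcing term, by {\bf (H1)} the function $y(a_0(\cdot))$ is almost automorphic whenever $y$ is; and since $f\in AA(\R\times\X\times\X;\X)$ is, by the local Lipschitz condition (1), uniformly continuous on the relatively compact set $\overline{\{y(t):t\in\R\}}\times\overline{\{y(a_0(t)):t\in\R\}}$, the almost automorphic analogue of the composition Lemma \ref{l22} yields $f(\cdot,y(\cdot),y(a_0(\cdot)))\in AA(\R,\X)$. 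Because $AA(\R,\X)$ is a vector space, $Ty\in AA(\R,\X)$. Setting $y\equiv 0$ gives $T(0)=y_0$, and the standing hypothesis $\|y_0\|_{\infty}\leq\varrho$ places $y_0=T(0)$ inside $\Delta_0$, exactly as required by Theorem \ref{teos1}.

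The contraction estimate is the computational core. For $y,z\in\Delta_0$ I would bound the three contributions separately. Condition (1) gives $\|f(t,y(t),y(a_0(t)))-f(t,z(t),z(a_0(t)))\|\leq 2L_f\|y-z\|_{\infty}$. For the first integral, the $(\mu_1,\tilde\mu_1)$-Lipschitz property of {\bf (H4)}, applied on the bounded set $\mathcal{B}$ and followed by $\|y-z\|_\infty$ majorization, yields
$$\Big\|\int_{-\infty}^{t}\big[C_1(t,s,y(s),y(a_1(s)))-C_1(t,s,z(s),z(a_1(s)))\big]\,ds\Big\|\leq 2\|y-z\|_{\infty}\int_{-\infty}^{t}\mu_1(t,s)\,ds\leq 2N_1\|y-z\|_{\infty}\; ,$$
and symmetrically the second integral is bounded by $2N_2\|y-z\|_{\infty}$ via the $\int_{t}^{+\infty}\mu_2(t,s)\,ds$ estimate in {\bf (H4)}. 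Summing the three pieces gives $\|Ty-Tz\|_{\infty}\leq 2(L_f+N_1+N_2)\|y-z\|_{\infty}=:L_T\|y-z\|_{\infty}$.

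Finally, hypothesis (\ref{eq27}) asserts exactly $L_T=2(L_f+N_1+N_2)<\varrho/(\varrho+\|y_0\|_{\infty})$, and we have verified $y_0=T(0)$ with $\|y_0\|_{\infty}\leq\varrho$; hence Theorem \ref{teos1} applies verbatim and produces a unique fixed point of $T$ in $\Delta_0$, that is, a unique almost automorphic solution of (\ref{eq5}) in $\Delta_0$. I expect the genuine obstacle to lie in the self-mapping step rather than in the contraction estimate: establishing the almost automorphic analogue of the composition Lemma \ref{l22} for the forcing term requires checking that the merely local Lipschitz condition (1) supplies the uniform continuity on the relatively compact range needed to pass through Definition \ref{def1}, while the invariance of the integral part leans on the full strength of the preliminary results of Section \ref{prelim} (Lemmas \ref{lem12}, \ref{lem13} and Corollary \ref{CorAA}).
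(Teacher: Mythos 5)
Your proposal is correct and follows essentially the same route as the paper: the same fixed-point operator, well-definedness via the composition lemma together with Corollary \ref{CorAA}, and the same contraction estimate with constant $2(L_f+N_1+N_2)$. The only cosmetic difference is that you invoke the packaged local fixed-point Theorem \ref{teos1} (whose hypothesis $L_T<\varrho/(\varrho+\|y_0\|_{\infty})$ is exactly (\ref{eq27})), whereas the paper unfolds that theorem's content by verifying $\Gamma(\Delta_0)\subseteq\Delta_0$ explicitly and then applying Banach's contraction principle directly.
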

\begin{proof}
Let us consider the operator $ \Gamma: AA(\R;\X) \to AA(\R;\X)$  defined  by 

\begin{equation}\label{OP01}
\Gamma y(t)= f(t,y(t),y(a_0(t)))+\int_{-\infty}^{t}C_1(t,s,y(s),y(a_1(s)))ds+\int_{t}^{+\infty}C_2(t,s,y(s),y(a_2(s)))ds\; .
\end{equation}
Since $AA(\R;\X)$ is a vector space and because of lemma \ref{l22} and corollary \ref{CorAA} we conclude that  $\Gamma $ is a well defined operator. Let us prove that $\Gamma (\Delta_0)\subseteq (\Delta_0)$; in fact, let $y\in \Delta_0$, then:
\begin{eqnarray*}
||\Gamma y(t)-y_0(t)||&\leq& ||f(t,y(t),y(a_0(t)))-f(t,0,0)||\\
&+&\int_{-\infty}^{t}||C_1(t,s,y(s),y(a_1(s)))-C_1(t,s,0,0)||ds\\
& +&\int_{t}^{+\infty}||C_2(t,s,y(s),y(a_2(s)))-C_2(t,s,0,0)||ds\\
&\leq& L_f(||y(t)||+||y(a_0(t))||)+ \int_{-\infty}^{t}\mu_1(t,s)(||y(s)||+||y(a_1(s))||)ds\\
&+& \int_{t}^{+\infty}\mu_2(t,s)(||y(s)||+||y(a_2(s))||)ds\\
%&+& L_{g_1}||C_1||_{L^1(\R^+)}+L_{g_2}||C_2||_{L^1(\R^-)})||y||_{\infty}\\
&\leq& \left(  L_f+ \int_{-\infty}^{t}\mu_1(t,s) ds+ \int_{t}^{+\infty}\mu_2(t,s)ds \right) 2||y||_{\infty}\\
&\leq & \left(  L_f+N_1+ N_2 \right) 2||y||_{\infty}\\
& < & \varrho.
\end{eqnarray*}
Now, let us consider $y_1,y_2 \in \Delta_0$, then:
\begin{eqnarray*}
||\Gamma y_1(t)-\Gamma
y_2(t)||&\leq&||f(t,y_1(t),y_1(a_0(t)))-f(t,y_2(t),y_2(a_0(t)))||\\
&+&\int_{-\infty}^{t}||C_1(t,s,y_1(s),y_1(a_1(s)))-C_1(t,s,y_2(s),y_2(a_1(s)))||ds\\
& +&\int_{t}^{+\infty}||C_2(t,s,y_1(s),y_1(a_2(s)))-C_2(t,s,y_2(s),y_2(a_2(s)))||ds\\
&\leq & L_f(||y_1(t)-y_2(t)||+||y_1(a_0(t))-y_2(a_0(t))||)+\\
&+& \int_{-\infty}^{t}\mu_1(t,s) \Big{( } ||y_1(s)-y_2(s)||+||y_1(a_1(s))-y_2(a_1(s)) \Big{)}ds \\
&+& \int_{t}^{+\infty}\mu_2(t,s) \Big{( } ||y_1(s)-y_2(s)||+||y_1(a_2(s))-y_2(a_2(s)) \Big{)}ds\\
&\leq& 2 (L_f+N_1+N_2)||y_1-y_2||_{\infty}\; ,
\end{eqnarray*}
and by the inequality (\ref{eq27}) we have that $\Gamma$ is contractive. Therefore, the Banach fixed point Theorem give us the existence and uniqueness of the solution.

\end{proof}
As a consequence of the previous theorem, we have the following
\begin{cor}\label{th23}
Suppose that condition {\bf (H1)} holds and $C_1:\mathbb{R}\times \mathbb{R} \times \X \times \X \to \X$ is a Bi-almost automorphic function that satisfy conditions {\bf (H3)} and {\bf (H4)}. Let $f\in AA(\R\times\X\times\X;\X)$,  $\varrho>0$ and 
$$\Delta_0=\{y \in AA(\R,\X):||y-y_0||_{\infty}\leq \varrho\},$$
 where:
$$y_0(t)= f(t,0,0)+\int_{-\infty}^{t}C_1(t,s,0,0)ds\; .$$
Additionally, $||y_0||_{\infty}\leq\varrho$ and the following conditions holds

\begin{enumerate}
\item There exists a positive constant $L_f$ such that for all $t \in \R$ and $x,y,x_1,y_1 \in \Delta_0$:
%\begin{itemize}
 $$||f(t,x(t),y(t))-f(t,x_1(t),y_1(t))||\leq L_f \left( ||x(t)-x_1(t)||+||y(t)-y_1(t)|| \right)\; .$$
\item The constants $\varrho,L_f,N_1$ satisfy the inequality
    %\begin{equation}\label{eq25}
    $$2(L_f+N_1)<\dfrac{\varrho}{\varrho + ||y_0||_{\infty}}.$$
    %\end{equation}
%\end{itemize}
\end{enumerate}
Then the integral equation (\ref{eq4})  has a unique almost automorphic solution in $\Delta_0$.
\end{cor}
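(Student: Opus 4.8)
The plan is to obtain this statement as the degenerate case $C_2 \equiv 0$ of Theorem \ref{th24}, so that no genuinely new argument is needed beyond verifying that a vanishing advanced kernel fits all the structural hypotheses imposed there. First I would record that the zero function $C_2 \equiv 0$ is trivially jointly continuous and Bi-almost automorphic, with limit function $\tilde C_2 \equiv 0$; it is $\lambda_2$-bounded with $\lambda_2 \equiv 0$, so that $\alpha_2 = 0$ in \textbf{(H3)}; and it is $(\mu_2,\tilde\mu_2)$-Lipschitz with $\mu_2 = \tilde\mu_2 \equiv 0$, whence $N_2 = 0$ in \textbf{(H4)}. In particular the advanced integral in (\ref{eq5}) vanishes identically and the requirement in \textbf{(H1)} for the index $i=2$ becomes vacuous, while the retarded deviation still obeys the composition hypothesis.

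With these identifications the data of the corollary coincide with those of Theorem \ref{th24}: the $y_0$ defined here is exactly the $y_0$ of the theorem once its $C_2$-term is suppressed; the Lipschitz hypothesis (1) on $f$ is identical; and the contraction inequality $2(L_f + N_1) < \varrho/(\varrho+\|y_0\|_{\infty})$ is precisely inequality (\ref{eq27}) specialized to $N_2 = 0$. Hence every hypothesis of Theorem \ref{th24} is in force, and invoking that theorem delivers a unique almost automorphic solution of (\ref{eq4}) in $\Delta_0$.

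For completeness one could instead argue directly, repeating the proof of Theorem \ref{th24} with the simpler operator
$$\Gamma y(t) = f(t,y(t),y(h_0(t))) + \int_{-\infty}^{t} C_1(t,s,y(s),y(h_1(s)))\,ds.$$
Here Lemma \ref{l22} keeps the term $f(\cdot,y(\cdot),y(h_0(\cdot)))$ in $AA(\R;\X)$ (via \textbf{(H1)}), and Lemma \ref{lem12} does the same for the retarded integral under \textbf{(H3)}--\textbf{(H4)}; since $AA(\R;\X)$ is a vector space, $\Gamma$ is a well-defined self-map. The self-mapping bound $\Gamma(\Delta_0)\subseteq\Delta_0$ and the contraction bound $\|\Gamma y_1 - \Gamma y_2\|_{\infty} \le 2(L_f+N_1)\|y_1-y_2\|_{\infty}$ follow from the same term-by-term estimates as before, now with no advanced integral to control, and the Banach fixed point theorem (equivalently Theorem \ref{teos1}) closes the argument.

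I do not expect any real obstacle: the only point requiring care is the bookkeeping that dropping $C_2$ forces $N_2 = \alpha_2 = 0$, so that inequality (\ref{eq27}) degenerates correctly into the stated contraction condition. Everything else is inherited verbatim from the proof of Theorem \ref{th24}.
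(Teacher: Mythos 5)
Your proposal is correct and matches the paper's approach: the paper presents this corollary as an immediate consequence of Theorem \ref{th24}, exactly the specialization $C_2\equiv 0$ (hence $\alpha_2=N_2=0$) that you carry out, with no separate proof given. Your additional direct argument via the simplified operator is a fine cross-check but not needed.
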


%\paragraph{Demostración.}Consideremos el operador:
% \begin{eqnarray}\label{EQID0}
% \Gamma :AA(\R,\X) &\to& AA(\R,\X)\\
% y &\to& \Gamma y(t)=f(t,y(t))+\int_{-\infty}^{t}C(t-s)g(s,y(s))ds.\nonumber
% \end{eqnarray}
% Por la observación \ref{OBS1} y el lema \ref{lem12} tenemos que  $\Gamma$ es un operador bien definido. Probemos que
%$\Gamma (\Delta_0) \subseteq \Delta_0$; en efecto, sea $y \in \Delta_0$ luego:
%\begin{eqnarray*}
%||\Gamma y(t)-y_0(t)||&\leq&||f(t,y(t))-f(t,0)||+\int_{-\infty}^{t}|C(t-s)|||g(s,y(s))-g(s,0)||ds\\
%&\leq&L_f||y||_{\infty}+L_g||C||_{L^1(\R^+)}||y||_{\infty}\\
%&\leq&(L_f+L_g||C||_{L^1(\R^+)})(||y-y_0||_{\infty}+||y_0||_{\infty})\\
%&\leq&(L_f+L_g||C||_{L^1(\R^+)})(\varrho+||y_0||_{\infty})\\
%&\leq&\varrho .
%\end{eqnarray*}
%Ahora probaremos que $\Gamma$ es un operador contractivo sobre $\Delta_0$. En efecto, sean $y_1,y_2 \in \Delta_0$,
%luego:
%\begin{eqnarray*}
%||\Gamma y_1(t)-\Gamma
%y_2(t)||&\leq&||f(t,y_1(t))-f(t,y_2(t))||+\int_{-\infty}^{t}|C(t-s)|||g(s,y_1(s))-g(s,y_2(s))||ds\\
%&\leq&(L_f+L_g||C||_{L^1(\R^+)})||y_1-y_2||_{\infty}.
%\end{eqnarray*}
%De esto y (\ref{eq25}) se sigue que $\Gamma$ es un operador contractivo, entonces el teorema de punto fijo de Banach
%asegura que la ecuación integral (\ref{eq4}) tiene una única solución en $AA(\R;\X)$. $\square$\\
%
%
%

%Nuevamente como se hizo para el Teorema \ref{th22}, se puede realizar la misma reflexión para el siguiente teorema con
%respecto al Teorema \ref{th24} y lo mismo sobre la generalización a los Teoremas de \cite{16}.\\

Now we improve results in the spirit of theorem \ref{teos2} to the equations (\ref{eq5}) and (\ref{eq4}).

\begin{thm}Suppose that condition {\bf (H1)} holds, $C_1, C_2:\mathbb{R}\times \mathbb{R} \times \X \times \X \to \X$ are Bi-almost automorphic functions  that satisfy condition {\bf (H3)} and {\bf (H4)}. Let  $\Gamma$ be the operator defined in (\ref{OP01}), $f\in AA(\R\times\X\times\X;\X)$, $\rho>0$ and $\Delta_0=\{y\in AA(\R;\X): ||y-y_0||_{\infty}\leq\rho\}\; $, where 
$$y_0(t)=f(t,0,0)+\int_{-\infty}^{t}C_1(t,s,0,0)ds + \int_{t}^{+\infty}C_2(t,s,0,0)ds\; .$$
Also, for all $t \in \R$ and $x,y,x_1,y_1 \in \Delta_0$ we have:
$$||f(t,x(t),y(t))-f(t,x_1(t),y_1(t))||\leq L_f \left( ||x(t)-x_1(t)||+||y(t)-y_1(t)|| \right)\; .$$
Suppose that we have the inequality:
$$0<\theta=\left(1-2(L_f+N_1+N_2)\right )^{-1}||\Gamma y_0-y_0||_{\infty}\leq\rho\; ,$$
with $2(L_f+N_1+N_2)<1.$ Then the equation (\ref{eq5}) has a unique solution
$y\in AA(\R;\X)$ such that $||y-y_0||_{\infty}\leq \rho.$
\end{thm}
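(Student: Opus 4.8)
The plan is to read this statement as a direct application of the abstract fixed point Theorem \ref{teos2} to the operator $\Gamma$ of (\ref{OP01}), now regarded as a self-map of the Banach space $AA(\R;\X)$. Everything reduces to checking the three ingredients required by Theorem \ref{teos2}: that $\Gamma$ maps $AA(\R;\X)$ into itself, that $y_0=\Gamma(0)$, and that $\Gamma$ is Lipschitz on $\Delta_0$ with constant $L_T=2(L_f+N_1+N_2)<1$.

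First I would record that $\Gamma$ is well defined on $AA(\R;\X)$ and leaves this space invariant. This is not new: condition {\bf (H1)} guarantees that $y(a_i(\cdot))\in AA(\R;\X)$ whenever $y\in AA(\R;\X)$, Lemma \ref{l22} handles the composition with $f$, and Corollary \ref{CorAA} (built on Lemmas \ref{lem12} and \ref{lem13}) shows that the two integral terms again produce almost automorphic functions; since $AA(\R;\X)$ is a vector space, $\Gamma y\in AA(\R;\X)$. Evaluating $\Gamma$ at the zero function gives precisely $\Gamma(0)(t)=f(t,0,0)+\int_{-\infty}^{t}C_1(t,s,0,0)\,ds+\int_{t}^{+\infty}C_2(t,s,0,0)\,ds=y_0(t)$, so the center of the ball $\Delta_0$ is indeed $T(0)$ in the notation of Theorem \ref{teos2}.

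Next I would establish the contraction estimate on $\Delta_0$. This is the same computation already carried out in the proof of Theorem \ref{th24}: for $y_1,y_2\in\Delta_0$ one bounds the three differences using the Lipschitz hypothesis on $f$ and the $(\mu_i,\tilde\mu_i)$-Lipschitz property of $C_i$ from {\bf (H4)}, then uses that $\sup_{t}\|y_1(a_i(t))-y_2(a_i(t))\|\le\|y_1-y_2\|_{\infty}$ together with the integral bounds $N_1,N_2$ of {\bf (H4)} to obtain
$$\|\Gamma y_1-\Gamma y_2\|_{\infty}\le 2(L_f+N_1+N_2)\,\|y_1-y_2\|_{\infty}\,.$$
Thus $\Gamma$ is Lipschitz on $\Delta_0$ with constant $L_T=2(L_f+N_1+N_2)$, which by hypothesis satisfies $L_T<1$.

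With these three facts verified, the conclusion is immediate from Theorem \ref{teos2}: since $y_0=\Gamma(0)$, $L_T<1$, and $0<\theta=(1-L_T)^{-1}\|\Gamma y_0-y_0\|_{\infty}\le\rho$, that theorem yields a unique fixed point of $\Gamma$ in $\Delta_0$, i.e. a unique solution $y\in AA(\R;\X)$ of (\ref{eq5}) with $\|y-y_0\|_{\infty}\le\rho$. I do not expect a genuine obstacle here; the only point requiring care is that this version uses the weaker ball condition of Theorem \ref{teos2} ($\theta\le\rho$ together with $L_T<1$) rather than the self-mapping inequality $2(L_f+N_1+N_2)<\rho/(\rho+\|y_0\|_{\infty})$ used in Theorem \ref{th24}. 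Consequently one should \emph{not} try to prove $\Gamma(\Delta_0)\subseteq\Delta_0$ directly — the invariance of $\Delta_0$ under the iterates is instead furnished internally by Theorem \ref{teos2} through the estimate $\theta\le\rho$.
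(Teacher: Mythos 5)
Your proposal is correct and takes essentially the same route as the paper: the paper's proof is exactly the verification of the hypotheses of Theorem~\ref{teos2} for $\Gamma$ on $AA(\R;\X)$, resting on the identical Lipschitz estimate $\|\Gamma y_1-\Gamma y_2\|_{\infty}\le 2(L_f+N_1+N_2)\|y_1-y_2\|_{\infty}$ on $\Delta_0$. The only cosmetic difference is that the paper inlines the fixed-point argument instead of citing Theorem~\ref{teos2} as a black box — it shows directly that $\Gamma$ maps the smaller ball $\mathfrak{B}(y_0,\theta)$ into itself and is contractive there — which is consistent with your (correct) closing remark that one should not attempt to prove $\Gamma(\Delta_0)\subseteq\Delta_0$ for the full ball of radius $\rho$.
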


\begin{proof}
Let us consider the closed ball
%La demostración se sigue del Teorema  (\ref{teos2}); en efecto, sea
$$\mathfrak{B}=\mathfrak{B}(y_0,\theta)=\{z\in AA(\R;\X): ||z-y_0||\leq\theta\}\, .$$
Let $z\in \mathfrak{B}(y_0,\theta)$, then we have:
\begin{eqnarray*}
||(\Gamma z)(t)-y_0(t)||&\leq&||(\Gamma z)(t)-(\Gamma y_0)(t)||+||(\Gamma y_0)(t)-y_0(t)||\\
&\leq&||f(t,z(t),z(a_0(t)))-f(t,y_0(t),y_0(a_0(t)))||+\\
&+&\int_{-\infty}^{t}||C_1(t,s,z(s),z(a_1(s)))-C_1(t,s,y_0(s),y_0(a_1(s)))||ds\\
& +&\int_{t}^{+\infty}||C_2(t,s,z(s),z(a_2(s)))-C_2(t,s,y_0(s),y_0(a_2(s)))||ds\\
& + & ||(\Gamma y_0)(t)-y_0(t)||\\
&\leq & L_f(||z(s)-y_0(s)||+||z(a_0(s))-y_0(a_0(s))||)+\\
&+& \int_{-\infty}^{t}\mu_1(t,s) \Big{( } ||z(s)-y_0(s)||+||z(a_1(s))-y_0(a_1(s))|| \Big{)}ds \\
&+& \int_{t}^{+\infty}\mu_2(t,s) \Big{( } ||z(s)-y_0(s)||+||z(a_2(s))-y_0(a_2(s))|| \Big{)}ds\\
&\leq& 2 (L_f+N_1+N_2)||z-y_0||_{\infty}+ ||\Gamma y_0-y_0||_{\infty}\\
%&\leq&(L_f+L_{g_1}||C_1||_{L^1(\R^+)}+L_{g_2}||C_2||_{L^1(\R^-)})||z-y_0||_{\infty}+||\Gamma y_0-y_0||_{\infty}\\
&\leq&\theta.
\end{eqnarray*}
This computation means that $\Gamma (\mathfrak{B})\subseteq \mathfrak{B}$. On the other hand, we have that the desired Lipschitz's constant is 
$ L_{\Gamma}=2 (L_f+N_1+N_2)$ .
\end{proof}
In case $C_2\equiv 0$, the operator $\Gamma$ defined in (\ref{OP01}), becomes the following
\begin{equation}\label{OP02}
\Gamma y(t)= f(t,y(t),y(a_0(t)))+\int_{-\infty}^{t}C_1(t,s,y(s),y(a_1(s)))ds\; .
\end{equation}
In this way, we obtain the corollary that follows
\begin{cor}Suppose that condition {\bf (H1)} holds and $C_1:\mathbb{R}\times \mathbb{R} \times \X \times \X \to \X$ is a Bi-almost automorphic function which satisfy conditions {\bf (H3)} and {\bf (H4)}. Let $\Gamma$ be the operator defined in (\ref{OP02}), $f\in AA(\R\times\X\times\X;\X)$, $\rho>0$ and $\Delta_0=\{y\in AA(\R;\X): ||y-y_0||_{\infty}\leq\rho\}$, where 
$$y_0(t)=f(t,0,0)+\int_{-\infty}^{t}C_1(t,s,0,0)ds\; ;$$
also, for all $t \in \R$ and $x,y,x_1,y_1 \in \Delta_0$ we have:
$$||f(t,x(t),y(t))-f(t,x_1(t),y_1(t))||\leq L_f \left( ||x(t)-x_1(t)||+||y(t)-y_1(t)|| \right)\; .$$
Suppose that $0<\theta=(1-2(L_f+N_1))^{-1}||\Gamma y_0-y_0||_{\infty}\leq\rho,$
with $2(L_f+N_1)<1$, then the equation (\ref{eq4}) has a unique solution $y\in AA(\R;\X)$ such that
$||y-y_0||_{\infty}\leq \rho.$\\
\end{cor}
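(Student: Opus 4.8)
The plan is to recognize this corollary as the $C_2\equiv 0$ specialization of the preceding theorem and to apply the abstract fixed point result Theorem \ref{teos2} directly, with $T=\Gamma$. First I would observe that when $C_2\equiv 0$ the operator (\ref{OP01}) collapses to (\ref{OP02}), and that this $\Gamma$ maps $AA(\R;\X)$ into itself: the term $f(t,y(t),y(a_0(t)))$ stays almost automorphic by condition {\bf (H1)} together with the composition Lemma \ref{l22}, while the integral term $\int_{-\infty}^t C_1(t,s,y(s),y(a_1(s)))\,ds$ stays almost automorphic by Lemma \ref{lem12}. Thus $\Gamma\colon AA(\R;\X)\to AA(\R;\X)$ is well defined, and by construction $y_0=\Gamma(0)$, which matches the hypothesis $y_0=T(0)$ of Theorem \ref{teos2}.

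Next I would establish the contraction estimate on $\Delta_0$. For $y_1,y_2\in\Delta_0$, using the Lipschitz hypothesis on $f$ for the first term and the $(\mu_1,\tilde\mu_1)$-Lipschitz condition {\bf (H4)} for the kernel, one obtains pointwise
$$\|\Gamma y_1(t)-\Gamma y_2(t)\|\leq L_f\big(\|y_1(t)-y_2(t)\|+\|y_1(a_0(t))-y_2(a_0(t))\|\big)+\int_{-\infty}^t\mu_1(t,s)\big(\|y_1(s)-y_2(s)\|+\|y_1(a_1(s))-y_2(a_1(s))\|\big)\,ds.$$
Bounding each difference by $\|y_1-y_2\|_{\infty}$ and invoking $\sup_{t\in\R}\int_{-\infty}^t\mu_1(t,s)\,ds=N_1$ from {\bf (H4)} yields $\|\Gamma y_1-\Gamma y_2\|_{\infty}\leq 2(L_f+N_1)\|y_1-y_2\|_{\infty}$, so $\Gamma$ is Lipschitz on $\Delta_0$ with constant $L_{\Gamma}=2(L_f+N_1)$, which is strictly less than $1$ by hypothesis.

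Finally, I would apply Theorem \ref{teos2} with $L_T=L_{\Gamma}$. The standing assumption $0<\theta=(1-2(L_f+N_1))^{-1}\|\Gamma y_0-y_0\|_{\infty}\leq\rho$ is exactly the quantitative condition $0<\theta=(1-L_T)^{-1}\|Ty_0-y_0\|\leq\rho$ required there, so the theorem produces a unique fixed point $y\in\Delta_0$ of $\Gamma$, which is precisely the unique almost automorphic solution of (\ref{eq4}) with $\|y-y_0\|_{\infty}\leq\rho$. I do not expect any genuine obstacle here, since the argument merely transports the previous proof; the only point requiring care is checking that $y(a_0(\cdot))$ and $y(a_1(\cdot))$ remain almost automorphic, hence have relatively compact range, so that the bounded set $\mathcal{B}$ on which {\bf (H4)} is posed actually contains the relevant values. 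This is guaranteed by {\bf (H1)}.
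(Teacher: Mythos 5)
Your proposal is correct and takes essentially the same route as the paper: the corollary is stated there as the $C_2\equiv 0$ specialization of the preceding theorem, whose proof likewise verifies that $\Gamma$ (well defined on $AA(\R;\X)$ via Lemma \ref{l22} and Lemma \ref{lem12}) maps the ball around $y_0$ into itself and is Lipschitz with constant $2(L_f+N_1+N_2)$ --- here $2(L_f+N_1)$ --- so that the abstract fixed point result of Theorem \ref{teos2} applies. Your verification of the hypotheses, including $y_0=\Gamma(0)$ and the role of {\bf (H1)} and {\bf (H4)} in the contraction estimate, matches the paper's argument.
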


\noindent Let us assume the following conditions:\\

\begin{enumerate}
%\noindent 
\item [$(K_1):$] $f \in AA(\R\times \X;\X)$ and there exist a continuous and bounded functions $L_f:
\R^+ \to \R^+$,  such that: for all $r>0$ and for all  $ x,y,x_1,y_1 \in B(0,r)=\{x\in \X: ||x||\leq r\}$ we have:
    $$||f(t,x,x_1)-f(t,y,y_1)||\leq L_f(r)\left( ||x-y||+||x_1-y_1|| \right)\; .$$
    % \  ||g_i(t,x)-g_i(t,y)||\leq L_{g_i}(r)||x-y||, $$
%${\rm for}\  i=1,2.$

%$f,g_i  \in AA(\R\times \X;\X)$ and there exist continuous and bounded  functions $L_f,L_{g_i}:
%\R^+ \to \R^+$,  such that: for alla $r>0$ and fot alla  $x,y \in \X$, with $ ||x||,||y||\leq r$ we have:
%    $$||f(t,x)-f(t,y)||\leq L_f(r)||x-y||,\ \  ||g_i(t,x)-g_i(t,y)||\leq L_{g_i}(r)||x-y||, $$
%${\rm for}\  i=1,2.$
   
\noindent \item [$(K_2):$]  $\displaystyle\sup_{r>0}\left( r-2rL_f(r)- 2rN_1-2rN_2 \right) > \displaystyle\sup_{t \in \R}||f(t,0,0)||+\alpha_1 +\alpha_2\; .$
%$\sup_{r>0}\left( r-2rL_f(r)- 2rN_1-2rN_2 \right)> \displaystyle\sup_{t \in \R}||f(t,0,0)||+\displaystyle\sup_{t \in \R}\displaystyle\int_{-\infty}^t ||%C_1(t,s,0,0)||ds+\displaystyle\sup_{t \in \R}\displaystyle\int_t^{+\infty} ||C_2(t,s,0,0)||ds$
%$$||C_1||_{L^1(\R^+)}\sup_{s\in\R}||g_1(s,0)||+||C_2||_{L^1(\R^-)}\sup_{s\in\R}||g_2(s,0)||+\sup_{s\in\R}||f(s,0)||.$$
%$C_1\in L^1(\R^+), C_2\in L^1(\R^-)$ and further:
%\begin{eqnarray*}
% H_2: \sup_{r>0}\left( r-2rL_f(r)- 2rN_1-2rN_2 \right)&>& \displaystyle\sup_{t \in \R}||f(t,0,0)||+\alpha_1 +\alpha_2.
%\end{eqnarray*}
%&& \sup_{r>0}(r-rL_f(r)-rL_{g_1}(r)||C_1||_{L^1(\R^+)}-rL_{g_2}(r)||C_2||_{L^1(\R^-)})>\\
%&&||C_1||_{L^1(\R^+)}\sup_{s\in\R}||g_1(s,0)||+||C_2||_{L^1(\R^-)}\sup_{s\in\R}||g_2(s,0)||+\sup_{s\in\R}||f(s,0)||.
\end{enumerate}

%\end{itemize}
With this new conditions, we have the following Theorem:
\begin{thm}\label{th25}
Suppose that condition {\bf (H1)} holds and $C_1,C_2:\mathbb{R}\times \mathbb{R} \times \X \times \X \to \X$ are Bi-almost automorphic function that satisfy conditions {\bf (H3)} and {\bf (H4)} and further that conditions $(K_1)$ and $(K_2)$ holds. Then the integral equation (\ref{eq5}) has a unique
almost automorphic solution.
\end{thm}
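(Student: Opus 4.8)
The plan is to run the Banach fixed point theorem, but now on a closed ball centered at the \emph{origin} whose radius is supplied by condition $(K_2)$, instead of the ball $\Delta_0$ centered at $y_0=\Gamma(0)$ used in Theorem \ref{th24}. First I would take the operator $\Gamma:AA(\R;\X)\to AA(\R;\X)$ defined in (\ref{OP01}); by Lemma \ref{l22} (for the $f$-term) and Corollary \ref{CorAA} (for the two integral terms), together with (H1), it is a well defined self-map of $AA(\R;\X)$. Since the supremum in $(K_2)$ is strictly larger than the right-hand side, I would fix a radius $r^\ast>0$ with
\begin{equation*}
r^\ast-2r^\ast L_f(r^\ast)-2r^\ast N_1-2r^\ast N_2>\sup_{t\in\R}\|f(t,0,0)\|+\alpha_1+\alpha_2 ,
\end{equation*}
and work inside $\mathfrak{B}_{r^\ast}=\{y\in AA(\R;\X):\|y\|_\infty\leq r^\ast\}$, which is complete, being a closed subset of the Banach space $AA(\R;\X)$.

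Next I would check the self-mapping property $\Gamma(\mathfrak{B}_{r^\ast})\subseteq\mathfrak{B}_{r^\ast}$. For $y\in\mathfrak{B}_{r^\ast}$ every argument $y(t)$ and $y(a_i(t))$ lies in $B(0,r^\ast)$, so $(K_1)$ gives $\|f(t,y(t),y(a_0(t)))\|\leq 2r^\ast L_f(r^\ast)+\sup_t\|f(t,0,0)\|$. The $\lambda_i$-boundedness from (H3) bounds $\|C_i(t,s,0,0)\|$ by $\lambda_i(t,s)$, while the Lipschitz estimates from (H4) control the increments, so that
\begin{equation*}
\int_{-\infty}^{t}\|C_1(t,s,y(s),y(a_1(s)))\|\,ds\leq 2r^\ast N_1+\alpha_1 ,
\end{equation*}
and symmetrically the $C_2$-integral is bounded by $2r^\ast N_2+\alpha_2$. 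Summing the three contributions and invoking the defining inequality for $r^\ast$ yields $\|\Gamma y\|_\infty\leq r^\ast$.

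For the contraction step I would take $y_1,y_2\in\mathfrak{B}_{r^\ast}$; again all arguments remain in $B(0,r^\ast)$, so $(K_1)$ and (H4) give
\begin{equation*}
\|\Gamma y_1-\Gamma y_2\|_\infty\leq 2\bigl(L_f(r^\ast)+N_1+N_2\bigr)\|y_1-y_2\|_\infty .
\end{equation*}
The point I expect to be the main (if subtle) observation is that the single hypothesis $(K_2)$ does double duty: because its right-hand side $\sup_t\|f(t,0,0)\|+\alpha_1+\alpha_2$ is nonnegative, the self-mapping inequality forces $r^\ast\bigl(1-2L_f(r^\ast)-2N_1-2N_2\bigr)>0$, and since $r^\ast>0$ this is exactly $2(L_f(r^\ast)+N_1+N_2)<1$. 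Hence $\Gamma$ is a contraction on $\mathfrak{B}_{r^\ast}$, and the Banach fixed point theorem delivers a unique fixed point in $\mathfrak{B}_{r^\ast}$, which is the sought almost automorphic solution of (\ref{eq5}). The only delicate bookkeeping is to keep the $r$-dependent constant $L_f(r)$ attached to the correct radius throughout, i.e.\ to confirm that each evaluation of $y$ and of $y\circ a_i$ stays inside $B(0,r^\ast)$; this holds precisely because $\|y\|_\infty\leq r^\ast$, which is what makes the radius-dependent Lipschitz condition $(K_1)$ usable in place of the global constant of Theorem \ref{th24}.
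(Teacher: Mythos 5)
Your proposal is correct and follows essentially the same route as the paper: the paper's proof also applies the Banach fixed point theorem on the ball $\Omega_0=\{y\in AA(\R;\X):\|y\|_\infty\leq R\}$ centered at the origin, with $R$ chosen from $(K_2)$ exactly as your $r^\ast$, and derives both the invariance $\Gamma(\Omega_0)\subseteq\Omega_0$ and the contraction constant $2(L_f(R)+N_1+N_2)<1$ from that single inequality, just as you observe. The estimates you give for the $f$-term and the two integral terms match the paper's line by line.
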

\paragraph{Proof.} 
\begin{proof}
Let us consider the operator $\Gamma : AA(\R;\X)\to AA(\R;\X)$ defined in (\ref{OP01}). Since $AA(\R;\X)$ is a vector space and because of lemma \ref{l22} and corollary \ref{CorAA} we conclude that  $\Gamma $ is a well defined operator.
%\begin{eqnarray*}
%\Gamma: AA(\R;\X) &\to& AA(\R;\X)\\
%\Gamma y(t)&=& f(t,y(t))+\int_{-\infty}^{t}C_1(t-s)g_1(s,y(s))ds+\int_{t}^{+\infty}C_2(t-s)g_2(s,y(s))ds.
%\end{eqnarray*}
%By the Lemmas \ref{OBS1}, \ref{lem12} amd \ref{lem13} we conclude that $\Gamma $ is a well defined
%operator.\\
The condition  $(H_2)$ implies the existence of a real number $R>0$ such that:
\begin{eqnarray}\label{eq28}
R-2RL_f(R)- 2RN_1-2RN_2  > \displaystyle\sup_{t \in \R}||f(t,0,0)||+\alpha_1 +\alpha_2
\end{eqnarray}
Let us consider the set $\Omega_0=\{y \in AA(\R;\X): ||y||_{\infty}\leq R\}.$
We need to prove that $\Gamma (\Omega_0) \subseteq \Omega_0$ and that $\Gamma$ is contractive, in fact:\\
1). Let $y \in \Omega_0$, then:
\begin{eqnarray*}
||\Gamma y(t)||&\leq&||f(t,y(t),y(a_0(t)))-f(t,0,0)||+||f(t,0,0)||+\\
&+&\int_{-\infty}^{t}||C_1(t,s,y(s),y(a_1(s)))-C_1(t,s,0,0)||ds+\int_{-\infty}^{t}||C_1(t,s,0,0)||ds+\\
& +&\int_{t}^{+\infty}||C_2(t,s,y(s),y(a_2(s)))-C_2(t,s,0,0)||ds+\int_{t}^{+\infty}||C_2(t,s,0,0)||]ds\\
%&\leq& L_f(R)||y||_{\infty}+||f(t,0)||+\int_{-\infty}^{t}|C_1(t-s)|[ L_{g_1}(R)||y||_{\infty}+||g_1(s,0)||]ds\\
%& +&\int_{t}^{+\infty}|C_2(t-s)|[L_{g_2}(R)||y||_{\infty}+||g_2(s,0)||]ds\\
&\leq& 2R L_f(R)+\sup_{s\in\R}||f(s,0,0)||+2R N_1+ 2RN_2+\alpha_1 +\alpha_2 \\
%&+ &R L_{g_2}(R)||C_2||_{L^1(\R^-)}+ ||C_2||_{L^1(\R^-)}\sup_{s\in\R}||g_2(s,0)||\\
&\leq&R.
\end{eqnarray*}
The last inequality is justified by (\ref{eq28}).\\
2). Let $y_1,y_2 \in \Omega_0 $, then:
\begin{eqnarray*}
||\Gamma y_1(t)-\Gamma
y_2(t)||&\leq&||f(t,y_1(t),y_1(a_0(t)))-f(t,y_2(t),y_2(a_0(t)))||+\\
&+&\int_{-\infty}^{t}||C_1(t,s,y_1(s),y_1(a_1(s)))-C_1(t,s,y_2(s),y_2(a_1(s)))||ds+\\
& +&\int_{t}^{+\infty}||C_2(t,s,y_1(s),y_1(a_2(s)))-C_2(t,s,y_2(s),y_2(a_2(s)))||ds\\
&\leq& \big{(} 2L_f(R)+ 2N_1+2N_2 \big{)}||y_1-y_2||_{\infty}.
\end{eqnarray*}
From the inequality (\ref{eq28}) we have
$$R-2RL_f(R)-2RN_1-2RN_2>0,$$
then:
$$1>2L_f(R)+ 2N_1+2N_2.$$
Therefore $\Gamma$ is a contractive operator, consequently has a unique fixed point.
\end{proof}
\begin{cor}
Suppose that condition {\bf (H1)} holds and $C_1,C_2:\mathbb{R}\times \mathbb{R} \times \X \times \X \to \X$ are Bi-almost automorphic function that satisfy conditions {\bf (H3)} and {\bf (H4)} and further that conditions $(K_1)$ holds with $L_f(\cdot)=L_f$ a positive constant. Moreover, the inequality 
$$2 (L_f+N_1+N_2)<1\; ,$$ 
holds. Then the integral equation (\ref{eq5}) has a unique almost automorphic solution.
\end{cor}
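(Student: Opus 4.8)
The plan is to deduce this corollary directly from Theorem \ref{th25} by checking that its hypothesis $(K_2)$ holds automatically under the present, simpler assumptions. First I would specialize condition $(K_2)$ to the case in which $L_f(\cdot)=L_f$ is a constant. Then the quantity inside the supremum on the left-hand side simplifies to
$$r-2rL_f-2rN_1-2rN_2=r\bigl(1-2(L_f+N_1+N_2)\bigr)\; .$$
By hypothesis the coefficient $1-2(L_f+N_1+N_2)$ is strictly positive, so letting $r\to+\infty$ shows that $\displaystyle\sup_{r>0}\bigl(r-2rL_f-2rN_1-2rN_2\bigr)=+\infty$.

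Next I would observe that the right-hand side of $(K_2)$ is finite: the quantities $\alpha_1,\alpha_2$ are finite by condition {\bf (H3)}, and since $f\in AA(\R\times\X\times\X;\X)$ the slice $f(\cdot,0,0)$ is bounded (almost automorphic functions have relatively compact, hence bounded, range), so $\displaystyle\sup_{t\in\R}\|f(t,0,0)\|<+\infty$. Consequently the strict inequality demanded by $(K_2)$ holds trivially, since $+\infty$ exceeds any finite number, and condition $(K_2)$ is verified.

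Having confirmed both $(K_1)$ (assumed) and $(K_2)$ (just verified), Theorem \ref{th25} applies verbatim and yields a unique almost automorphic solution of (\ref{eq5}). Concretely, the positivity of the coefficient guarantees that one may choose $R>0$ large enough for inequality (\ref{eq28}) to hold; the ball $\Omega_0=\{y\in AA(\R;\X):\|y\|_{\infty}\leq R\}$ is then invariant under the operator $\Gamma$ of (\ref{OP01}), and $\Gamma$ is a contraction with constant $2(L_f+N_1+N_2)<1$, so the Banach fixed point theorem delivers the unique fixed point.

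I expect no genuine obstacle here: the corollary is essentially a bookkeeping consequence of Theorem \ref{th25}, the constancy of $L_f$ being precisely what turns the nonlinear balancing condition $(K_2)$ into the single linear inequality $2(L_f+N_1+N_2)<1$. The only point requiring (routine) care is the finiteness of the right-hand side of $(K_2)$, which follows at once from the boundedness of almost automorphic functions together with condition {\bf (H3)}.
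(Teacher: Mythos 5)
Your proposal is correct and follows essentially the same route as the paper's own proof: both verify that the right-hand side of $(K_2)$ is finite (via boundedness of the almost automorphic slice $f(\cdot,0,0)$ and condition \textbf{(H3)}) and that the positivity of $1-2(L_f+N_1+N_2)$ makes the left-hand side arbitrarily large, so that condition $(K_2)$ (equivalently, inequality (\ref{eq28}) for all large $R$) holds and Theorem \ref{th25} applies.
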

\begin{proof}
Consider the operator $\Gamma : AA(\R;\X)\to AA(\R;\X)$ defined in (\ref{OP01}). Since $AA(\R;\X)$ is a vector space and because of lemma \ref{l22} and corollary \ref{CorAA} we conclude that  $\Gamma $ is a well defined operator. Moreover, because every almost automorphic function is bounded and also by hypothesis, we can conclude the following inequality:
$$\alpha_1+\alpha_2+\sup_{s\in\R}||f(s,0,0)||<+\infty\; .$$
Also by hypothesis we have:
$$1-2(L_f+N_1+N_2)>0\, .$$
Therefore, there exist a real number $R_0>0$ such that for all $R\geq R_0$ we have
\begin{eqnarray*}%%\label{eq28}
R-2R(L_f+N_1+N_2) > \sup_{s\in\R}||f(s,0,0)||+\alpha_1+\alpha_2\; .
%&+&||C_2||_{L^1(\R^-)}\sup_{s\in\R}||g_2(s,0)||+\sup_{s\in\R}||f(s,0)||.\nonumber
\end{eqnarray*}
Now, it is clear that the conclusion follows from theorem \ref{th25}.
\end{proof}
When we have the most simple equation (\ref{eq4}), it is possible to give the precise conditions in order to deduce the existence of its unique almost automorphic solution. For that, let us consider the following conditions:\\
\begin{itemize}
\item [$(k_1):$] $f \in AA(\R \times \X,\X)$ and there exists $L_f:\R^+ \to \R^+$ a continuous and bounded functions such that for all  $ r >0$ and for all $ x,y,x_1,y_1 \in  B(0,r)=\{x\in \X: ||x||\leq r\} $ we have:
    $$||f(t,x,x_1)-f(t,y,y_1)||< L_f(r)\left( ||x-y||+||x_1-y_1|| \right).$$ 
\item [$(k_2):$] $\displaystyle\sup_{r>0}\left(r-2rL_f(r)-2rN_1\right)>
\alpha_1+\sup_{s\in\R}||f(s,0,0)||.$
\end{itemize}
 
\begin{thm}\label{lk} 
Suppose that condition {\bf (H1)} holds and $C_1:\mathbb{R}\times \mathbb{R} \times \X \times \X \to \X$ is a Bi-almost automorphic function that satisfy conditions {\bf (H3)} and {\bf (H4)} and further that conditions $(k_1)$ and $(k_2)$ holds. Then, the integral equation (\ref{eq4}) has a unique almost automorphic solution.
\end{thm}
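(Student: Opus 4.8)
The plan is to mirror the proof of Theorem \ref{th25}, specializing to the case $C_2\equiv 0$ (so that the constants $N_2$ and $\alpha_2$ drop out), and to run the variable-radius fixed point argument rather than the fixed-ball argument of Theorem \ref{th24}. First I would consider the operator $\Gamma:AA(\R;\X)\to AA(\R;\X)$ defined in (\ref{OP02}),
$$\Gamma y(t)= f(t,y(t),y(a_0(t)))+\int_{-\infty}^{t}C_1(t,s,y(s),y(a_1(s)))ds\; ,$$
and check that it is well defined. Since $AA(\R;\X)$ is a vector space, this reduces to two facts already available: the composition $t\mapsto f(t,y(t),y(a_0(t)))$ lies in $AA(\R;\X)$ by condition $(k_1)$, condition {\bf (H1)}, and the composition Lemma \ref{l22} (applied with vanishing ergodic component, so that an $AA$ function is viewed as an $AAA$ function); and the integral term lies in $AA(\R;\X)$ by Lemma \ref{lem12}, whose hypotheses {\bf (H1)}, {\bf (H3)} and {\bf (H4)} are exactly those assumed here.

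Next I would extract the working radius. From condition $(k_2)$, together with the continuity and boundedness of $L_f$ provided by $(k_1)$, the supremum on the left of $(k_2)$ exceeds $\alpha_1+\sup_{s\in\R}\|f(s,0,0)\|$, so there exists $R_0>0$ with
\begin{equation*}
R_0-2R_0L_f(R_0)-2R_0N_1 > \alpha_1+\sup_{s\in\R}\|f(s,0,0)\|\; .
\end{equation*}
I then set $\Omega_0=\{y\in AA(\R;\X):\|y\|_{\infty}\le R_0\}$, which is a closed ball in the Banach space $AA(\R;\X)$.

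Then I would verify the two hypotheses of Banach's fixed point theorem on $\Omega_0$. For the self-map property, given $y\in\Omega_0$ I would split each term as its value at $y$ minus its value at $0$ plus its value at $0$; the Lipschitz bound $(k_1)$ with $r=R_0$ controls the $f$-difference by $2R_0L_f(R_0)$ and, via condition {\bf (H4)}, the $C_1$-difference by $2R_0N_1$, while the zero terms contribute $\sup_{s}\|f(s,0,0)\|$ and, using the $\lambda_1$-boundedness in {\bf (H3)}, $\int_{-\infty}^t\|C_1(t,s,0,0)\|\,ds\le\alpha_1$. Summing these and invoking the displayed inequality for $R_0$ yields $\|\Gamma y\|_{\infty}\le R_0$, so $\Gamma(\Omega_0)\subseteq\Omega_0$. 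For the contraction, for $y_1,y_2\in\Omega_0$ the same Lipschitz estimates give $\|\Gamma y_1-\Gamma y_2\|_{\infty}\le (2L_f(R_0)+2N_1)\|y_1-y_2\|_{\infty}$; and since the right-hand side of the displayed inequality is nonnegative, rearranging shows $2L_f(R_0)+2N_1<1$, so $\Gamma$ is contractive. Banach's fixed point theorem then produces the unique almost automorphic solution of (\ref{eq4}).

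I expect no serious obstacle, since the statement is essentially a specialization of Theorem \ref{th25}; the only point requiring a little care is the passage from the supremum condition $(k_2)$ to the existence of a single admissible radius $R_0$ satisfying the strict inequality, for which the continuity of $r\mapsto r-2rL_f(r)-2rN_1$ (guaranteed by the continuity of $L_f$ in $(k_1)$) is precisely what transfers the strict inequality from the supremum to an attained value.
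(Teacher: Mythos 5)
Your proposal is correct and takes essentially the paper's own approach: Theorem \ref{lk} is left unproved in the paper precisely because it is the specialization to $C_2\equiv 0$ of the proof of Theorem \ref{th25}, and that is exactly what you reproduce (well-definedness of the operator in (\ref{OP02}) via Lemma \ref{l22} and Lemma \ref{lem12}, extraction of a radius $R_0$ from $(k_2)$, self-map and contraction estimates on the ball $\Omega_0$, then Banach's fixed point theorem). The only inaccuracy is your closing claim that the continuity of $L_f$ is needed to pass from $(k_2)$ to an admissible radius: by the definition of supremum, if $\sup_{r>0}\left(r-2rL_f(r)-2rN_1\right)$ strictly exceeds $\alpha_1+\sup_{s\in\R}\|f(s,0,0)\|$, then some $r=R_0$ already satisfies the strict inequality, so no continuity is required for that step.
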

%\noindent{Como consecuencia de éste teorema se tiene el siguiente corolario:}

\begin{cor} Suppose that condition {\bf (H1)} holds and $C_1:\mathbb{R}\times \mathbb{R} \times \X \times \X \to \X$ is a Bi-almost automorphic function that satisfy conditions {\bf (H3)} and {\bf (H4)} and further that condition $(k_1)$ holds  with $L_f(\cdot)=L_f$ a positive constant. If the inequality 
$$2(L_f+N_1)<1$$ 
hold, then the equation (\ref{eq4}) has a unique almost automorphic solution.
\end{cor}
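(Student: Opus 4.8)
The plan is to deduce this corollary directly from Theorem \ref{lk} by checking that the constant-Lipschitz hypothesis, together with the inequality $2(L_f+N_1)<1$, forces conditions $(k_1)$ and $(k_2)$ to hold. Condition $(k_1)$ is immediate: taking $L_f(\cdot)\equiv L_f$ a positive constant is a particular instance of the continuous bounded Lipschitz function required there, so nothing needs to be verified. As in the proof of Theorem \ref{lk}, the operator $\Gamma$ of (\ref{OP02}) is well defined on $AA(\R;\X)$ by Lemma \ref{l22} together with Lemma \ref{lem12}.

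The substantive point is to verify $(k_2)$. With $L_f(r)=L_f$ constant, the left-hand side of $(k_2)$ becomes
$$\sup_{r>0}\bigl(r-2rL_f-2rN_1\bigr)=\sup_{r>0}\, r\bigl(1-2(L_f+N_1)\bigr).$$
By hypothesis $2(L_f+N_1)<1$, so the coefficient $1-2(L_f+N_1)$ is strictly positive, and hence the map $r\mapsto r\bigl(1-2(L_f+N_1)\bigr)$ is unbounded above; its supremum over $r>0$ is $+\infty$. On the other hand, the right-hand side of $(k_2)$ is finite: the quantity $\alpha_1$ is finite by the integrability assumption in {\bf (H3)}, and $\sup_{s\in\R}\|f(s,0,0)\|<+\infty$ because $f(\cdot,0,0)\in AA(\R;\X)$ and every almost automorphic function is bounded. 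Therefore the strict inequality demanded by $(k_2)$ holds automatically.

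Having verified $(k_1)$ and $(k_2)$, I would simply invoke Theorem \ref{lk} to conclude that equation (\ref{eq4}) admits a unique almost automorphic solution. I expect no real obstacle here, since the argument is a clean specialization of the more general result; the only care needed is to justify the finiteness of the right-hand side of $(k_2)$, which rests on the boundedness of almost automorphic functions and on condition {\bf (H3)}. This mirrors exactly the reduction used in the corollary following Theorem \ref{th25}, where the same type of unboundedness-versus-finiteness comparison is what makes the constant-Lipschitz case fall under the hypotheses of the general existence theorem.
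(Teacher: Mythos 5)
Your proposal is correct and follows essentially the same route as the paper: the paper's own argument (given explicitly for the analogous corollary to Theorem \ref{th25}) also verifies $(k_2)$ by observing that $1-2(L_f+N_1)>0$ makes the left-hand supremum exceed the finite quantity $\alpha_1+\sup_{s\in\R}\|f(s,0,0)\|$ (finite by {\bf (H3)} and boundedness of almost automorphic functions), and then invokes the general existence theorem. The only cosmetic difference is that the paper phrases this as the existence of $R_0$ with $R-2R(L_f+N_1)$ exceeding the right-hand side for all $R\geq R_0$, rather than saying the supremum is $+\infty$; the content is identical.
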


%The following are results of local Lipschitz type. In \cite [lema 2]{MPGR}  

\section{A Bohr-Neugebauer type result.}\label{secB-N}
As it is well known Massera's theorem for periodic linear systems asserts that: a periodic linear system has a  periodic solution if and only if it has a bounded solution. The Bohr-Neugebauer result, is an extension of the periodic setting to the almost periodic one and it asserts that a bounded solution of an almost periodic linear system is actually almost periodic, and obviously all almost periodic solutions are bounded. In this section we are able to give a result in the direction of Bohr-Neugebauer for integral equations; i.e. we ensure that a solution with relatively compact range (and thus bounded) of the integral equations of our interest is actually almost automorphic.\\

Let $||\cdot||_{es}$ be the essential supremum norm, 
%; that is, we will prove that {\sl a bounded solutions of the integral equation (\ref{eq4}) is almost automorphic if and only if  it has relatively compact range}. 
% Before that,
  we need the following lemma whose proof is immediate:\\

\begin{lem}\label{AUXLEM1}Let the positive functions $a,\lambda ,v:\R \to \R^+,\, D:\R\times\R\to\R^+$ with $v$ essentially bounded and 
$$\sup_{t \in \R}\Big{(}\int_{-\infty}^{t}D_1(t,s)\lambda_1(s)ds+\int_{t}^{+\infty}D_2(t,s)\lambda_2(s)ds\Big{)}=\varrho < 1\; .$$ 
%for some positive constant $r(=1$ no hay salación por ahora :P ).
If $v$ satisfy the integral inequality:
$$v(t) \leq a(t)+\int_{-\infty}^{t}D_1(t,s)\lambda_1(s)v(s)ds+\int_{t}^{+\infty}D_2(t,s)\lambda_2(s)v(s)ds, \ t\in \R\, ,$$
then \,  $||v||_{es} \leq \dfrac{||a||_{\infty}}{1-\varrho}.$ 
\end{lem}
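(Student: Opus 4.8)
The plan is to run a single Grönwall-type bootstrap that exploits the essential boundedness of $v$ together with the smallness condition $\varrho<1$; no iteration is needed. First I would set $M:=||v||_{es}$, which is finite by hypothesis, and record that $v(s)\leq M$ for almost every $s\in\R$. The crucial feature is that $M$ is a finite constant that can be pulled out of both integrals.

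Next I would substitute this (a.e.) bound into the right-hand side of the integral inequality. Since $D_1,D_2,\lambda_1,\lambda_2$ are nonnegative, replacing $v(s)$ by $M$ only enlarges the integrands, so for almost every $t$ I obtain
$$v(t)\leq a(t)+M\Big(\int_{-\infty}^{t}D_1(t,s)\lambda_1(s)\,ds+\int_{t}^{+\infty}D_2(t,s)\lambda_2(s)\,ds\Big)\; .$$
By the defining property of $\varrho$ the parenthesized quantity is $\leq\varrho$ uniformly in $t$, whence $v(t)\leq ||a||_{\infty}+M\varrho$ for a.e. $t$.

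Then I would take the essential supremum over $t\in\R$ on the left. Because the bound $||a||_{\infty}+M\varrho$ on the right is a constant independent of $t$, this gives $M=||v||_{es}\leq ||a||_{\infty}+M\varrho$. Finally, using $\varrho<1$ I rearrange to $M(1-\varrho)\leq ||a||_{\infty}$ and divide, obtaining $||v||_{es}\leq ||a||_{\infty}/(1-\varrho)$, as claimed.

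As for difficulties, there is essentially no hard step here, which is exactly why the statement advertises the proof as immediate. The only points demanding a little care are bookkeeping: verifying that the a.e. bound $v\leq M$ is legitimate (it is, by the very definition of the essential supremum), and noting that finiteness of the two integrals is guaranteed by the $\varrho<1$ hypothesis, so the manipulations are not vacuous. The nonnegativity of the kernels $D_i$ and weights $\lambda_i$ is what permits the monotone bounding of the integrands, and the strict inequality $\varrho<1$ is precisely what is needed to invert $1-\varrho$ at the end.
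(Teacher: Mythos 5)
Your proof is correct: the paper itself gives no argument for this lemma (it is stated with the remark that the proof is immediate), and your one-step bootstrap---bounding $v(s)$ a.e.\ by $M=\|v\|_{es}$ inside both integrals, pulling out the constant, using the uniform bound $\varrho$, and then absorbing the term $M\varrho$ thanks to $\varrho<1$---is precisely that intended immediate argument. The points you flag (nonnegativity of $D_i,\lambda_i$ to justify the monotone bound, finiteness of $M$ by the essential boundedness hypothesis) are exactly the ones that make the manipulation legitimate, so there is nothing to add.
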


\begin{rmk}
We remark that if $v$ is bounded, then the conclusion of the previous lemma is $||v||_{\infty} \leq \dfrac{||a||_{\infty}}{1-\varrho},$ which implies that $v(t) \leq \dfrac{||a||_{\infty}}{1-\varrho},$ for all $t \in \R$. Therefore, if the function $a$ is the zero constant, then the function $v$ is also the zero constant.
\end{rmk}
%\paragraph{Proof}
%\begin{eqnarray*}
%v(t)=|v(t)|&\leq&|a(t)|+|\int_{-\infty}^{t}D(t-s)\lambda(s)v(s)ds|\\
%&\leq& |a(t)|+|\int_{-\infty}^{t}D(t-s)\lambda(s)ds| ||v||_{\infty},
%\end{eqnarray*}
%luego:
%$$||v||_{\infty}\leq ||a||_{\infty}+||\int_{-\infty}^{t}D(t-s)\lambda(s)ds||_{\infty} ||v||_{\infty},$$
%de esto se concluye que:
%$$v(t)\leq \dfrac{||a||_{\infty}}{1-\varrho}. \ \square$$
 
%\noindent Para nuestro objetivo (plasmado en el siguiente teorema), se asumirá que la función $\lambda$ es la constante
%$L_g$.
%
%
%such that $C_1$ is $\lambda$-bounded with $\lambda \in L^1(\R^+)$ and $C_2$ is $\eta$-bounded with $\eta \in L^1(\R^-)$. Moreover, $C_1$ and $C_2$ satisfies condition {\bf (H3)}. 

\begin{thm}\label{Massteo}
Suppose that $C_1, C_2:\mathbb{R}\times \mathbb{R} \times \X \times \X \to \X$ are Bi-almost automorphic functions that satisfies conditions {\bf (H3)} and {\bf (H4)}, and let $f \in AA(\R\times \X \times \X; \X)$ be a $L_f$-Lipschitz, that is:
$$||f(t,x,y)-f(t,x_1,y_1)||\leq L_f \left(||x-x_1||+||y-y_1|| \right)\, ,$$
for $(x,y),\, (x_1,y_1)$ on bounded subsets of $\X\times \X$. Further, suppose that 
$$L_f+\sup_{t \in \R}\left( \int_{-\infty}^t \mu_1(t,s)ds+\int_t^{+\infty}\mu_2(t,s)ds\right)=\rho <1\, ,$$
and for $i=0,1,2$ the functions $a_i \in AA(\R;\X)$. Then, a bounded continuous solution of the integral equation (\ref{eq4}) is almost automorphic if and only if it has relatively compact range.% in $\X$.
%$$u(t)=f(t,u(t))+\int_{-\infty}^{t}C(t-s)g(s,u(s))ds$$
%tiene rango relativamente compacto en $\X$ si y sólo si es casi automórfica.
\end{thm}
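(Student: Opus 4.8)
The necessity is immediate from the recalled properties of almost automorphic functions: any $y\in AA(\R;\X)$ has relatively compact range, so a solution that happens to be almost automorphic automatically has relatively compact range. Hence the whole content lies in the sufficiency, and the plan is to verify Bochner's double--limit criterion of Definition \ref{def1} directly for a bounded continuous solution $y$ whose range $K=\overline{\{y(t):t\in\R\}}$ is compact. Throughout I would exploit that the almost automorphic and Bi--almost automorphic convergences of the data are \emph{uniform} in the space variables over the compact set $K\times K$, while only \emph{pointwise} in $t$.

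First I would fix an arbitrary sequence $\{s_n'\}\subset\R$ and, by successive extractions, pass to a single subsequence $\{s_n\}$ along which all the data stabilize: the kernel limits $\tilde C_i(t,s,x,y)=\lim_n C_i(t+s_n,s+s_n,x,y)$ together with their backward counterparts hold (Definition \ref{defBaa}), uniformly for $(x,y)\in K\times K$; and the limits $\tilde f$ of $f$ and $\tilde a_i$ of the $a_i$ exist with their backward limits. A further diagonal extraction over a countable dense set $Q\subset\R$, using that $\{y(q+s_n)\}_n\subset K$ for each $q$, makes $y(q+s_n)$ convergent for every $q\in Q$. The integrable dominators needed for all the passages to the limit are furnished by the $\lambda_i$--boundedness of $C_i$ in {\bf (H3)} together with Lemma \ref{lemBou}, which bounds $\tilde C_i$ by the same $\lambda_i$; thus each kernel difference is dominated by $2\lambda_i(t,\cdot)$.

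Next I would show that $g_n:=y(\cdot+s_n)$ converges pointwise on all of $\R$ to a function $\tilde y$ solving the limit equation
\begin{equation*}
\tilde y(t)=\tilde f(t,\tilde y(t),\tilde y(\tilde a_0(t)))+\int_{-\infty}^{t}\tilde C_1(t,s,\tilde y(s),\tilde y(\tilde a_1(s)))\,ds+\int_{t}^{+\infty}\tilde C_2(t,s,\tilde y(s),\tilde y(\tilde a_2(s)))\,ds.
\end{equation*}
Writing the shifted equation satisfied by $g_n$ and subtracting, the pure ``data--mismatch'' parts of the integrands, $C_i(t+s_n,\cdot,x,y)-\tilde C_i(t,\cdot,x,y)$ evaluated on $K\times K$, integrate to zero by the dominated convergence theorem (uniformity over $K$ plus the dominator $2\lambda_i$), leaving a linear integral inequality for $\phi_n(t):=\|g_n(t)-\tilde y(t)\|$ with kernels $\mu_i$ and local coefficient $L_f$, whose effective total mass is controlled by $\rho<1$. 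Since the $\phi_n$ are uniformly bounded and $\phi_n(q)\to0$ on the dense set $Q$, an estimate in the spirit of Lemma \ref{AUXLEM1} forces $\phi_n(t)\to0$ for every $t$, so $\tilde y$ exists everywhere and solves the limit equation. Running the identical scheme backwards with the translations $-s_n$ produces a bounded solution $z$ of the original equation (\ref{eq4}) (resp. (\ref{eq5})) as the pointwise limit of $\tilde y(\cdot-s_n)$; since $L_f+\sup_t(\int_{-\infty}^t\mu_1+\int_t^{+\infty}\mu_2)=\rho<1$, the original equation admits at most one bounded solution (the difference of two satisfies the homogeneous inequality of Lemma \ref{AUXLEM1} with $a\equiv0$, hence vanishes), so $z=y$. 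This gives $\tilde y(t-s_n)\to y(t)$ pointwise, Bochner's criterion is met, and $y\in AA(\R;\X)$.

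The main obstacle is precisely the step of upgrading the subsequential, dense--set convergence coming from the compact range to genuine pointwise convergence at \emph{every} $t$, while simultaneously passing to the limit under the two improper integrals. Two points are delicate: (i) almost automorphic convergence is only pointwise in $t$, so one cannot hope for uniform smallness of the forcing terms and must instead close the estimate through the uniqueness/contraction supplied by $\rho<1$ rather than by an Arzel\`a--Ascoli argument; and (ii) the nonlocal compositions $y(a_i(\cdot+s_n))$ do not reduce to translates of $g_n$ when the $a_i$ are merely almost automorphic, so identifying their limits with $\tilde y(\tilde a_i(\cdot))$ inside the integrals requires combining the almost automorphy of the $a_i$ with the relatively compact range of $y$. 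Everything else---continuity of the integral operators, the dominated--convergence passages, and the concluding uniqueness---is routine given Lemmas \ref{lemBou} and \ref{AUXLEM1} and hypotheses {\bf (H3)} and {\bf (H4)}.
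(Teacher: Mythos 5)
Your overall Bochner-criterion scheme (forward limit along a subsequence, limit equation, backward limit) is the same as the paper's, and the necessity direction and the forward extraction are fine in outline; but there are two genuine gaps, and the second one is fatal. The first is the identification of the composition limits. Since $y$ is continuous and $a_i(s+s_n)\to\tilde a_i(s)$ in $\R$, one has $y(a_i(s+s_n))\to y(\tilde a_i(s))$: the limit is $y$ \emph{frozen} at the limiting arguments, not $\tilde y(\tilde a_i(s))$ (test $a_i\equiv c$ constant: the translates of $y\circ a_i$ are constantly $y(c)$, whereas $\tilde y(c)=\lim_n y(c+s_n)\neq y(c)$ in general). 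So the limit equation must carry $y\circ\tilde a_i$, which is exactly the paper's $\tilde y_i$. This is not cosmetic: because the compositions freeze to $y$, in the backward estimate for $\|\tilde y(t-s_n)-y(t)\|$ the terms $\|y(\tilde a_i(s-s_n))-y(a_i(s))\|$ are \emph{forcing} terms tending to zero (continuity of $y$ plus almost automorphy of the $a_i$; these are the paper's $\Lambda_{3,n}$), so only the diagonal differences carry Lipschitz mass and the inequality closes with constant $\rho<1$.

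The fatal gap is your backward step, which rests on ``the original equation admits at most one bounded solution.'' That claim is both unproved and false under the stated hypotheses. For two arbitrary bounded solutions $u,v$ the composition differences cannot be decoupled: with $w=\|u-v\|$ one only gets $\|w\|_\infty\le 2\left(L_f+N_1+N_2\right)\|w\|_\infty=2\rho\,\|w\|_\infty$, which is vacuous unless $\rho<1/2$; Lemma \ref{AUXLEM1} with $a\equiv0$ does not apply to this inequality. Concretely, take $\X=\R$, $C_1=C_2\equiv0$, $a_0(t)=\sin t$, $f(t,x,y)=\tfrac{3}{5}(x+y)$, so that $L_f=\rho=\tfrac{3}{5}<1$ and all hypotheses of Theorem \ref{Massteo} hold; equation (\ref{eq5}) becomes $y(t)=\tfrac{3}{2}\,y(\sin t)$, and one may prescribe $y$ continuously and freely on the fundamental domain $[\sin 1,1]$ of the map $\sin$, subject only to the matching condition $y(\sin 1)=\tfrac{2}{3}y(1)$, and then propagate by the equation: this produces infinitely many bounded continuous solutions (all $2\pi$-periodic, hence almost automorphic --- consistent with the theorem, but uniqueness fails). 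Hence you cannot identify the backward limit $z$ with $y$ by uniqueness; one must prove directly that $\tilde y(t-s_n)\to y(t)$, which is what the paper does: it writes the integral inequality whose forcing terms $\Lambda_{0,n},\dots,\Lambda_{3,n}$ tend to zero, extracts a pointwise subsequential limit $\eta(t)$ of $\|\tilde y(t-s_n)-y(t)\|$ by Bolzano--Weierstrass, and applies Lemma \ref{AUXLEM1} with total mass $\rho<1$ to conclude $\eta\equiv0$. (A lesser point: in your forward inequality the kernels are $\mu_i$ only if you route the Lipschitz estimate through $C_i(t+s_n,s+s_n,\cdot,\cdot)$, whose mass satisfies $\int_{-\infty}^{t}\mu_1(t+s_n,s+s_n)\,ds\le N_1$; routing through $\tilde C_i$, as your decomposition suggests, produces the kernels $\tilde\mu_i$, whose masses are not controlled by any hypothesis.)
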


%\noindent {\sl Proof of theorem \ref{Massteo}}
\begin{proof}
By the properties of almost automorphic functions, the necessity is immediate. Therefore, we prove the
sufficient condition.\\
Let $\{s_n^{''}\}$ be an arbitrary sequence in $\R$, since $f \in AA(\R\times\X \times\X;\X)$ there exist a subsequence
$\{s_n^{'}\}\subseteq \{s_n^{''}\}$ such that, if $\mathcal{B}$ is a bounded subset of $\X \times\X$, the following limits holds:
%\begin{eqnarray}
$$\lim_{n \to +\infty }f(t+s_n^{'},x,y)=:\tilde f(t,x,y), \lim_{n \to +\infty }\tilde f(t-s_n^{'},x,y)=f(t,x,y),  \ t \in \R,
\, \forall \; (x , y)  \in \mathcal{B}.$$
Moreover, for all $(x , y)  \in \mathcal{B}$ we have
$$\lim_{n\to \infty}C_1(t+s_n',s+s_n',x,y):=\tilde{C}_1(t,s,x,y)\; ,\; \; \lim_{n\to \infty}\tilde{C}_1(t-s_n',s-s_n',x,y)=C_1(t,s,x,y)\; .$$
and also
$$\lim_{n\to \infty}C_2(t+s_n',s+s_n',x,y):=\tilde{C}_2(t,s,x,y)\;, \; \; \lim_{n\to \infty}\tilde{C}_2(t-s_n',s-s_n',x,y)=C_2(t,s,x,y)\; .$$
Let $y\in BC(\mathbb{R}\; \X)$ be a solution of the integral equation (\ref{eq4}) with range, $\mathcal{R}_y$, relatively compact. Then, for $i=0,1,2$, the range of each functions $y\circ a_i:\mathbb{R}\to \X$ are relatively compact. Let $\overline{\mathcal{R}_y}=K$. Since  $\{y(t+s_n^{'})\}_{n\in \N}$ is a sequence in $K$,  by the equivalent properties of compactness in metric space, there exists a subsequence $\{s_n\}\subseteq \{s_n^{'}\}_{n\in \N}$ such that the sequence $\{y(t+s_n)\}_{n\in \N}$ converge to $\tilde{y}(t)\in \X$; this define a function $\tilde{y}:\mathbb{R}\to \X$. Similarly, we have that the sequences $\{y(a_i(t+s_n))\}_{n\in \N}$ converge to $\tilde{y}_i(t)$ and therefore define the functions $\tilde{y}_i:\R \to \X$. \\
Now, let us define the new function
$$\xi(t):=\tilde f(t,\tilde y(t), \tilde y_0(t))+\int_{-\infty}^{t}\tilde C_1(t,s,\tilde y(s),\tilde y_1(s))ds+\int_{t}^{+\infty}\tilde C_2(t,s,\tilde y(s),\tilde y_2(s))ds\, .$$
Then we have
\begin{eqnarray*}
||y(t+s_n)-\xi(t)|| &\leq & ||f(t+s_n,y(t+s_n),y\circ a_0(t+s_n))-f(t+s_n,\tilde{y}(t),\tilde{y}_0(t))||+\\
&+&||f(t+s_n,\tilde{y}(t),\tilde{y}_0(t))-\tilde{f}(t,\tilde{y}(t),\tilde{y}_0(t)) ||+\\
&+&\int_{-\infty}^{t}\Big{|}\Big{|} C_1(t+s_n,s+s_n, y(s+s_n),y\circ a_1(s+s_n))-\tilde C_1(t,s,\tilde y(s),\tilde y_1(s))\Big{|}\Big{|}ds +\\
&+& \int_{t}^{+\infty}\Big{|}\Big{|} C_2(t+s_n,s+s_n, y(s+s_n),y\circ a_2(s+s_n))-\tilde C_2(t,s,\tilde y(s),\tilde y_2(s))\Big{|}\Big{|}ds\, .
\end{eqnarray*}
Therefore using the Lebesgue's dominate convergence theorem, we have
$$\lim_{n\to \infty}||y(t+s_n)-\xi(t)||=0\, ,$$
which means that $\xi(t)=\tilde{y}(t)$. The previous calculus implies that the function $\tilde{y}$ satisfy the integral equation
$$\tilde{y}(t)=\tilde f(t,\tilde y(t), \tilde y_0(t))+\int_{-\infty}^{t}\tilde C_1(t,s,\tilde y(s),\tilde y_1(s))ds+\int_{t}^{+\infty}\tilde C_2(t,s,\tilde y(s),\tilde y_2(s))ds\, .$$
Let us prove that $\tilde{y}(t-s_n)\to y(t)$ as $n \to \infty$. In fact, first observe that
\begin{eqnarray*}
||\tilde{y}(t-s_n)-y(t)|| &\leq & ||\tilde{f}(t-s_n,\tilde{y}(t-s_n),\tilde{y}_0(t-s_n))-f(t,\tilde{y}(t-s_n),\tilde{y}_0(t-s_n))||+\\
&+&||f(t,\tilde{y}(t-s_n),\tilde{y}_0(t-s_n))-f(t,y(t),y\circ a_0(t)) ||+\\
&+&\int_{-\infty}^{t}\Big{|}\Big{|}\tilde C_1(t-s_n,s-s_n,\tilde y(s-s_n),\tilde y_1(s-s_n))- C_1(t,s,y(s),y\circ a_1(s))\Big{|}\Big{|}ds +\\
&+& \int_{t}^{+\infty}\Big{|}\Big{|}\tilde C_2(t-s_n,s-s_n,\tilde y(s-s_n),\tilde y_2(s-s_n))- C_2(t,s,y(s),y\circ a_2(s))\Big{|}\Big{|}ds\\
&\leq & \Lambda_{0,n}(t)+L_f  \Big{(}  ||\tilde y(t-s_n)-y(t)||+||\tilde y_0(t-s_n)-y\circ a_0 (t)||  \Big{)} +\\
&+& \Lambda_{1,n}(t)+\int_{-\infty}^t  \mu_1(t,s)  \Big{(} ||\tilde y(s-s_n)-y(s)|| +||\tilde{y}_1(s-s_n)-y\circ a_1(s)|| \Big{)} ds+\\
&+& \Lambda_{2,n}(t)+\int_{t}^{+\infty}  \mu_2(t,s)  \Big{(} ||\tilde y(s-s_n)-y(s)|| +||\tilde{y}_2(s-s_n)-y\circ a_2(s)|| \Big{)} ds\\
&\leq & \sum_{i=0}^3\Lambda_{i,n}+ L_f  ||\tilde y(t-s_n)-y(t)||+ \int_{-\infty}^t  \mu_1(t,s) ||\tilde y(s-s_n)-y(s)|| ds\, +\\
&+& \int_{t}^{+\infty}  \mu_2(t,s)||\tilde y(s-s_n)-y(s)||ds .\\
%&\textcolor{blue}{+}& \textcolor{blue}{L_f ||\tilde{y}_0(t-s_n)-y\circ a_0(t)||+ \int_{-\infty}^t  \mu_1(t,s) ||\tilde{y}_1(s-s_n)-y\circ a_1(s)||ds\, + }\\
%&\textcolor{blue}{+}& \textcolor{blue}{\int_{t}^{+\infty}  \mu_2(t,s) ||\tilde{y}_2(s-s_n)-y\circ a_2(s)||ds\, .}
\end{eqnarray*}
Where,
\begin{eqnarray*}
\Lambda_{0,n}(t)&:=&||\tilde{f}(t-s_n,\tilde{y}(t-s_n),\tilde{y}_0(t-s_n))-f(t,\tilde{y}(t-s_n),\tilde{y}_0(t-s_n))||\, ,\\
\Lambda_{1,n}(t)&:=& \int_{-\infty}^{t}\Big{|}\Big{|}\tilde C_1(t-s_n,s-s_n,\tilde y(s-s_n),\tilde y_1(s-s_n))- C_1(t,s,\tilde y(s-s_n),\tilde y_1(s-s_n))\Big{|}\Big{|}ds\, ,\\
\Lambda_{2,n}(t)&:=&\int_{t}^{+\infty}\Big{|}\Big{|}\tilde C_2(t-s_n,s-s_n,\tilde y(s-s_n),\tilde y_2(s-s_n))- C_2(t,s,\tilde y(s-s_n),\tilde y_2(s-s_n))\Big{|}\Big{|}ds\, ,\\
\Lambda_{3,n}(t)&:=&L_f ||\tilde{y}_0(t-s_n)-y\circ a_0(t)||+ \int_{-\infty}^t  \mu_1(t,s) ||\tilde{y}_1(s-s_n)-y\circ a_1(s)||ds\, + \\
&+& \int_{t}^{+\infty}  \mu_2(t,s) ||\tilde{y}_2(s-s_n)-y\circ a_2(s)||ds\; .
\end{eqnarray*}
%$$\Lambda_{0,n}=||\tilde{f}(t-s_n,\tilde{y}(t-s_n),\tilde{y}_0(t-s_n))-f(t,\tilde{y}(t-s_n),\tilde{y}_0(t-s_n))||\, ,$$
%$$\Lambda_{1,n}= \int_{-\infty}^{t}\Big{|}\Big{|}\tilde C_1(t-s_n,s-s_n,\tilde y(s-s_n),\tilde y_1(s-s_n))- C_1(t,s,\tilde y(s-s_n),\tilde y_1(s-s_n))\Big{|}\Big{|}ds\, ,$$
%$$\Lambda_{2,n}=\int_{t}^{+\infty}\Big{|}\Big{|}\tilde C_2(t-s_n,s-s_n,\tilde y(s-s_n),\tilde y_2(s-s_n))- C_2(t,s,\tilde y(s-s_n),\tilde y_2(s-s_n))\Big{|}\Big{|}ds\, .$$
It is not difficult to check that for $i=0,1,2$ we obtain $\Lambda_{i,0}(t)\to 0$, if $n \to \infty$.
Furthermore, since $a_i\in  AA(\R ; \X)$ then we also have $\Lambda_{3,n}(t)\to 0$, if $n \to \infty$.

On the other hand, since $||\tilde{y}(t-s_n)-y(t)||\leq 2||y||_{\infty}$  and $y$ is bounded, then by Bolzano-Weierstrass's theorem there exist a subsequence $\{s_{\tau}\}\subseteq \{s_n\}$ such that $||\tilde y(t-s_\tau)-y(t)||\to \eta(t)$
pointwise when $\tau \to +\infty$. Therefore, we have
$$\eta(t)\leq L_f \eta(t) + \int_{-\infty}^t\mu_{1}(t,s)\eta(s)ds + \int_t^{+\infty}\mu_{2}(t,s)\eta(s)ds$$
%
%$$\textcolor{blue}{L_f ||\tilde{y}_0(t-s_n)-y\circ a_0(t)||+ \int_{-\infty}^t  \mu_1(t,s) ||\tilde{y}_1(s-s_n)-y\circ a_1(s)||ds\, + }$$
%$$\textcolor{blue}{+\int_{t}^{+\infty}  \mu_2(t,s) ||\tilde{y}_2(s-s_n)-y\circ a_2(s)||ds\, \to 0 ,\; \; {\rm if} \; \; n\to \infty\; .}$$
Now, using lemma \ref{AUXLEM1} we obtain that $\eta$ is the null function, from where we immediately deduce that  $||\tilde y(t-s_n)-y(t)||\to 0$, as $n \to \infty$.
%$$||\tilde y(t-s_n)-y(t)|| < \dfrac{\epsilon}{1-\rho}\; .$$
\end{proof}

The corresponding result for the integral equation (\ref{eq4}) can be easily deduced from the previous theorem, but we prefer to state it separately as a matter of completeness.
\begin{thm}\label{Massteo2}
Suppose that $C_1:\mathbb{R}\times \mathbb{R} \times \X \times \X \to \X$ is a Bi-almost automorphic function that satisfy  conditions {\bf (H3)} and {\bf (H4)}, and let $f \in AA(\R\times \X \times \X; \X)$ be $L_f$-Lipschitz, that is:
$$||f(t,x,y)-f(t,x_1,y_1)||\leq L_f \left(||x-x_1||+||y-y_1|| \right)\, ,$$
for $(x,y),\, (x_1,y_1)$ on bounded subsets of $\X\times \X$. Further suppose that 
$$L_f+\sup_{t \in \R}  \int_{-\infty}^t \mu_1(t,s)ds  =\rho <1\, ,$$
and for $i=0,1$ the functions $h_i \in AA(\R;\X)$. Then, a bounded solution of the integral equation (\ref{eq4}) is almost automorphic if and only if it has relatively compact range.
%Suppose that $L_f,L_{g_1},L_{g_2}$ are positive constants, $f,g_1,g_2 \in AA(\R\times\X;\X)$ are $L_f,L_{g_1},L_{g_2}$
%Lipschitz, $C_1\in L^1(\R^+)$, $C_2\in\L^1(\R^-)$ and
%$\rho=L_f+L_{g_1}||C_1||_{L^1(\R^+)}+L_{g_2}||C_2||_{L^1(\R^-)}<1.$
%%\end{itemize}
%Then, a bounded solution of the integral equation (\ref{eq5}) is almost automorphic if and only if has relatively
%compact range.
%%Entonces una solución acotada de la ecuación
%%$$u(t)=f(t,u(t))+\int_{-\infty}^{t}C_1(t-s)g_1(s,u(s))ds+\int_{t}^{+\infty}C_2(t-s)g_2(s,u(s))ds$$
%%tiene rango relativamente compacto en $\X$ si y sólo si es casi automórfica. $\square$
\end{thm}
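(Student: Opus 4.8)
The strategy is to run the argument of Theorem \ref{Massteo} in the degenerate case $C_2\equiv 0$, with $h_0,h_1$ playing the roles of $a_0,a_1$. The necessity is immediate, since every almost automorphic function has relatively compact range. For the sufficiency, let $y\in BC(\R;\X)$ solve (\ref{eq4}) with $\mathcal{R}_y$ relatively compact, and put $K=\overline{\mathcal{R}_y}$. First I would fix an arbitrary sequence $\{s_n''\}$ and, using that $f\in AA(\R\times\X\times\X;\X)$ and that $C_1$ is Bi-almost automorphic, pass to a subsequence $\{s_n'\}$ along which the limit functions $\tilde f$ and $\tilde C_1$ exist and reproduce $f,C_1$ under the reverse translation, uniformly for arguments in the bounded set $\mathcal{B}=K\times K$. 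Since $K$ is compact, the sequences $\{y(t+s_n')\}$ and $\{y(h_i(t+s_n'))\}$, $i=0,1$, lie in compact sets, so a Bolzano--Weierstrass extraction yields $\{s_n\}\subseteq\{s_n'\}$ with $y(t+s_n)\to\tilde y(t)$ and $y(h_i(t+s_n))\to\tilde y_i(t)$ pointwise, defining $\tilde y,\tilde y_0,\tilde y_1:\R\to\X$.

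Next I would introduce
$$\xi(t):=\tilde f(t,\tilde y(t),\tilde y_0(t))+\int_{-\infty}^{t}\tilde C_1(t,s,\tilde y(s),\tilde y_1(s))\,ds$$
and show $y(t+s_n)\to\xi(t)$ pointwise. After the substitution $\sigma=s+s_n$ in the integral term of (\ref{eq4}) evaluated at $t+s_n$, splitting $\|y(t+s_n)-\xi(t)\|$ exactly as in Theorem \ref{Massteo} controls the $f$-part by the $L_f$-Lipschitz estimate together with the convergence $f\to\tilde f$, while the integrand is dominated by $2\lambda_1(t,s)$ via Lemma \ref{lemBou} and condition {\bf (H3)}, whose integrability $\int_{-\infty}^{t}\lambda_1(t,s)\,ds\le\alpha_1$ makes the Lebesgue dominated convergence theorem applicable. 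Hence $\xi=\tilde y$, so $\tilde y$ satisfies the limit equation obtained from (\ref{eq4}) by replacing $f,C_1$ with $\tilde f,\tilde C_1$.

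It then remains to prove the reverse limit $\tilde y(t-s_n)\to y(t)$, which is the main obstacle. I would bound $\|\tilde y(t-s_n)-y(t)\|$ by the triangle inequality into: error terms $\Lambda_{0,n},\Lambda_{1,n}$ coming from $\tilde f\to f$ and $\tilde C_1\to C_1$ under the reverse translation, which tend to $0$; a term $\Lambda_{3,n}$ collecting the deviated-argument discrepancies $\|\tilde y_0(t-s_n)-y(h_0(t))\|$ and $\int_{-\infty}^{t}\mu_1(t,s)\|\tilde y_1(s-s_n)-y(h_1(s))\|\,ds$, which tends to $0$ because $h_0,h_1\in AA(\R;\X)$; and the self-referential terms $L_f\|\tilde y(t-s_n)-y(t)\|$ and $\int_{-\infty}^{t}\mu_1(t,s)\|\tilde y(s-s_n)-y(s)\|\,ds$ arising from the $\mu_1$-Lipschitz property in {\bf (H4)}. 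Since $\|\tilde y(t-s_n)-y(t)\|\le 2\|y\|_\infty$, a further Bolzano--Weierstrass extraction $\{s_\tau\}$ gives $\|\tilde y(t-s_\tau)-y(t)\|\to\eta(t)$ pointwise, and passing to the limit yields
$$\eta(t)\le L_f\,\eta(t)+\int_{-\infty}^{t}\mu_1(t,s)\,\eta(s)\,ds.$$

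Finally, because $L_f<\rho<1$, I would rewrite this as $\eta(t)\le\int_{-\infty}^{t}\tfrac{\mu_1(t,s)}{1-L_f}\,\eta(s)\,ds$ and apply Lemma \ref{AUXLEM1} with $a\equiv 0$, $D_2\equiv 0$, and kernel $\mu_1/(1-L_f)$. The required smallness constant is
$$\frac{1}{1-L_f}\sup_{t\in\R}\int_{-\infty}^{t}\mu_1(t,s)\,ds=\frac{\rho-L_f}{1-L_f}<1,$$
so the lemma forces $\eta\equiv 0$, i.e. $\tilde y(t-s_n)\to y(t)$. Together with $y(t+s_n)\to\tilde y(t)$ this shows $y\in AA(\R;\X)$. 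The delicate points are the bookkeeping verifying $\Lambda_{3,n}\to 0$ from the almost automorphy of $h_0,h_1$, and the rearrangement absorbing the pointwise Lipschitz term $L_f\eta(t)$ so that the hypothesis $\rho<1$ matches the smallness condition required by Lemma \ref{AUXLEM1}.
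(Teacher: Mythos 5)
Your proposal is correct and follows essentially the same route as the paper: the paper proves Theorem \ref{Massteo2} simply by specializing Theorem \ref{Massteo} to $C_2\equiv 0$, which is precisely the argument you reconstruct (limit function $\xi$ via dominated convergence, the $\Lambda_{i,n}$ error terms, Bolzano--Weierstrass extraction, and Lemma \ref{AUXLEM1}). Your explicit rearrangement absorbing the term $L_f\,\eta(t)$ so that the kernel $\mu_1/(1-L_f)$ meets the smallness hypothesis $\frac{\rho-L_f}{1-L_f}<1$ of Lemma \ref{AUXLEM1} is a detail the paper leaves implicit, and it is handled correctly.
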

%Before to follow, it is necessary to give the following remark.

\begin{rmk} We remark that, if $\X$ is a finite dimensional Banach space, then the conclusion of theorem \ref{Massteo} (or theorem \ref{Massteo2}) says that:  {\sl a continuous solution of the integral equation (\ref{eq5}) (or of (\ref{eq4})) is almost automorphic  if and only if it is bounded}.
\end{rmk}
It seems very interesting to analyze the existence of solutions, to the integral equations treated here, inside the class of functions which has relatively compact range, this new class certainly generalizes the periodic, almost periodic and the almost automorphic ones, see \cite{Kloeden}. We will back to this problem in a future work.

\section{Asymptoticaly almost automorphic solution of integro-differential equations.}\label{aaasol}
%\subsection{Existence and uniqueness of a solution.}
In this section, we analyze the existence and uniqueness of the asymptotically almost automorphic solution of the equations (\ref{eqNew}) and (\ref{eqX})-(\ref{eqXX}). Let us remember from hypothesis {\bf (H5)} that for $i=1,2$; $B_i$ has the decomposition $B_i=B_i^a+B_{i,0}^{{\theta}_i}$. Before to present the results for asymptotically almost automorphic solution, let us present the following condition:\\
 
\noindent {\bf (H6)}. The functions $B_{i,0}^{{\theta}_i}$ are $\mu_3^i$-Lipschitz; that is, there exist positive functions $\mu_3^i:\R \times \R \to \R^+$ such that 
$$||B_{i,0}^{{\theta}_i}(t,s,x,y)-B_{i,0}^{{\theta}_i}(t,s,x_1,y_1)||\leq \mu_3^i(t,s)\left( ||x-x_1||+||y-y_1||\right)\, .$$
Moreover
$$\sup_{t \geq 0}\left( \int_{0}^t\mu_3^1(t,s)ds +\int_{t}^{+\infty}\mu_3^2(t,s)ds \right)=Q_1<+\infty\, .$$
%This new condition is assumed because we need that the operator defined in (\ref{Opaaa1}) be adapted to the theorems 
%\ref{teos1} and \ref{teos2}.

\subsection{Asymptotically almost automorphic solution to the integral equation (\ref{eqNew}).\\}
Let us start with the following application of theorem \ref{teos1}:

\begin{thm}\label{thAAA24}
Suppose that condition {\bf (H2)} holds, $B_1, B_2:\mathbb{R}\times \mathbb{R} \times \X \times \X \to \X$ are functions that satisfies condition {\bf (H5)} and {\bf (H6)}.
%
%
%such that $C_1$ is $\lambda$-bounded with $\lambda \in L^1(\R^+)$ and $C_2$ is $\eta$-bounded with $\eta \in L^1(\R^-)$. Moreover, $C_1$ and $C_2$ satisfies condition {\bf (H3)}. 
Let $f \in AAA(\R\times \X \times \X; \X)$, \, $\varrho >0 $ and the set
$$\Delta_0=\{y \in AAA(\R,\X):||y-y_0||_{\infty}\leq \varrho\}\, ,$$
with $y_0:\mathbb{R}\to \X$ defined by:
$$y_0(t)=f(t,0,0)+\int_{0}^{t}B_1(t,s,0,0)ds+\int_{t}^{+\infty}B_2(t,s,0,0)ds\; .$$
Additionally that $||y_0||_{\infty}\leq\varrho$ and the following properties holds:\\
\begin{enumerate}
\item There exists a positive constant $L_f$  such that for all $ t \in \R, \ x,y,x_1,y_1 \in \Delta_0$:
$$||f(t,x(t),y(t))-f(t,x_1(t),y_1(t))||\leq L_f (||x(t)-x_1(t)||+||y(t)-y_1(t)||)\; .$$
% $$||g_{1}(t,y(t))-g_{2}(t,x(t))||\leq L_{g_1}||y(t)-x(t)||,$$ 
% $$||g_{2}(t,y(t))-g_{2}(t,x(t))||\leq L_{g_2}||y(t)-x(t)||.$$
\item The constants $\varrho,L_f,N_1,N_2$ %L_{g_1},L_{g_2},||C_1||_{L^1(\R^+)}$ y $||C_2||_{L^1(\R^-)}$ 
satisfies the following inequality:
\begin{equation} \label{eq27}
2(L_f+Q_1)< \dfrac{\varrho}{\varrho+||y_0||_{\infty}}. 
\end{equation}
\end{enumerate}
%$ b)$. 
%L_f+L_{g_1}||C_1||_{L^1(\R^+)}+L_{g_2}||C_2||_{L^1(\R^-)}<\dfrac{\varrho}{\varrho+||y_0||_{\infty}}.
%\end{itemize}
Then the integral equation (\ref{eqNew}) has a unique asymptotically almost automorphic solution in $\Delta_0$.
\end{thm}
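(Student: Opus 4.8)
The plan is to recast the right-hand side of (\ref{eqNew}) as a fixed-point equation and to apply Theorem \ref{teos1} on the Banach space $AAA(\R^+;\X)$. I would set
$$\Gamma y(t)= f(t,y(t),y(b_0(t)))+\int_0^t B_1(t,s,y(s),y(b_1(s)))\,ds + \int_t^{+\infty}B_2(t,s,y(s),y(b_2(s)))\,ds\, ,$$
so that the function $y_0$ in the statement is exactly $\Gamma(0)$ and a fixed point of $\Gamma$ is precisely an asymptotically almost automorphic solution of (\ref{eqNew}).

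\textbf{Well-definedness.} First I would check that $\Gamma$ maps $AAA(\R^+;\X)$ into itself. The term $f(\cdot,y(\cdot),y(b_0(\cdot)))$ is asymptotically almost automorphic by hypothesis {\bf (H2)} (which guarantees $y(b_0(\cdot))\in AAA$) together with the composition Lemma \ref{l22}; the two integral terms are asymptotically almost automorphic by Lemma \ref{l23}, which is exactly the invariance statement for the operators $F_1$ and $F_2$ under {\bf (H5)}. Summing these, $\Gamma y\in AAA(\R^+;\X)$.

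\textbf{Invariance of $\Delta_0$ and contraction.} For $y\in\Delta_0$ one has $\|y\|_\infty\le \|y_0\|_\infty+\varrho$. Estimating $\|\Gamma y(t)-y_0(t)\|$ term by term, I would use the Lipschitz hypothesis (1) on $f$ and split each kernel as $B_i=B_i^a+B_{i,0}^{\theta_i}$: the Bi-almost automorphic part $B_i^a$ is $(\nu_i,\tilde\nu_i)$-Lipschitz with integrals controlled by $\beta_1,\beta_2$ from (\ref{Nweq2}), while the ergodic part $B_{i,0}^{\theta_i}$ is $\mu_3^i$-Lipschitz with integrals controlled by $Q_1$ from {\bf (H6)}. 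This produces a bound of the form
$$\|\Gamma y(t)-y_0(t)\|\le 2\bigl(L_f+\beta_1+\beta_2+Q_1\bigr)\bigl(\varrho+\|y_0\|_\infty\bigr)\, ,$$
and the smallness hypothesis (2) forces the right-hand side to stay below $\varrho$, so $\Gamma(\Delta_0)\subseteq\Delta_0$. The identical splitting applied to $y_1,y_2\in\Delta_0$ gives $\|\Gamma y_1-\Gamma y_2\|\le 2(L_f+\beta_1+\beta_2+Q_1)\|y_1-y_2\|$, i.e. $\Gamma$ is Lipschitz with constant $L_\Gamma=2(L_f+\beta_1+\beta_2+Q_1)$, which by (2) satisfies $L_\Gamma<\varrho/(\varrho+\|y_0\|_\infty)$. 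Since $y_0=\Gamma(0)$ with $\|y_0\|_\infty\le\varrho$, Theorem \ref{teos1} applies and delivers the unique fixed point in $\Delta_0$.

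\textbf{Main obstacle.} The delicate point is the invariance/contraction estimate rather than well-definedness: one must cleanly separate, inside each integral, the almost automorphic kernel (whose Lipschitz integrals are finite by (\ref{Nweq2})) from the ergodic kernel (controlled by {\bf (H6)}), and keep track of which norm is in play, since $\Delta_0$ is described through $\|\cdot\|_\infty$ whereas $AAA(\R^+;\X)$ is a Banach space under the norm (\ref{NormAAA}). These two norms are equivalent on $AAA(\R^+;\X)$ (the supremum of the almost automorphic component is dominated by $\|g\|_\infty$, and hence so is that of the ergodic component), which is precisely what legitimizes the direct appeal to Theorem \ref{teos1}.
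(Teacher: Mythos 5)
Your overall strategy coincides with the paper's own (very brief) treatment: the paper's proof consists precisely in introducing the operator $\Pi$ of (\ref{Opaaa1}) --- your $\Gamma$ --- observing that Lemmas \ref{l22} and \ref{l23} make it a self-map of $AAA(\R^+;\X)$, and then running the contraction argument of Theorem \ref{teos1} exactly as in the proof of Theorem \ref{th24}. Your closing remark on the comparability of $\|\cdot\|_{\infty}$ with the norm (\ref{NormAAA}), which legitimizes applying the fixed point theorem on the ball $\Delta_0$ defined through $\|\cdot\|_{\infty}$, is a point of care that the paper omits.

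There is, however, a genuine gap exactly where you reconcile your estimate with hypothesis (2). Splitting $B_i=B_i^a+B_{i,0}^{\theta_i}$ and using the $(\nu_i,\tilde{\nu}_i)$-Lipschitz property of $B_i^a$ from {\bf (H5)} together with the $\mu_3^i$-Lipschitz property of $B_{i,0}^{\theta_i}$ from {\bf (H6)}, you correctly arrive at the Lipschitz constant $L_\Gamma=2\left(L_f+\beta_1+\beta_2+Q_1\right)$. But hypothesis (2) only asserts $2(L_f+Q_1)<\varrho/(\varrho+\|y_0\|_{\infty})$; it gives no control whatsoever on $\beta_1+\beta_2$ from (\ref{Nweq2}), which is in general strictly positive (it vanishes only when the kernels $B_i^a$ do not depend on $(x,y)$). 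Hence your claims that ``the smallness hypothesis (2) forces the right-hand side to stay below $\varrho$'' and that $L_\Gamma<\varrho/(\varrho+\|y_0\|_{\infty})$ ``by (2)'' are non sequiturs: what your argument actually establishes is the theorem under the stronger inequality $2(L_f+\beta_1+\beta_2+Q_1)<\varrho/(\varrho+\|y_0\|_{\infty})$. To close the gap you must either assume this stronger inequality, or else read {\bf (H6)} as a Lipschitz condition on the full kernels $B_i$ rather than on their ergodic parts, in which case no splitting is needed in the Lipschitz estimates and the constant is exactly $2(L_f+Q_1)$, matching (2). (The theorem's own formulation is suspect on this point --- condition (2) is announced in terms of constants $N_1,N_2$ that never appear in the inequality --- but as written, your appeal to (2) does not suffice.)
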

As is natural, in the proof of theorem \ref{thAAA24}, it is important the operator $\Pi: AAA(\R^+;\X) \to AAA(\R^+;\X)$ such that:

\begin{equation}\label{Opaaa1}
\Pi y(t)= f(t,y(t),y(b_0(t)))+\int_0^t B_1(t,s,y(s),y(b_1(s)))ds +
\int_t^{+\infty}B_2(t,s,y(s),y(b_2(s)))ds\; .
\end{equation}
%Again, under the hypothesis the  operator $\Pi$ is be well defined.
The application of theorem \ref{teos2} is the following  result
\begin{thm}Suppose that condition {\bf (H2)} holds, $B_1, B_2:\mathbb{R}\times \mathbb{R} \times \X \times \X \to \X$ are Bi-almost automorphic functions that satisfy condition {\bf (H5)} and {\bf (H6)}. Let  $\Pi$ be the operator defined in (\ref{Opaaa1}), $f\in AAA(\R\times\X\times\X;\X)$, $\rho>0$ and $\Delta_0=\{y\in AAA(\R;\X): ||y-y_0||_{\infty}\leq\rho\}\; $, where 
$$y_0(t)=f(t,0,0)+\int_{0}^{t}B_1(t,s,0,0)ds + \int_{t}^{+\infty}B_2(t,s,0,0)ds\; .$$
Also, for all $t \in \R$ and $x,y,x_1,y_1 \in \Delta_0$ we have:
$$||f(t,x(t),y(t))-f(t,x_1(t),y_1(t))||\leq L_f \left( ||x(t)-x_1(t)||+||y(t)-y_1(t)|| \right)\; .$$
Suppose that we have the inequality:
$$0<\theta=\left(1-2(L_f+Q_1)\right )^{-1}||\Pi y_0-y_0||_{\infty}\leq\rho\; ,$$
with $2(L_f+Q_1)<1.$ Then the equation (\ref{eqNew}) has a unique solution
$y\in AAA(\R;\X)$ such that $||y-y_0||_{\infty}\leq \rho.$
\end{thm}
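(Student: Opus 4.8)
The plan is to realize the sought solution as the unique fixed point of the operator $\Pi$ defined in (\ref{Opaaa1}) and to invoke the abstract fixed point result Theorem \ref{teos2} on the Banach space $AAA(\R^+;\X)$. First I would verify that $\Pi$ is a well-defined self-map of $AAA(\R^+;\X)$. The term $f(\cdot,y(\cdot),y(b_0(\cdot)))$ lies in $AAA(\R^+;\X)$ by the composition Lemma \ref{l22}: condition \textbf{(H2)} guarantees that $y(b_0(\cdot))$ stays asymptotically almost automorphic, while the Lipschitz hypothesis on $f$ supplies the uniform continuity on the compact range needed for the $\mathcal{C}_K$ membership in that lemma. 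The two integral terms are handled directly by Lemma \ref{l23}, whose hypotheses are exactly \textbf{(H5)}. Since $AAA(\R^+;\X)$ is a vector space (Theorem \ref{TeoAAA}), the sum $\Pi y$ again belongs to $AAA(\R^+;\X)$, and by construction $y_0=\Pi(0)$, which places us precisely in the setting of Theorem \ref{teos2}.

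Next I would establish the Lipschitz estimate for $\Pi$ on $\Delta_0$. For $z_1,z_2\in\Delta_0$ I would split $\|\Pi z_1-\Pi z_2\|_\infty$ into the $f$-contribution and the two integral contributions. The $f$-term is bounded by $L_f\big(\|z_1-z_2\|_\infty+\|z_1(b_0(\cdot))-z_2(b_0(\cdot))\|_\infty\big)\le 2L_f\|z_1-z_2\|_\infty$, the factor $2$ coming from the two slots of $f$ together with \textbf{(H2)}. For each integral term I would use the decomposition $B_i=B_i^a+B_{i,0}^{\theta_i}$, bounding the integrand pointwise by the Lipschitz functions $\nu_i$ (from \textbf{(H5)}) and $\mu_3^i$ (from \textbf{(H6)}) times $\|z_1-z_2\|_\infty$, and then integrating over $[0,t]$ and $[t,+\infty)$. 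These integrals are uniformly bounded in $t$ by the constants furnished in \textbf{(H5)}–\textbf{(H6)}, so they collapse into a single finite contraction mass; the hypothesis $2(L_f+Q_1)<1$ then yields $\|\Pi z_1-\Pi z_2\|_\infty\le L_\Pi\|z_1-z_2\|_\infty$ with $L_\Pi:=2(L_f+Q_1)<1$.

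Then I would conclude via Theorem \ref{teos2}. Setting $\theta:=(1-L_\Pi)^{-1}\|\Pi y_0-y_0\|_\infty$, the hypothesis $0<\theta\le\rho$ lets me work inside the closed ball $\mathfrak{B}=\mathfrak{B}(y_0,\theta)\subseteq\Delta_0$. A single triangle inequality, $\|\Pi z-y_0\|_\infty\le\|\Pi z-\Pi y_0\|_\infty+\|\Pi y_0-y_0\|_\infty\le L_\Pi\theta+(1-L_\Pi)\theta=\theta$ for $z\in\mathfrak{B}$, shows $\Pi(\mathfrak{B})\subseteq\mathfrak{B}$; being a contraction there, Banach's fixed point theorem (equivalently Theorem \ref{teos2}) produces a unique $y\in\mathfrak{B}$ with $\Pi y=y$, that is, a unique asymptotically almost automorphic solution of (\ref{eqNew}) with $\|y-y_0\|_\infty\le\rho$.

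The main obstacle I anticipate is the Lipschitz estimate for the two integral operators, where for each $B_i$ one must simultaneously control the Bi-almost automorphic part $B_i^a$ (through the $\nu_i$ bounds of \textbf{(H5)}) and the ergodic part $B_{i,0}^{\theta_i}$ (through the $\mu_3^i$ bounds of \textbf{(H6)}), and verify that the integrals over $[0,t]$ and over $[t,+\infty)$ are uniformly bounded so that they merge into one contraction constant strictly below $1$. The self-map property is \emph{not} an obstacle, since it is exactly the content of the already-established Lemma \ref{l23}; the remaining work is careful but routine Lipschitz bookkeeping, entirely parallel to the earlier application of Theorem \ref{teos2} to equation (\ref{eq5}).
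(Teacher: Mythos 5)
Your proposal is correct and follows essentially the same route as the paper: the paper in fact states this theorem \emph{without} proof, presenting it as the direct application of Theorem \ref{teos2} to the operator $\Pi$ of (\ref{Opaaa1}), and your argument (well-definedness via Lemmas \ref{l22} and \ref{l23}, the identification $y_0=\Pi(0)$, the Lipschitz estimate, invariance of the ball of radius $\theta$, Banach's fixed point theorem) reproduces exactly the pattern of the proofs the paper does write out for the analogous results, namely the Theorem~\ref{teos2}-type theorem for equation (\ref{eq5}) and Theorem \ref{theoaaa12}.

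One caveat, which you inherit from the statement itself rather than introduce: your second paragraph bounds the integral terms using both the functions $\nu_i$ of \textbf{(H5)} (for $B_i^a$) and $\mu_3^i$ of \textbf{(H6)} (for $B_{i,0}^{\theta_i}$), yet concludes $L_\Pi=2(L_f+Q_1)$. What those bounds actually give is $L_\Pi\leq 2\left(L_f+\beta_1+\beta_2+Q_1\right)$, since $Q_1$ controls only the ergodic parts $B_{i,0}^{\theta_i}$, while the almost automorphic parts $B_i^a$ contribute the masses $\beta_1,\beta_2$ of (\ref{Nweq2}). So either the hypothesis $2(L_f+Q_1)<1$ and the definition of $\theta$ must be read with $Q_1$ replaced by $\beta_1+\beta_2+Q_1$, or one must additionally assume that the $\nu_i$ masses are absorbed into $Q_1$; the same imprecision is present in the paper's own Theorem \ref{thAAA24}. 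With that accounting fixed, your argument closes exactly as you describe.
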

In order to improve another result, let us note that if condition {\bf (H2)}, {\bf (H5)} and {\bf (H6)} holds, then the functions $t\to \int_0^t B_1(t,s,0,0)ds$ and $t \to \int_{t}^{+\infty} B_2(t,s,0,0)ds$  are bounded; thus, we can define the following real numbers
 $$\sup_{t\geq 0}||\int_0^t B_1(t,s,0,0)ds||=:\gamma_1<+\infty\, ;$$
 $$\sup_{t\geq 0}||\int_{t}^{+\infty} B_2(t,s,0,0)ds||=:\gamma_2<+\infty\; .$$ % and condition {\bf (H6)} also holds. 

\noindent Let us introduce the following conditions:\\

\begin{enumerate}
%\noindent 
\item [$(T_1):$] $f \in AAA(\R^+\times \X;\X)$ and there exist a continuous and bounded functions $L_f:
\R^+ \to \R^+$,  such that: for all $r>0$ and for all  $ x,y,x_1,y_1 \in B(0,r)=\{x\in \X: ||x||\leq r\}$ we have:
    $$||f(t,x,x_1)-f(t,y,y_1)||\leq L_f(r)\left( ||x-y||+||x_1-y_1|| \right)\; .$$
    % \  ||g_i(t,x)-g_i(t,y)||\leq L_{g_i}(r)||x-y||, $$
%${\rm for}\  i=1,2.$

%$f,g_i  \in AA(\R\times \X;\X)$ and there exist continuous and bounded  functions $L_f,L_{g_i}:
%\R^+ \to \R^+$,  such that: for alla $r>0$ and fot alla  $x,y \in \X$, with $ ||x||,||y||\leq r$ we have:
%    $$||f(t,x)-f(t,y)||\leq L_f(r)||x-y||,\ \  ||g_i(t,x)-g_i(t,y)||\leq L_{g_i}(r)||x-y||, $$
%${\rm for}\  i=1,2.$

\noindent \item [$(T_2):$]  $\displaystyle\sup_{r>0}\left( r-2rL_f(r)- 2rQ_1 \right) > \displaystyle\sup_{t \geq 0}||f(t,0,0)||+\gamma_1 +\gamma_2\; .$

\end{enumerate}
Now we have the following statement:

\begin{thm}\label{th25}
Suppose that condition {\bf (H2)} holds and $B_1,B_2:\mathbb{R}\times \mathbb{R} \times \X \times \X \to \X$ are functions  that satisfy conditions {\bf (H5)} and {\bf (H6)}, additionally that conditions $(T_1)$ and $(T_2)$ holds. Then, the integral equation (\ref{eqNew}) has a unique asymptotically almost automorphic solution.
\end{thm}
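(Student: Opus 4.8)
The plan is to recast the integral equation (\ref{eqNew}) as a fixed point problem for the operator $\Pi$ defined in (\ref{Opaaa1}) and to apply the Banach contraction principle on a ball of $AAA(\R^+;\X)$ centered at the origin, exactly as in the proof of the corresponding Theorem for equation (\ref{eq5}). First I would verify that $\Pi$ is a well-defined self-map of $AAA(\R^+;\X)$. The composition term $f(t,y(t),y(b_0(t)))$ lies in $AAA(\R^+;\X)$: condition \textbf{(H2)} makes $y(b_0(\cdot))$ asymptotically almost automorphic, and Lemma \ref{l22} applies because $(T_1)$ makes $f$ Lipschitz, hence uniformly continuous on the compact range $K=\overline{\{y(b_0(t))\}}\times\overline{\{y(t)\}}$ uniformly in $t$, so that $f\in\mathcal{C}_K$. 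The two integral terms belong to $AAA(\R^+;\X)$ by Lemma \ref{l23}, invoking conditions \textbf{(H2)} and \textbf{(H5)}. Since $AAA(\R^+;\X)$ is a vector space, $\Pi y\in AAA(\R^+;\X)$ for every $y\in AAA(\R^+;\X)$.

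Next, using condition $(T_2)$ I would fix a radius $R>0$ with $R-2RL_f(R)-2RQ_1>\sup_{t\geq 0}\|f(t,0,0)\|+\gamma_1+\gamma_2$, and set $\Omega_0=\{y\in AAA(\R^+;\X):\|y\|_\infty\leq R\}$, a closed ball and hence a complete metric space. To show $\Pi(\Omega_0)\subseteq\Omega_0$, for $y\in\Omega_0$ I would add and subtract the zero-argument values $f(t,0,0)$, $B_1(t,s,0,0)$ and $B_2(t,s,0,0)$ inside $\Pi y(t)$. The constant pieces are controlled by $\sup_{t\geq 0}\|f(t,0,0)\|$, $\gamma_1$ and $\gamma_2$; the remaining differences are estimated by the Lipschitz data, namely $L_f(R)$ for $f$ from $(T_1)$, and, writing $B_i=B_i^a+B_{i,0}^{\theta_i}$, the $(\nu_i,\tilde\nu_i)$-Lipschitz bounds of \textbf{(H5)} for $B_i^a$ together with the $\mu_3^i$-Lipschitz bounds of \textbf{(H6)} for $B_{i,0}^{\theta_i}$, whose integrals over $[0,t]$ and $[t,+\infty)$ are dominated by $\beta_1,\beta_2$ and $Q_1$. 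Since every $y\in\Omega_0$ satisfies $\|y(s)\|+\|y(b_i(s))\|\leq 2R$, collecting the estimates yields a bound of the form $\|\Pi y(t)\|\leq 2R\bigl(L_f(R)+\beta_1+\beta_2+Q_1\bigr)+\sup_{t\geq 0}\|f(t,0,0)\|+\gamma_1+\gamma_2$, which is $\leq R$ by the choice of $R$ dictated by $(T_2)$.

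Finally, for $y_1,y_2\in\Omega_0$ the same Lipschitz estimates, now applied to the differences $y_1-y_2$ and $y_1\circ b_i-y_2\circ b_i$, give $\|\Pi y_1-\Pi y_2\|_\infty\leq 2\bigl(L_f(R)+\beta_1+\beta_2+Q_1\bigr)\|y_1-y_2\|_\infty$, and the strict inequality forced by $(T_2)$ makes this multiplier strictly less than $1$, so $\Pi$ is a contraction on the complete set $\Omega_0$; the Banach fixed point theorem then delivers the unique asymptotically almost automorphic solution of (\ref{eqNew}). The only genuinely non-routine ingredient is the well-definedness step, i.e. that the combined advanced and delayed integral operators preserve $AAA(\R^+;\X)$; this is precisely the content of Lemma \ref{l23}, so once its hypotheses are in force the remainder is the bookkeeping of constants, the main care being to combine correctly the Lipschitz contributions of the almost automorphic part $B_i^a$ (controlled by $\beta_1,\beta_2$) and the ergodic part $B_{i,0}^{\theta_i}$ (controlled by $Q_1$) so that the resulting contraction constant is governed by the left-hand side of $(T_2)$.
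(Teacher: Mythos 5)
Your overall strategy is exactly the one the paper intends: the paper states this theorem without proof, the understood argument being the verbatim analogue of its proof for equation (\ref{eq5}) — a fixed point of the operator $\Pi$ of (\ref{Opaaa1}) on the ball $\Omega_0=\{y\in AAA(\R^+;\X):\|y\|_\infty\leq R\}$, with $R$ supplied by $(T_2)$ and well-definedness coming from Lemma \ref{l22} and Lemma \ref{l23}. Your well-definedness step (in particular, deducing membership in $\mathcal{C}_K$ from the Lipschitz hypothesis) is fine.

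However, your final step does not close. Your own estimates give the self-map bound
$$\|\Pi y\|_\infty\leq 2R\left(L_f(R)+\beta_1+\beta_2+Q_1\right)+\sup_{t\geq 0}\|f(t,0,0)\|+\gamma_1+\gamma_2\, ,$$
and the contraction constant $2\left(L_f(R)+\beta_1+\beta_2+Q_1\right)$, whereas condition $(T_2)$ only provides
$$2R\left(L_f(R)+Q_1\right)+\sup_{t\geq 0}\|f(t,0,0)\|+\gamma_1+\gamma_2<R\, ;$$
it says nothing about the terms $2R(\beta_1+\beta_2)$ that you (correctly, under a literal reading of the hypotheses) picked up from the $(\nu_i,\tilde{\nu}_i)$-Lipschitz bounds of the almost automorphic parts $B_i^a$ in {\bf (H5)}. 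So the assertions ``which is $\leq R$ by the choice of $R$ dictated by $(T_2)$'' and ``the strict inequality forced by $(T_2)$ makes this multiplier strictly less than $1$'' are non sequiturs: the constant you derived is strictly larger than the one $(T_2)$ controls. To close the argument you must do one of two things: (i) read {\bf (H6)} as a Lipschitz hypothesis on the full kernels $B_i$ rather than only on the ergodic parts $B_{i,0}^{\theta_i}$ — this is what the paper implicitly does, since its companion result for (\ref{eqNew}) (Theorem \ref{thAAA24}) uses $2(L_f+Q_1)$ as the contraction constant — in which case the $\beta_1+\beta_2$ terms never enter your estimates; or (ii) keep the literal reading of {\bf (H6)} and strengthen $(T_2)$ to $\sup_{r>0}\left(r-2rL_f(r)-2r(\beta_1+\beta_2)-2rQ_1\right)>\sup_{t\geq 0}\|f(t,0,0)\|+\gamma_1+\gamma_2$. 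Either repair is routine, but as written the chain of inequalities breaks exactly at the point you yourself flag as ``the main care'': you never show how the $\beta_1+\beta_2$ contribution gets absorbed into the left-hand side of $(T_2)$, and it cannot be.
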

We comment that, similarly to equation (\ref{eq5}), it is possible to  give a Bohr-Neugebauer's type result in the context of asymptotically almost automorphic solutions to the equation (\ref{eqNew}).

%\section{On asymptotically almost automorphic mild solution\, .}\label{aaasol}
\subsection{Asymptotically almost automorphic mild solution to the integro-differential equation (\ref{eqX})-(\ref{eqXX}).\\}\label{SecAT}

In this subsection, we study the existence of a unique asymptotically almost automorphic mild solution of the non-local integro-differential equation (\ref{eqX})-(\ref{eqXX}). Let us rewrite the integro-differential equation in the following convenient way, in which appears a causal operator:

\begin{eqnarray}
u'(t)&=& A(t)u(t)+F(t,u(t),\mathcal{B}u(t)),\ t\geq0\; , \label{eqX01}\\
u(0)&= & u_0 + h(u)\, . \label{eqXX01}
\end{eqnarray}
In which $$F(t,u(t),\mathcal{B}u(t)):= \mathcal{B}u(t) + g(t,u(t))\, ,$$ 
and 
$$\mathcal{B}u(t):=\int_0^t{B(t,s)u(s)ds}\, .$$
%
%
%\begin{equation}\label{eqX01}
%u'(t)= Au(t)+\int_0^t{B(t,s)u(s)ds + g(t,u(t))},\ t\geq0\; ,
%\end{equation}
%\begin{equation}\label{eqXX01}
%u(0)=x_0 + h(u),
%\end{equation}
where $u_0 \in \X$, $A(t):D(A(t))\subset \X\rightarrow \X , \, \; \, t\in \R^+$ and $B(t,s):D(B(t,s))\subset \X\rightarrow \X ,\ t\geq s\geq  0$ are linear operators on the Banach space $\X$; and $g(\cdot,\cdot)$  is an asymptotically almost automorphic function.

%where $u_0 \in \X$, $A:D(A)\subset \X\rightarrow \X$ and $B(t,s):D(B(t,s))\subset \X\rightarrow \X ,\ t\geq s\geq  0$ are %linear, closed and densely defined operators on the Banach space $\X$; $D(A)\subset D(B(t,s))$ for all $t\geq s \geq 0$.
%Note (\ref{eqX01})-(\ref{eqXX01})

%\subsection{Exponential Dichotomies.\\}
%Here we give a short review of  exponential dichotomies for evolution operators. 
 Let $J:=\R^+$ or $\R$, $\mathbb{X}$ a real or complex Banach space and $\mathcal{B}(\mathbb{X})$ the Banach algebra of all bounded linear operators from $\mathbb{X}$ to itself. Define the following set
$$\triangle_{J}^+:=\{(t,s)\in J \times J\; :\; t\geq s \}\; .$$

\begin{defn}
The operator valued function $U: \triangle_{J}^+ \to \mathcal{B}(\mathbb{\X})$ is say to be an {\bf evolution operator} on $J$, if it meet the following conditions:
\begin{enumerate}
\item For every $t \in J$: $U(t,t)=I$ ;
\item For all $t\geq s \geq r$ in $J$: $U(t,s)U(s,r)=U(t,r)$ ;
\item $U$ is strongly continuous.
\end{enumerate}
\end{defn}

%\begin{defn}
%A uniformly bounded and strongly continuous function $P: J \to \mathcal{B}(\mathbb{\X})$ is said to be a { \bf projection valued function}, if for each $t\in J$, $P(t)$ is a projection operator: that is $P^2(t)=P(t)$.
%\end{defn}
%Let $P: J \to \mathcal{B}(\mathbb{\X})$ a projection valued function; then for each $t\in J$, its complementary projection is defined (and denoted) by $Q(t):=I-P(t)$.\\
The collection 
$$\mathcal{U}=\{ U: \triangle_{J}^+ \to \mathcal{B}(\mathbb{\X})\; : \; U {\rm \, is \, an\, evolution\, operator } \},$$
is called an {\bf evolution family}.

%\begin{defn}
%The evolution family $\mathcal{U}$ %=\{ U: \triangle_{J}^+ \to \mathcal{B}(\mathbb{\X})\; : \; U {\rm \, is \, an\, evolution\, operator } \},$ 
%has an {\bf exponential dichotomy} (or is {\bf hyperbolic}), if there are projection valued functions $P: J \to  \mathcal{B}(\mathbb{\X})$ and constants $N\geq 1$ and $ \delta >0$ such that %the following conditions holds
%\begin{enumerate}
%\item For all $(t, s)\in \triangle_{J}^+$: $U(t,s)P(s)=P(t)U(t,s)$ ;
%\item For each $(t, s)\in \triangle_{J}^+$, the restriction map $U(t,s)|_{Ker \, P(s)}:Ker \, P(s)\to Ker \, P(t)$ is an isomorphism, with inverse $V(s,t)$ ;
%\item $||U(t,s)P(s)||\leq N e^{-\delta(t-s)}$ and $||V(s,t)Q(t)||\leq N e^{-\delta(t-s)}$ .
%\end{enumerate}
%\end{defn}
%For more details on evolution families of operators and exponential dichotomy, we refer to the work \cite{LupaMe} and further references therein.

%First of all, since our equation is nonatonomous, we need to impose some 
The following technical assumptions are needed in the study of equation (\ref{eqX01})-(\ref{eqXX01}). Because of the nature o the equation, the assumptions are posed on the semi axis $\mathbb{R}^+$; but, it also can be given in the real line (see for instance \cite{ZCWLin,HSDWLGMN,ManSch}), we will need this case in the final section of this work.
% in order to obtain, for example, the Bi-almost automorphicity of the evolution family of operators $\{ U(t,s)\, :\, t>s>0 \}$.

\begin{enumerate}
\item [{\bf (AT)}] The Acquistapace-Terrini conditions: there exists constants $\lambda_0 \geq 0,\,  \theta \in (\frac{\pi}{2},\pi), K_1, K_2 \geq 0$ and $\beta_1,\beta_2 \in (0,1]$ with $\beta_1 + \beta_2 >1$ such that for $t,s \in \mathbb{R}^+$ and $\lambda \in \Sigma_{\theta}=\{\lambda \in \mathbb{C}-\{0\}\, : \, |arg(\lambda)|\leq \theta\}$ \, we have

$$\Sigma_\theta \cup \{0\} \subset \rho (A(t)-\lambda_0), \; \; ||R(\lambda,A(t)-\lambda_0)||\leq \dfrac{K_1}{1+|\lambda|}\; \,$$
and
$$||[A(t)-\lambda_0]R(\lambda,A(t)-\lambda_0)[R(\lambda_0,A(t))-R(\lambda_0,A(s))]||\leq K_2|t-s|^{\beta_1}|\lambda|^{-\beta_2}\, .$$

\end{enumerate}
It is well know that, under this condition there exists a unique evolution family $\mathcal{U}$ %=\{U(t,s)\, :\; t\geq s \geq 0\}$ 
which governs the linear equation $$x'(t)=A(t)x(t)\, ,$$
see \cite{Acq,AcqTerr} for original references.\\

 The Acquistapace-Terrini conditions have been used extensively in the study of existence of mild solutions to several nonautonomous evolution equations, see for instance the cited references \cite{ZCWLin,HSDWLGMN,ManSch} and other references therein. \\

\noindent {\bf (bAA)} The application $(t,s)\to U(t,s)x$ is Bi-almost automorphic uniformly for all $x$ on bounded subset of $\mathbb{X}$.\\

\noindent {\bf (Ex)} The evolution family $U(t,s)$ is exponentially stable; That is, there exists $M>0$ and $\delta >0$ such that $||U(t,s)||\leq M e^{-\delta (t-s)}$, for $t \geq s$.\\

\begin{defn}
A mild solution of the equation (\ref{eqX01})- (\ref{eqXX01}), is a continuous function $u:[0,+\infty) \to \X$ which satisfies the following integral equation
\begin{equation}\label{SolED}
 u(t)=U(t,0)\left( u_0+h(u)\right)+\int_0^t U(t,s)F(s,u(s),\mathcal{B}u(s))ds\, .
\end{equation}
\end{defn}
Now, let us apply the techniques on existence and uniqueness studied before to the integral equation (\ref{SolED}). Before that, let us define $B_0:\R^+ \times \R^+ \times \X \to \X$ 
$$B_0(t,s,x):=B(t,s)x \, ;$$
which comes from the integral representation of the causal operator $\mathcal{B}$, and also define the following constant
$$C_B=\sup_{s\geq 0}\int_0^s||B(s,\tau)||d\tau <\infty \, .$$

\begin{thm}\label{theoaaa1} Let us suppose that $B_0$ satisfies condition {\bf (H5)}, $F \in AAA(\R^+\times \X \times\X;\X)$, and also that conditions {\bf (AT), (bAA)} and {\bf (Ex)} holds. Let %$\{R(t)\}_{t\geq0}$ the resolvent operator that satisfies condition {\bf (A)}, 
$\varrho >0$, and define the set $\Delta_0=\{y \in AAA(\R^+,\X):||y-y_0||_{\infty}\leq \varrho\},$ where 
$$ y_0(t):=U(t,0)(u_0+g(0))+\int_{0}^{t}U(t,s)F(s,0,0)ds\, .$$
Suppose that $||y_0||_{\infty}\leq \varrho$, there exists positive constants $L_F, L_g $ such that $g: AAA(\R^+; \X)\to \X $ is $L_g$-Lipschitz in $\Delta_0$, and
$$||F(t,y(t),z(t))-F(t,x(t),w(t))||\leq L_F\left(||y(t)-x(t)||+  ||z(t)-w(t)||\right) ,\ y,x,z,w \in \Delta_0, t\in\R\, .$$
Furthermore, the following inequality holds:
\begin{equation}\label{eq33}
ML_g+\dfrac{M}{\delta}\left(1+C_B \right) L_F \leq \dfrac{\varrho}{\varrho +||y_0||_{\infty}}\, .
\end{equation}
%L_F < \dfrac{\varrho \delta}{M(1+C_B)(\varrho+||y_0||_{\infty})},
%\end{enumerate}
Then equation (\ref{eqX01})- (\ref{eqXX01}) has a unique asymptotically almost automorphic mild solution.
\end{thm}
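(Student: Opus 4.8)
The plan is to reformulate the mild-solution identity (\ref{SolED}) as a fixed-point equation and invoke Theorem \ref{teos1}. Define the operator $\mathcal{T}:AAA(\R^+;\X)\to AAA(\R^+;\X)$ by
$$\mathcal{T}u(t)=U(t,0)\left(u_0+h(u)\right)+\int_0^t U(t,s)F(s,u(s),\mathcal{B}u(s))\,ds\, ;$$
a fixed point of $\mathcal{T}$ is, by definition, a mild solution of (\ref{eqX01})--(\ref{eqXX01}). Evaluating at $u\equiv 0$ shows that $\mathcal{T}(0)$ coincides with the function $y_0$ in the statement, so that $\Delta_0$ is precisely the closed ball attached to $\mathcal{T}(0)$ demanded by Theorem \ref{teos1}.

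First I would verify that $\mathcal{T}$ is well defined, i.e.\ that it maps $AAA(\R^+;\X)$ into itself. By {\bf (Ex)} we have $\|U(t,0)\|\le Me^{-\delta t}\to 0$ as $t\to+\infty$, so the boundary term $t\mapsto U(t,0)(u_0+h(u))$ belongs to $C_0(\R^+;\X)\subset AAA(\R^+;\X)$. For the integral term I would proceed in three stages: first, if $u\in AAA(\R^+;\X)$ then the causal term $\mathcal{B}u(t)=\int_0^t B(t,s)u(s)\,ds$ is again asymptotically almost automorphic, which is an instance of the invariance Lemma \ref{l23} applied to the kernel $B_0(t,s,x)=B(t,s)x$ under hypothesis {\bf (H5)}; second, since $F\in AAA(\R^+\times\X\times\X;\X)$, the composition Lemma \ref{l22} yields $s\mapsto F(s,u(s),\mathcal{B}u(s))\in AAA(\R^+;\X)$; and third, the convolution $t\mapsto\int_0^t U(t,s)F(s,u(s),\mathcal{B}u(s))\,ds$ preserves $AAA(\R^+;\X)$, once more by Lemma \ref{l23}, now with the operator-valued kernel $U(t,s)$, which is $\lambda$-bounded with $\lambda(t,s)=Me^{-\delta(t-s)}$ thanks to {\bf (Ex)} and Bi-almost automorphic thanks to {\bf (bAA)}.

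Next I would estimate the Lipschitz constant of $\mathcal{T}$ on $\Delta_0$. For $u,w\in\Delta_0$, exponential stability together with the $L_g$-Lipschitz continuity of the nonlocal term gives $\sup_{t\geq 0}\|U(t,0)(h(u)-h(w))\|\le ML_g\|u-w\|_{\infty}$. For the integral term one uses $\int_0^t Me^{-\delta(t-s)}\,ds\le M/\delta$, the $L_F$-Lipschitz continuity of $F$, and the estimate $\|\mathcal{B}u(s)-\mathcal{B}w(s)\|\le C_B\|u-w\|_{\infty}$ coming from the definition of $C_B$. Collecting the bounds produces
$$\|\mathcal{T}u-\mathcal{T}w\|_{\infty}\le\left(ML_g+\frac{M}{\delta}\left(1+C_B\right)L_F\right)\|u-w\|_{\infty}\, ,$$
so the Lipschitz constant of $\mathcal{T}$ on $\Delta_0$ is $L_{\mathcal{T}}=ML_g+\frac{M}{\delta}(1+C_B)L_F$. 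By hypothesis (\ref{eq33}) this constant obeys $L_{\mathcal{T}}\le\varrho/(\varrho+\|y_0\|_{\infty})$, which, together with $\|y_0\|_{\infty}\le\varrho$, is exactly the hypothesis of Theorem \ref{teos1}; that theorem then yields a unique fixed point of $\mathcal{T}$ in $\Delta_0$, namely the unique asymptotically almost automorphic mild solution of (\ref{eqX01})--(\ref{eqXX01}).

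The main obstacle I anticipate is the well-definedness, specifically the third stage above: showing that convolution against the \emph{operator-valued} evolution family $U(t,s)$ sends $AAA(\R^+;\X)$ back into itself. Since $U(t,s)$ is Bi-almost automorphic only in the sense of {\bf (bAA)} and its decay is furnished by {\bf (Ex)} rather than by an a priori $\lambda$-bound, one must verify that {\bf (bAA)} and {\bf (Ex)} together supply exactly the Bi-almost automorphy, $\lambda$-boundedness and integrability demanded by {\bf (H5)}, so that Lemma \ref{l23} is genuinely applicable with $U$ playing the role of the kernel $B_1^a$; this identification is where the argument must be carried out with care.
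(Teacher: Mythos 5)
Your proposal is correct and follows essentially the same route as the paper's proof: the same fixed-point operator $\Gamma y(t)=U(t,0)(u_0+g(y))+\int_0^t U(t,s)F(s,y(s),\mathcal{B}y(s))\,ds$, well-definedness via Lemmas \ref{l22} and \ref{l23}, the same Lipschitz bound $ML_g+\frac{M}{\delta}(1+C_B)L_F$, and the conclusion by the contraction argument on $\Delta_0$ that Theorem \ref{teos1} packages (the paper writes out the invariance and contraction steps explicitly and cites Banach's theorem, which is the same thing). Your extra care in checking that {\bf (bAA)} and {\bf (Ex)} make the kernel $(t,s,x)\mapsto U(t,s)x$ satisfy the hypotheses of Lemma \ref{l23} is a point the paper glosses over, and it goes through exactly as you sketch.
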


\begin{proof}
Let us define the following operator:
\begin{eqnarray}\label{EQITD03}
\Gamma: AAA(\R^+;\X) &\to& AAA(\R^+;\X)\\ 
\Gamma y(t)&=& U(t,0)(u_0+g(y))+\int_{0}^{t}U(t,s)F(s,y(s),\mathcal{B}y(s))ds \nonumber
\end{eqnarray}
From Lemmas \ref{l22}, \ref{l23} we have that $\Gamma$ is a well defined operator. Let us prove that 
 $\Gamma$ has a fixed point in $\Delta_0$.\\
1) Let $y \in \Delta_0$ then:
\begin{eqnarray*}
||\Gamma y(t)-y_0(t)||&\leq & ||U(t,0)\left( g(y)-g(0)\right)|| + \int_{0}^{t}||U(t,s)||\, ||F(s,y(s),\mathcal{B}y(s))-F(s,0,0)||ds\\
&\leq& ML_g||y||_{\infty}+L_F \int_{0}^{t}||U(t,s)|| \left( ||y(s)||+||\mathcal{B}y(s)|| \right) ds\\
&\leq& \left(ML_g+ \dfrac{M}{\delta}(1+C_B)L_F \right) (\varrho+||y_0||_{\infty})\\
&\leq&\varrho \, .
\end{eqnarray*}
That is, $\Gamma(\Delta_0)\subseteq\Delta_0$\, .\\

\noindent 2) Let $y_1,y_2 \in \Delta_0$ then:
\begin{eqnarray*}
||\Gamma y_1(t)-\Gamma y_2(t)||&\leq & \int_{0}^{t}||U(t,s)||\, ||F(s,y_1(s), \mathcal{B}y_1(s))-F(s,y_2(s),\mathcal{B}y_2(s))||ds\\
&+& ||U(t,0) \left( g(y_1)+g(y_2)\right) || \\
&\leq& ML_g||y_1-y_2||_{\infty} + L_F (1+C_B) \int_{0}^{t}||U(t,s)||ds||y_1-y_2||_{\infty}\\
&\leq& \left( ML_g+ L_F (1+C_B) \dfrac{M}{\delta} \right) ||y_1-y_2||_{\infty}.
\end{eqnarray*}
Therefore, $\Gamma$ is a contraction on $\Delta_0$ and the conclusion follows from the Banach's contraction Theorem. 
\end{proof}

%\paragraph{Proof} Let us consider the operator $\Gamma$ defined in (\ref{SolED}).  Then, 
%%\begin{eqnarray*}
%%\Gamma :AAA(\R^+;\X) &\to& AAA(\R^+;\X)  \\
%%\Gamma y(t)&=& R(t)x_0+\int_{0}^{t}R(t-s)g(s,y(s))ds.
%%\end{eqnarray*}
%it follows from Lemmas  \ref{l22} and \ref{l23} that $\Gamma$ is a well defined operator .\\
%From (\ref{eq36}), there exist  $S>0$ such that:
%\begin{equation}\label{EQQ001}
% \dfrac{\delta S}{M}-SL_g(S)>C+\delta ||x_0||.
%\end{equation}
%Le us consider the set $\Omega=\{y\in AAA(\R;\X):||y||_{\infty}\leq S\}$, we prove that $\Gamma$ has a unique
%fixed point in $\Omega$. In fact:
%\begin{itemize}
%\item [1)] $\Gamma (\Omega)\subseteq \Omega$. Let $y\in \Omega$, then:
%\begin{eqnarray*}
%||(\Gamma y)(t)||&\leq& ||R(t)x_0||+\int_{0}^{t}|R(t-s)|||g(s,y(s))-g(s,0)||ds+\int_{0}^{t}|R(t-s)|||g(s,0)||ds\\
%&\leq& M||x_0||+S L_g(S)\dfrac{M}{\delta}+ C\dfrac{M}{\delta}\\
%&\leq& S.
%\end{eqnarray*}
%The last inequality follows from (\ref{EQQ001}).
%\item[2)] $\Gamma$ is contractive in $\Omega$. Let $y_1,y_2 \in \Omega$, then:
%\begin{eqnarray*}
%||\Gamma y_1(t)-\Gamma y_2(t)||&\leq& \int_{0}^{t}|R(t-s)|||g(s,y_1(s))-g(s,y_2(s))||ds\\
%&\leq& L_g(S)\dfrac{M}{\delta}||y_1-y_2||_{\infty}.
%\end{eqnarray*}
%Moreover from (\ref{EQQ001}) we have:
%$$\dfrac{M}{\delta}S L_g(S)< \dfrac{M}{\delta} (S(L_g(S))+C+\delta||x_0||_{\infty})<S,$$
%so $\dfrac{M}{\delta}L_g(S)<1.$ The Theorem holds. $\square$
%\end{itemize}

%\noindent Como consecuencia a éste Teorema se puede conseguir el siguiente corolario

From Theorem \ref{teos2} we can obtain for the equation (\ref{eqX}) the following result
\begin{thm}\label{theoaaa12}
Suppose that conditions {\bf (AT), (bAA)} and {\bf (Ex)} holds,  $F \in AAA(\R^+\times\X \times\X;\X)$ is $L_F$-Lipschitz. 
%Let $\Gamma$ the operator defined in (\ref{EQITD03}), let $\{R(t)\}_{t\geq0}$ be the resolvent operator that satisfies
%condition {\bf (A)} and $g \in AAA(\R^+\times\X;\X)$  $L_g$-Lipschitz in the second variable. 
Let $\rho>0$ and the set 
$\Delta_0=\{y\in AAA(\R^+;\X): ||y-y_0||_{\infty}\leq\rho\},$ where 
 $$y_0(t)=U(t,0)(u_0+g(0))+\int_0^tU(t,s)F(s,0,0)ds\, .$$
Further suppose that $0<\theta=(1-\xi_0)^{-1}||\Gamma y_0-y_0||_{\infty}\leq\rho ,$ where $\Gamma$ is defined en (\ref{EQITD03}), $g: AAA(\R^+; \X)\to \X $ is $L_g$-Lipschitz in $\Delta_0$ and 
$$\xi_0:=ML_g+\dfrac{M}{\delta}L_F(1+C_B)\, .$$
 Then equation (\ref{eqX01})- (\ref{eqXX01}) has a unique mild solution  $y\in AAA(\R^+;\X)$ such that
$||y-y_0||_{\infty}\leq \rho.$\\
\end{thm}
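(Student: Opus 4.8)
The plan is to recast the integro-differential problem as a fixed-point equation $\Gamma u = u$ for the operator $\Gamma$ defined in (\ref{EQITD03}) and then to invoke Theorem \ref{teos2} with $T=\Gamma$ and Lipschitz constant $L_T=\xi_0$. First I would record that $\Gamma$ is well defined, i.e.\ that it maps $AAA(\R^+;\X)$ into itself: exactly as in the proof of Theorem \ref{theoaaa1}, this follows from Lemmas \ref{l22} and \ref{l23} together with conditions {\bf (AT)}, {\bf (bAA)} and {\bf (Ex)}. Next I would check compatibility with the hypotheses of Theorem \ref{teos2}: since $\mathcal{B}0\equiv 0$, evaluating $\Gamma$ at the zero function yields $\Gamma(0)=y_0$, so $y_0$ plays the role of $T(0)$ and $\Delta_0$ is precisely the closed ball centered at $\Gamma(0)$ of radius $\rho$.

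The heart of the argument is the Lipschitz estimate, which is the same computation as in part~2 of the proof of Theorem \ref{theoaaa1}. For $y_1,y_2\in\Delta_0$ I would split
$$\|\Gamma y_1(t)-\Gamma y_2(t)\|\leq \|U(t,0)(g(y_1)-g(y_2))\|+\int_0^t\|U(t,s)\|\,\|F(s,y_1(s),\mathcal{B}y_1(s))-F(s,y_2(s),\mathcal{B}y_2(s))\|\,ds.$$
The first term is bounded by $ML_g\|y_1-y_2\|_\infty$, using {\bf (Ex)} (so that $\|U(t,0)\|\leq M$) and the $L_g$-Lipschitz property of $g$ on $\Delta_0$. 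For the integral term, the $L_F$-Lipschitz property of $F$ produces the factor $\|y_1(s)-y_2(s)\|+\|\mathcal{B}y_1(s)-\mathcal{B}y_2(s)\|$, and the causal contribution is controlled by $\|\mathcal{B}y_1(s)-\mathcal{B}y_2(s)\|\leq\int_0^s\|B(s,\tau)\|\,\|y_1(\tau)-y_2(\tau)\|\,d\tau\leq C_B\|y_1-y_2\|_\infty$. Combining these with $\|U(t,s)\|\leq Me^{-\delta(t-s)}$ and $\int_0^t Me^{-\delta(t-s)}\,ds\leq M/\delta$ bounds the second term by $\tfrac{M}{\delta}L_F(1+C_B)\|y_1-y_2\|_\infty$. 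Adding the two contributions gives
$$\|\Gamma y_1-\Gamma y_2\|_\infty\leq\Big(ML_g+\tfrac{M}{\delta}L_F(1+C_B)\Big)\|y_1-y_2\|_\infty=\xi_0\,\|y_1-y_2\|_\infty,$$
so $\Gamma$ is Lipschitz on $\Delta_0$ with constant $L_\Gamma=\xi_0$.

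Finally I would verify the remaining hypotheses of Theorem \ref{teos2}. The standing assumption $0<\theta=(1-\xi_0)^{-1}\|\Gamma y_0-y_0\|_\infty\leq\rho$ forces $\xi_0<1$, since otherwise $(1-\xi_0)^{-1}$ would not be positive and $\theta$ could not be positive; and the chain $0<\theta\leq\rho$ is exactly the quantitative condition required in Theorem \ref{teos2}. Applying that theorem then yields a unique fixed point $y\in\Delta_0$ of $\Gamma$, which by the definition of $\Gamma$ and of a mild solution is the unique asymptotically almost automorphic mild solution of (\ref{eqX01})--(\ref{eqXX01}) with $\|y-y_0\|_\infty\leq\rho$. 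I expect the only genuinely delicate point to be the invariance of $AAA(\R^+;\X)$ under the convolution $\int_0^t U(t,s)F(s,\cdot)\,ds$, where the asymptotic almost automorphy must be preserved; everything else is a direct Lipschitz computation assembled from {\bf (Ex)} and the finiteness of $C_B$. Since that invariance is already supplied by Lemmas \ref{l22} and \ref{l23}, the obstacle is reduced to bookkeeping.
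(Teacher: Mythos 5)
Your proposal is correct and takes essentially the same route as the paper: the paper's own proof simply inlines the argument of Theorem \ref{teos2} (showing that the ball of radius $\theta$ centered at $y_0$ is invariant under $\Gamma$ and that $\Gamma$ is a $\xi_0$-contraction), which is exactly the abstract result you invoke, with the identical Lipschitz computation producing $\xi_0=ML_g+\frac{M}{\delta}L_F(1+C_B)$. Your two supplementary checks --- that $\Gamma(0)=y_0$ because $\mathcal{B}0\equiv 0$, and that $0<\theta$ forces $\xi_0<1$ --- are correct points the paper leaves implicit.
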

\begin{proof}
It follows from the hypothesis that the operator $\Gamma: AAA(\R^+ ; \X) \to  AAA(\R^+ ; \X)$, such that 
$$\Gamma y(t)=U(t,0)(u_0+g(y))+\int_0^t U(t,s)F(s,y(s),\mathcal{B}y(s))ds\, ,$$
is well defined. Now, consider the following closed set
$$\Theta_0=\{y \in AAA(\R^+; \X)\, : \, ||y-y_0||_{\infty} \leq \theta\}\, ,$$
and let us look for a fixed point of $\Gamma$ in $\Theta_0$.

\noindent First note that if $y \in \Theta_0$, then 
\begin{eqnarray*}
||\Gamma y(t)-y_0(t)|| &\leq & ||\Gamma y(t)- \Gamma y_0(t)||+ ||\Gamma y_0(t)-y_0(t)|| \\
&\leq & ||\Gamma y_0(t)-y_0(t)||+ ||U(t,0) \left( g(y)-g(y_0) \right)||+\\
&+& \int_0^t ||U(t,s)\left( F(s,y(s), \mathcal{B}y(s))-F(s,y_0(s), \mathcal{B}y_0(s)) \right) || ds\\
& \leq & ML_g ||y-y_0||_{\infty}+L_F\dfrac{M}{\delta} \left( 1+C_B\right)||y-y_0||_{\infty}+ ||\Gamma y_0-y_0||_{\infty}\\
& \leq & \left( ML_g+L_F\dfrac{M}{\delta}(1+C_B)\right)||y-y_0||_{\infty}+ ||\Gamma y_0-y_0||_{\infty}\\
&\leq & \theta \, .
\end{eqnarray*}
This implies that $\Gamma (\Theta_0) \subseteq \Theta_0$. On the other hand, evidently the operator $\Gamma$ is Lipschitz with Lipschitz's constant $\xi_0<1$.
\end{proof}

The proof of the following theorem is the same as \cite[Theorem 2.7]{16}; in fact, the result is a little generalization of the cited one.

\begin{thm}\label{th33}
Suppose that conditions {\bf (AT), (bAA)} and {\bf (Ex)} holds,  $F \in AAA(\R^+\times\X \times\X;\X)$, $g:C(\mathbb{R}^+, \X) \to \X$ and there exists functions $L_g, \ L_F :\R^+ \to \R^+$  such that:
\begin{eqnarray}\label{eq35}
||F(t,x,y)-F(t,z,w)|| & \leq& L_F(r)\left( ||x-z|| + ||y-w||\right)\\
||g(x)-g(y)||& \leq & L_g(r)  ||x-y||, \, \nonumber
\end{eqnarray}
for all $ \ x,y,z,w \in \overline{B}(0,r) $ (the closed ball of radius r). 
%let $\{R(t)\}_{t\geq0}$ be the resolvent operator that satisfies the condition {\bf (A)}. 
Let $C=\sup_{t\geq 0}||F(t,0,0)||$; if we have the inequality:
\begin{equation}\label{eq36}
\sup_{r>0}(\dfrac{\delta r}{M}-\delta rL_g(r)-rL_F(r)(1+C_B))>C+\delta\left( ||y_0||+||g(0)||\right) ,
\end{equation}
then equation (\ref{eqX01})- (\ref{eqXX01}) has a unique asymptotically almost automorphic mild solution.
\end{thm}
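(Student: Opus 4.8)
The plan is to realise the mild solutions of (\ref{eqX01})-(\ref{eqXX01}) as fixed points of a single operator and then apply the Banach contraction principle on a ball whose radius is produced by hypothesis (\ref{eq36}), exactly in the spirit of the proof of Theorem \ref{th25}. First I would introduce the operator $\Gamma:AAA(\R^+;\X)\to AAA(\R^+;\X)$,
$$\Gamma y(t)=U(t,0)\big(u_0+g(y)\big)+\int_0^t U(t,s)F(s,y(s),\mathcal{B}y(s))\,ds\, ,$$
so that, by the definition (\ref{SolED}) of mild solution, $y$ solves (\ref{eqX01})-(\ref{eqXX01}) if and only if $\Gamma y=y$. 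That $\Gamma$ maps $AAA(\R^+;\X)$ into itself is not reproved here: it follows from conditions {\bf (AT)}, {\bf (bAA)} and {\bf (Ex)} together with Lemmas \ref{l22} and \ref{l23}, precisely as in Theorems \ref{theoaaa1} and \ref{theoaaa12}.

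The decisive step is to extract a working radius from (\ref{eq36}). Since the supremum on its left-hand side is strictly larger than $C+\delta(\|y_0\|+\|g(0)\|)$, there is some $R>0$ with
$$\frac{\delta R}{M}-\delta R\,L_g(R)-R\,L_F(R)(1+C_B)>C+\delta\big(\|y_0\|+\|g(0)\|\big)\, .$$
I would fix this $R$ and work on $\Omega_0=\{y\in AAA(\R^+;\X):\|y\|_\infty\le R\}$, a complete metric space as a closed subset of the Banach space $AAA(\R^+;\X)$, verifying on it the two hypotheses of the contraction principle.

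For invariance $\Gamma(\Omega_0)\subseteq\Omega_0$, I would bound $\|\Gamma y(t)\|$ for $y\in\Omega_0$ by splitting off $g(0)$ and $F(s,0,0)$, using the exponential estimate $\|U(t,s)\|\le Me^{-\delta(t-s)}$ from {\bf (Ex)}, the Lipschitz inequalities (\ref{eq35}), the elementary bound $\|\mathcal{B}y(s)\|\le C_B\|y\|_\infty$ coming from the definition of $C_B$, and $\int_0^tMe^{-\delta(t-s)}\,ds\le M/\delta$. Keeping $u_0+g(0)$ together and using $\|u_0+g(0)\|=\|y_0(0)\|\le\|y_0\|_\infty$, this produces
$$\|\Gamma y\|_\infty\le M\|y_0\|_\infty+M\,L_g(R)\,R+\frac{M}{\delta}C+\frac{M}{\delta}L_F(R)(1+C_B)\,R\, ,$$
and, after multiplying the displayed inequality for $R$ by $M/\delta$, this right-hand side is $\le R$. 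For the contraction property I would estimate $\|\Gamma y_1(t)-\Gamma y_2(t)\|$ with the same ingredients, now with the constant terms absent, arriving at the Lipschitz constant $\xi=M\,L_g(R)+\frac{M}{\delta}L_F(R)(1+C_B)$; dividing the inequality for $R$ by $R$ shows at once that $\xi<1$. The Banach fixed point theorem then yields a unique fixed point of $\Gamma$ in $\Omega_0$, which is the sought unique asymptotically almost automorphic mild solution.

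The step I expect to require the most care is the invariance bookkeeping: one must ensure that the pair $(y(s),\mathcal{B}y(s))$ stays in a ball on which (\ref{eq35}) is available (note that $\mathcal{B}y(s)$ may have norm up to $C_B R$ rather than $R$, so the radius at which $L_F$ is read off must be chosen accordingly), and the constants $\|u_0\|$, $\|y_0\|$ and $\|g(0)\|$ must be reconciled through the identity $y_0(0)=u_0+g(0)$ so that the final estimate lines up with (\ref{eq36}). Everything else is the routine verification that the single radius $R$ furnished by (\ref{eq36}) simultaneously gives invariance of $\Omega_0$ and a contraction factor strictly below $1$.
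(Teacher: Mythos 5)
Your proposal is correct and takes essentially the same approach as the paper: the paper gives no independent proof of this theorem (it states that the proof is the same as that of \cite[Theorem 2.7]{16}), and that argument --- set up the fixed-point operator $\Gamma y(t)=U(t,0)(u_0+g(y))+\int_0^t U(t,s)F(s,y(s),\mathcal{B}y(s))\,ds$, extract a radius $R$ from the strict inequality (\ref{eq36}), verify invariance and contraction on the ball $\{y\in AAA(\R^+;\X):\|y\|_\infty\le R\}$, and apply Banach's principle --- is precisely what you reconstruct, in line with the paper's explicit proofs of Theorems \ref{theoaaa1} and \ref{theoaaa12}. The one delicate point you flag, namely that $\mathcal{B}y(s)$ may have norm up to $C_B R$ so that when $C_B>1$ the constant $L_F$ should really be read at radius $\max(1,C_B)R$, which (\ref{eq36}) does not literally control, is glossed over equally in the source argument the paper invokes (where no causal term $\mathcal{B}$ is present), so your treatment matches the paper's own level of rigor.
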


\begin{cor}\label{corth33}
Suppose that conditions of theorem \ref{th33} holds and that $L_F(\cdot)=L_F$ and $L_g(\cdot)=L_g$ are positive constants. 
% $g \in AAA(\R^+\times\X;\X)$ and satisfies:
%%\begin{eqnarray*} 
%$||g(t,x)-g(t,y)||\leq L_g||x-y||$
%%\end{eqnarray*}
% for all  $t\geq 0, \ x,y \in \X \ {\rm with} \ ||x||,||y||\leq r$ and $\{R(t)\}_{t\geq0}$ be the resolvent operator 
%that satisfies condition {\bf (A)}. 
If we have the inequality 
%\begin{eqnarray*}
$$\dfrac{\delta }{M}>\delta L_g +(1+C_B)L_F\, , $$
then equation (\ref{eqX01})- (\ref{eqXX01}) has a unique asymptotically almost automorphic mild solution.
%\end{eqnarray*}
%entonces la ecuación (\ref{e11})-(\ref{e12}) tiene una única solución mild que es asintóticamente casi
%automórfica.
\end{cor}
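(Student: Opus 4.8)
The plan is to obtain Corollary \ref{corth33} as a direct specialization of Theorem \ref{th33}: all the structural hypotheses (\textbf{(AT)}, \textbf{(bAA)}, \textbf{(Ex)}, $F\in AAA(\R^+\times\X\times\X;\X)$ and the Lipschitz estimates (\ref{eq35})) are assumed to hold, so the only thing that must be checked is that the constant Lipschitz data $L_F,L_g$ together with the inequality $\dfrac{\delta}{M}>\delta L_g+(1+C_B)L_F$ force the remaining hypothesis (\ref{eq36}) of Theorem \ref{th33}.

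First I would substitute $L_F(\cdot)=L_F$ and $L_g(\cdot)=L_g$ into the left-hand side of (\ref{eq36}) and factor out $r$, writing, for each $r>0$,
$$\dfrac{\delta r}{M}-\delta r L_g-r L_F(1+C_B)=r\left(\dfrac{\delta}{M}-\delta L_g-L_F(1+C_B)\right).$$
By the standing hypothesis the bracketed coefficient is strictly positive, hence the right-hand side tends to $+\infty$ as $r\to+\infty$ and consequently
$$\sup_{r>0}\left(\dfrac{\delta r}{M}-\delta r L_g-r L_F(1+C_B)\right)=+\infty.$$
On the other side of (\ref{eq36}), the quantity $C+\delta(||y_0||+||g(0)||)$ is a fixed finite real number: indeed $C=\sup_{t\ge0}||F(t,0,0)||<+\infty$ because $F$ is bounded (being asymptotically almost automorphic), while $||y_0||<+\infty$ and $||g(0)||<+\infty$. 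Thus inequality (\ref{eq36}) holds trivially, and Theorem \ref{th33} yields the unique asymptotically almost automorphic mild solution of (\ref{eqX01})-(\ref{eqXX01}).

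There is essentially no analytic obstacle in this argument; the only point needing care is to recognize that the positivity of $\dfrac{\delta}{M}-\delta L_g-L_F(1+C_B)$ — which is exactly the hypothesis of the corollary — is precisely what makes the supremum in (\ref{eq36}) diverge and therefore dominate the finite right-hand side. Everything else is inherited verbatim from Theorem \ref{th33}, so no fixed-point or invariance computation has to be redone.
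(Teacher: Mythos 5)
Your proposal is correct and matches the paper's intended argument: the paper states Corollary \ref{corth33} without an explicit proof, but its proof of the analogous corollary after Theorem \ref{th25} proceeds in exactly the same way, observing that positivity of the constant coefficient makes the left-hand side of the supremum condition grow linearly in $r$ while the right-hand side ($C+\delta(\|y_0\|+\|g(0)\|)$, finite since $F$ is asymptotically almost automorphic and hence bounded) stays fixed. Your only stylistic difference is asserting the supremum equals $+\infty$ rather than exhibiting an $R_0$ beyond which the inequality holds, which is the same idea.
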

\subsection{A particular case: autonomous integro-differential equation. \\}
%$$MODIFICAR \, \, \, \, LO \, \, \, \, QUE \, \, \, \, SIGUE\, \, \, \, \cdots$$

Let us give some insights of the following particular version of the integro-differential equation (\ref{eqX01})- (\ref{eqXX01}).
\begin{eqnarray}
u'(t)&=& Au(t)+ \int_0^t B(t-s)u(s)ds+f(t,u(t)),\ t\geq0\; , \label{eqAX01}\\
u(0)&= & u_0 + g(u)\, ; \label{eqAXX01}
\end{eqnarray}
where $u_0 \in \X$, $A$ and $B(t),\ t \geq  0$ are linear, closed and densely defined operators on the Banach space $\X$; and $A,B(t),f,g$ satisfies appropriate conditions.
%; $D(A)\subset D(B(t,s))$ for all $t\geq s \geq 0$.

Existence and uniqueness of the asymptotically almost automorphic solution to the autonomous integro-differential equation (\ref{eqAX01})-(\ref{eqAXX01}) has been studied in \cite{16}. Here, we propose two different theorems to the ones presented in \cite{16}. First let us consider some basic definitions and assumptions: % in order to ensure the existence and uniqueness of the solution in $AAA(\R^+;\X)$.
%in which $$F(t,u(t),\mathcal{B}u(t)):= \mathcal{B}u(t) + g(t,u(t))\, ,$$ 
%and 
%$$\mathcal{B}u(t):=\int_0^t{B(t,s)u(s)ds}\, .$$
%
%\begin{equation}\label{eqX01}
%u'(t)= Au(t)+\int_0^t{B(t,s)u(s)ds + g(t,u(t))},\ t\geq0\; ,
%\end{equation}
%\begin{equation}\label{eqXX01}
%u(0)=x_0 + h(u),
%\end{equation}
%A natural way to study integro-differential equations is with the help of resolvent operators.

\begin{defn}\label{d14}\ {\rm A family \{$R(t):t\geq0$\} of continuous linear operators on $\X$ is called a resolvent
operator for the equation \ (\ref{eqAX01}) \ if, and only if:}
\begin{enumerate}
\item [{\rm (R1)}] {\rm $R(0)=I$, is the identity operator on $\X$.}
\item [{\rm (R2)}] {\rm for all $x\in \X$, the operator $t \rightarrow R(t)x$ is a continuous function on
$[0,+\infty)$.}
\item [{\rm (R3)}] {\rm For all $t\geq0$ the operator $R(t)$ is continuous on $Y$, and for all $y\in Y$, the application $t\rightarrow R(t)y$ belongs to $C([0,+\infty);Y)\cap C^1([0,+\infty);\X)$  and satisfies}
$$\frac{d}{dt}R(t)y=AR(t)y+\int_0^tB(t-s)R(s)yds=R(t)Ay+\int_0^tR(t-s)B(s)yds ,\ t\geq0, $$
\end{enumerate}

\noindent {\rm where $Y=D(A)=B(t)$ for all $t\geq 0$ and is equipped with the graphic norm. 
%: $|||y|||:=||Ay||+||y||$ where  $||\cdot||$ is the norm in $\X$. 
We refer to \cite{19,JPSS} to find details on resolvent operators and same conditions on their existence.}
\end{defn}

\noindent If the resolvent operator  $R(\cdot)$ of the equation (\ref{eqAX01}) exist, then the mild solution of equations (\ref{eqAX01})-(\ref{eqAXX01}) is defined as follows:
\begin{defn}\label{d15}\ {\rm A function $u\in C(\R^+;\X)$ is called a mild solution of the equation
(\ref{eqAX01})-(\ref{eqAXX01}) if}
$$u(t)=R(t)\left( u_0+g(u) \right)+\int_0^tR(t-s)f(s,u(s))ds,\ t\geq 0 \, .$$
\end{defn}

%In this case it need the following condition on exponential stability:
%\begin{enumerate}
%\item [{\bf (A)}]  $(R(t))_{t\geq0}$ has uniform exponential stability. That is there exists positive constants
%$M,\delta>0$ such that for all $t\geq0$ we have $\|R(t)\|\leq Me^{-\delta t}$.
%\end{enumerate}
%
%Under some clear modifications of the theorems \ref{theoaaa1}, \ref{theoaaa12} and \ref{th33}, it is possible to give the results on existence and uniqueness of the asymptotically almost automorphic mild solution to the integro-differential equation (\ref{eqAX01})-(\ref{eqAXX01}), the details are omitted.

\begin{defn}{\bf (Uniform exponential stability).} {\rm 
The resolvent operator  $(R(t))_{t\geq0}$ has uniform exponential stability, if there exists positive constants
$M,\delta>0$ such that for all $t\geq0$ we have $\|R(t)\|\leq Me^{-\delta t}$.}
\end{defn}
\noindent The following technical condition will be needed:\\

\noindent {\bf (A)} $(R(t))_{t\geq0}$ has uniform exponential stability.\\
%%%%%%%%%%%%%%% BASIC RFESULTS
%$$\cdots HASTA \, \, \, \, ACA \, \, \, \, AL \, \, \, \, MENOS $$

\begin{thm}\label{th31} Let $f \in AAA(\R^+\times\X;\X)$, $g: AAA(\R^+; \X)\to \X$, $\{R(t)\}_{t\geq0}$ the resolvent operator which satisfies condition {\bf (A)}, $\rho >0$, and the set $\Delta_0=\{y \in AAA(\R^+,\X):||y-y_0||_{\infty}\leq \rho\},$ where 
$$ y_0(t)=R(t)(u_0+g(0))+\int_{0}^{t}R(t-s)f(s,0)ds.$$
Suppose that $||y_0||_{\infty}\leq \rho$, there exists positive constants $L_f , L_g$ such that:
$$||f(t,y(t))-f(t,x(t))||\leq L_f||y(t)-x(t)||,\ y,x \in \Delta_0, t\in\R^+\, ,$$
and $g:AAA(\R^+;\X)\to \X$ is $L_g$-Lipschitz in $\Delta_0$. If the constants $L_g, L_f,\rho, M,\delta$ satisfies the inequality:
\begin{equation}\label{eq33}
\delta L_g+L_f < \dfrac{\rho \delta}{M(\rho+||y_0||_{\infty})}\, ;
\end{equation}
%\end{enumerate}
then, equation (\ref{eqAX01})-(\ref{eqAXX01}) has a unique mild solution in $\Delta_0$.
\end{thm}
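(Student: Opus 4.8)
The plan is to recast the mild-solution identity of Definition \ref{d15} as a fixed point problem and apply the abstract local existence result Theorem \ref{teos1}, following closely the argument already used for Theorem \ref{theoaaa1}. I would define the operator $\Gamma:AAA(\R^+;\X)\to AAA(\R^+;\X)$ by
$$\Gamma y(t)=R(t)(u_0+g(y))+\int_0^t R(t-s)f(s,y(s))ds\, ,$$
so that a function $u$ is a mild solution of (\ref{eqAX01})-(\ref{eqAXX01}) if and only if $\Gamma u=u$. The first task is to verify that $\Gamma$ is well defined. For this I would note that, by condition \textbf{(A)}, the map $t\mapsto R(t)(u_0+g(y))$ decays exponentially and hence belongs to $C_0(\R^+;\X)\subset AAA(\R^+;\X)$; that the composition $s\mapsto f(s,y(s))$ lies in $AAA(\R^+;\X)$ by the composition Lemma \ref{l22}; and that the resolvent convolution $\int_0^t R(t-s)f(s,y(s))ds$ then remains asymptotically almost automorphic, this being the autonomous, resolvent-kernel specialization of Lemma \ref{l23} (the same invariance established in \cite{16}).

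Next I would identify the center of the ball as $y_0=\Gamma(0)$: evaluating at the zero function gives $\Gamma(0)(t)=R(t)(u_0+g(0))+\int_0^t R(t-s)f(s,0)ds=y_0(t)$, which is exactly $T(0)$ in the language of Theorem \ref{teos1}, and the hypothesis $||y_0||_{\infty}\leq\rho$ provides the remaining structural requirement on $\Delta_0$.

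The core of the proof is the Lipschitz estimate on $\Delta_0$. For $y_1,y_2\in\Delta_0$ I would write
$$\Gamma y_1(t)-\Gamma y_2(t)=R(t)\big(g(y_1)-g(y_2)\big)+\int_0^t R(t-s)\big(f(s,y_1(s))-f(s,y_2(s))\big)ds\, ,$$
and then apply $||R(t)||\leq Me^{-\delta t}$ together with the $L_g$-Lipschitz continuity of $g$ and the $L_f$-Lipschitz continuity of $f$. The boundary term is controlled by $ML_g||y_1-y_2||_{\infty}$ since $e^{-\delta t}\leq 1$, while the convolution is bounded by $ML_f||y_1-y_2||_{\infty}\int_0^t e^{-\delta(t-s)}ds\leq \frac{M L_f}{\delta}||y_1-y_2||_{\infty}$. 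Collecting both contributions yields the Lipschitz constant $L_\Gamma=\frac{M}{\delta}(\delta L_g+L_f)$ for $\Gamma$ on $\Delta_0$.

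Finally, the contraction hypothesis of Theorem \ref{teos1}, namely $L_\Gamma<\frac{\rho}{\rho+||y_0||_{\infty}}$, is, after multiplying through by $\delta/M$, precisely the hypothesized inequality (\ref{eq33}); Theorem \ref{teos1} then furnishes a unique fixed point of $\Gamma$ in $\Delta_0$, which is the desired unique asymptotically almost automorphic mild solution. I expect the only genuinely delicate point to be the invariance claim in the first step, that convolution against the exponentially stable resolvent preserves $AAA(\R^+;\X)$; however, once $f(\cdot,y(\cdot))\in AAA(\R^+;\X)$ is secured by Lemma \ref{l22}, this is the autonomous counterpart of the machinery already developed in Lemmas \ref{l22}-\ref{l23} and in \cite{16}, and so reduces to invoking those results.
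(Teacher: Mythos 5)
Your proposal is correct and follows exactly the route the paper intends: the paper omits the details of this proof, remarking only that the operator $\Gamma u(t)=R(t)\left(u_0+g(u)\right)+\int_0^t R(t-s)f(s,u(s))ds$ of (\ref{finop}) is the crucial ingredient, and your argument fleshes this out precisely as in the paper's proof of Theorem \ref{theoaaa1} (well-definedness via Lemmas \ref{l22}--\ref{l23} and \cite{16}, the identification $y_0=\Gamma(0)$, the Lipschitz constant $ML_g+\tfrac{M}{\delta}L_f$, and Theorem \ref{teos1}, whose hypothesis is inequality (\ref{eq33}) after multiplying by $M/\delta$).
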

We comment that, in the proof of theorem \ref{th31}, it is crucial the operator  $\Gamma: AAA(\R^+;\X) \to AAA(\R^+;\X)$ such that

\begin{equation}\label{finop}
\Gamma u(t)=R(t)\left( u_0+g(u) \right)+\int_0^tR(t-s)f(s,u(s))ds\, .
\end{equation}

%From Theorem \ref{teos2} we can obtain for the equation (\ref{eqAX01})-(\ref{eqAXX01}) the following result
\noindent Our last abstract result is the following:
\begin{thm}\label{th313}
Let $\Gamma$ be the operator defined in (\ref{finop}), $\{R(t)\}_{t\geq 0}$ the resolvent operator which satisfies
condition {\bf (A)}. Let $\rho>0$ and the set 
$\Delta_0=\{y\in AAA(\R^+;\X): ||y-y_0||_{\infty}\leq\rho\},$ where 
 $$y_0(t)=R(t)(u_0+g(0))+\int_0^t R(t-s)f(s,0)ds\, .$$
Further suppose $f,g \in AAA(\R^+\times\X;\X)$ are $L_f, L_g$-Lipschitz (respectively) for the second variable in $\Delta_0$. If  the inequality 
$$0<\theta=\left( 1-ML_g-\dfrac{M}{\delta}L_f \right) ^{-1}||\Gamma y_0-y_0||_{\infty}\leq\rho\, $$
%and $\dfrac{L_g M}{\delta}<1.$ 
holds; then, equation (\ref{eqAX01})-(\ref{eqAXX01}) has a unique mild solution  $y\in \Delta_0$ . %such that $||y-y_0||_{\infty}\leq \rho.$\\
\end{thm}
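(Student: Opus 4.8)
The plan is to realize the mild-solution equation of Definition \ref{d15} as a fixed point problem $\Gamma u = u$ for the operator $\Gamma$ in (\ref{finop}) and then invoke the abstract fixed point result Theorem \ref{teos2} with $\varrho=\rho$. First I would verify that $\Gamma$ is a well-defined self-map of $AAA(\R^+;\X)$. Since $g$ takes values in $\X$, the term $R(t)(u_0+g(u))$ is a fixed vector propagated by the resolvent, and condition {\bf (A)} gives $\|R(t)\|\le Me^{-\delta t}$, so this term lies in $C_0(\R^+;\X)\subset AAA(\R^+;\X)$. For the integral term, the composition Lemma \ref{l22} ensures $f(\cdot,u(\cdot))\in AAA(\R^+;\X)$ whenever $u\in AAA(\R^+;\X)$, and the invariance of $AAA(\R^+;\X)$ under the exponentially weighted convolution operator $u\mapsto\int_0^t R(t-s)u(s)\,ds$ (established for analogous operators, cf. Lemma \ref{l23} and \cite{16}) closes the argument. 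A direct substitution shows $y_0=\Gamma(0)$, matching the requirement $y_0=T(0)$ in Theorem \ref{teos2}.

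The second step is the Lipschitz estimate on $\Delta_0$. For $y_1,y_2\in\Delta_0$, using that $g$ is $L_g$-Lipschitz, $f$ is $L_f$-Lipschitz in the second variable, and $\|R(t)\|\le Me^{-\delta t}$, I would bound
\[
\|\Gamma y_1(t)-\Gamma y_2(t)\|\le ML_g\|y_1-y_2\|_\infty + L_f\int_0^t Me^{-\delta(t-s)}\|y_1(s)-y_2(s)\|\,ds.
\]
Since $\int_0^t e^{-\delta(t-s)}\,ds\le 1/\delta$, this yields
\[
\|\Gamma y_1-\Gamma y_2\|_\infty\le\Big(ML_g+\tfrac{M}{\delta}L_f\Big)\|y_1-y_2\|_\infty,
\]
so $\Gamma$ is Lipschitz on $\Delta_0$ with constant $L_\Gamma=ML_g+\tfrac{M}{\delta}L_f$.

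Finally, the standing hypothesis $0<\theta=(1-ML_g-\tfrac{M}{\delta}L_f)^{-1}\|\Gamma y_0-y_0\|_\infty\le\rho$ forces $L_\Gamma<1$ (positivity of $\theta$ requires $1-L_\Gamma>0$) and provides exactly the compatibility condition $0<\theta=(1-L_\Gamma)^{-1}\|\Gamma y_0-y_0\|_\infty\le\varrho$ of Theorem \ref{teos2} with $\varrho=\rho$. Applying that theorem (equivalently, checking that $\Gamma$ maps the closed ball $\Theta_0=\{y\in AAA(\R^+;\X):\|y-y_0\|_\infty\le\theta\}$ into itself and contracts there, then using the Banach fixed point theorem) yields a unique fixed point $y\in\Delta_0$, which is the desired unique asymptotically almost automorphic mild solution of (\ref{eqAX01})-(\ref{eqAXX01}). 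I expect the only genuinely technical point to be the invariance of $AAA(\R^+;\X)$ under the convolution integral term; the contraction estimate and the final invocation of Theorem \ref{teos2} are routine once the self-map property is in place.
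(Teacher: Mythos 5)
Your proposal is correct and follows essentially the same route the paper intends: the paper states Theorem \ref{th313} without a written proof, but its proof of the parallel nonautonomous result (Theorem \ref{theoaaa12}) is exactly your argument --- the operator $\Gamma$ of (\ref{finop}) is a well-defined self-map of $AAA(\R^+;\X)$ by Lemmas \ref{l22}--\ref{l23}, is Lipschitz on $\Delta_0$ with constant $ML_g+\tfrac{M}{\delta}L_f$, and Theorem \ref{teos2} (equivalently, invariance and contraction on the ball of radius $\theta$ about $y_0=\Gamma(0)$ plus Banach's fixed point theorem) gives the unique fixed point in $\Delta_0$. Your derivation of $L_\Gamma<1$ from the positivity of $\theta$ and your identification $y_0=\Gamma(0)$ match the paper's setup exactly.
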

%
%\paragraph{Demostración.}
%La demostración se sigue del Teorema  (\ref{teos2}); en efecto, sea
%$$\mathfrak{B}(y_0,\theta)=\{z\in AAA(\R^+;\X): ||z-y_0||\leq\theta\},$$
%consideremos $z\in \mathfrak{B}(y_0,\theta)$, entonces:
%\begin{eqnarray*}
%|(\Gamma z)(t)-y_0(t)|&\leq&|(\Gamma z)(t)-(\Gamma y_0)(t)|+|(\Gamma y_0)(t)-y_0(t)|\\
%&\leq&(\dfrac{L_g M}{\delta})||z-y_0||_{\infty}+||\Gamma y_0-y_0||_{\infty}.\\
%&\leq& \theta.
%\end{eqnarray*}
%Notemos que de la demostración del ítem $2)$ del Teorema (\ref{th33}), se concluye que la constante de Lipschitz para
%$\Gamma$ es $L_{\Gamma}=\dfrac{L_g M}{\delta}$, por lo que es posible obtener $\Gamma
%(\mathfrak{B}(y_0,\theta))\subseteq \mathfrak{B}(y_0,\theta)$. $\square$\\

\section{Some applications}\label{appl}

\subsection{Heat conduction in materials with memory.\\\\}

%In this section we give applications of the results obtained in the previous section to the  heat conduction in
%materials with memory. \\
%rougly a material with memory is one in which the energy en materiales para los cuales la energía recibida no se
%desvanece instantáneamente (sino que permanece como efecto de recuerdo y tiende a cero al transcurrir el tiempo). La
%descripción de la ecuación que rige éste fenómeno es como sigue:

Let $\Omega \subset \R^3$ be an open, connected and bounded set, with $\partial\Omega$ its $C^{\infty}$ boundary. The
conduction of the heat in materials with memory is described with the following partial integro-differential equation:

\begin{eqnarray}\label{EQ1}
\frac{\partial^2 \theta}{\partial^2 t}(x,t)+\beta(0)\frac{\partial \theta}{\partial
t}(x,t)&=&\alpha(0)\Delta\theta (x,t)-\int_{-\infty}^{t}\beta'(t-s)\frac{\partial \theta}{\partial
t}(x,s)ds+\\
&+&\int_{-\infty}^{t}\alpha'(t-s)\Delta\theta(x,s)ds+a(t)b(\theta(t)),\nonumber
\end{eqnarray}
where $\alpha, \beta \in C^2([0,+\infty[;\R)$, are the thermal relaxation function of the heat flux and the energy
relaxation function respectively, with $\alpha (0),\beta (0)$ positives and $\Delta$ is the Laplace operator in $\Omega$.
If the material is isotropic, the temperature $\theta(x,t)$ not depends on the position  $x\in \Omega$ and is know for
all  $t\leq 0$, then (\ref{EQ1}) has the form:

\begin{eqnarray}\label{EQ2}
\theta^{''}(t)+\beta(0)\theta^{'}(t)&=&\alpha(0)\Delta\theta (t)-\int_{0}^{t}\beta'(t-s)\theta^{'}(s)ds+\\
&+&\int_{0}^{t}\alpha'(t-s)\Delta\theta(s)ds+a(t)b(\theta(t)).\nonumber
\end{eqnarray}
If we introduce the new function $\eta(t)=\theta'(t)$, then the previous ones give us the system:
\begin{eqnarray*}
 \theta'(t) &=& \eta(t).\\
\eta'(t)&=&-\beta(0)\eta(t)+\alpha(0)\Delta\theta (t)-\int_{0}^{t}\beta'(t-s)\eta(s)ds+
\int_{0}^{t}\alpha'(t-s)\Delta\theta(s)ds+a(t)b(\theta(t)),
\end{eqnarray*}
and in matricial form is:

\begin{eqnarray*}
\begin{bmatrix}
 {\theta(t)}\\
{\eta(t)}
\end{bmatrix}^{'}
=\begin{bmatrix}
{0}&{I}\\
{\alpha (0)\Delta}&{-\beta(0)I}
\end{bmatrix}
\begin{bmatrix}
 {\theta(t)}\\
{\eta(t)}
\end{bmatrix}+
\int_{0}^{t}\begin{bmatrix}
{0}&{0}\\
{\alpha'(t-s)\Delta}&{-\beta'(t-s)I}
\end{bmatrix}\begin{bmatrix}
 {\theta(s)}\\
{\eta(s)}
\end{bmatrix}ds+\begin{bmatrix}
{0}\\
{a(t)b(\theta(t))}.
\end{bmatrix}.
\end{eqnarray*}
Let us consider the ambient space $X=H_0^1(\Omega)\times L^2(\Omega)$ and identify the operators:
\begin{eqnarray*}
 A=\begin{bmatrix}
{0}&{I}\\
{\alpha (0)\Delta}&{-\beta(0)I}
\end{bmatrix} , B(t-s)=\begin{bmatrix}
{0}&{0}\\
{\alpha'(t-s)\Delta}&{-\beta'(t-s)I}
\end{bmatrix},
\end{eqnarray*}
furthermore for $\begin{bmatrix}
{\theta}\\
{\eta}
\end{bmatrix}$ $\in X$ identify:

\begin{eqnarray*}
 u=\begin{bmatrix}
{\theta}\\
{\eta}
\end{bmatrix} , f(t,u)=\begin{bmatrix}
{0}\\
{a(t)b(\theta)}
\end{bmatrix},
\end{eqnarray*}
and for each
$\begin{bmatrix}
  {\theta_0}\\{\eta_0}
 \end{bmatrix} \in X
$ consider the initial condition $u(0)=u_0=\begin{bmatrix}
  {\theta_0}\\{\eta_0}
 \end{bmatrix}$. 
Then we have the following integro-differential equation with nonlocal initial condition:
\begin{eqnarray}\label{EQ3}
  u'(t)&=&Au(t)+\int_0^t B(t-s)u(s)ds+f(t,u(t)), t\in \R^+,\\
 u(0)&=&u_0+h(u),\label{EQ33}
\end{eqnarray}
with $D(A)=(H^2(\Omega)\cap H_0^1(\Omega))\times H_0^1(\Omega)$.

It follows from \cite{GCH} (see also \cite{19}), that $A$ generates a semigroup $\{T(t)\}_{t\geq 0}$ with
$||T(t)||\leq Me^{-\gamma t}$ for all $t\geq 0$ and $M,\gamma$ positive constants . Let $B(t)=F(t)A$, where:

\begin{eqnarray*}
 F(t)=\begin{bmatrix}
{0}&{0}\\
{-\beta'(t)I+\beta(0)\frac{\alpha'(t)}{\alpha(0)}I}&{\frac{\alpha'(t)}{\alpha(0)}I} 
      \end{bmatrix}.
\end{eqnarray*}
Let us consider the conditions:\\
%\begin{enumerate}

\noindent $R_1)$. $\alpha'(t)e^{\gamma t},\alpha^{''}(t)e^{\gamma t},\beta'(t)e^{\gamma t},\beta^{''}(t)e^{\gamma t}$
 are bounded and uniformly continuous functions. \\
 
\noindent $R_2)$. Let $p,q >1$ such that $1/p+1/q=1$ and for all $ t\geq0$, $$\max\{||F_{21}(t)||,||F_{22}(t)||\}\leq \frac{\gamma e^{-\gamma t}}{pM}, \ \
\max\{||F_{21}'(t)||,||F_{22}'(t)||\}\leq \frac{\gamma^2 e^{-\gamma t}}{(pM)^2}\, .$$
%\end{enumerate}
Under $R_1)$ and $R_2)$, R.C. Grimmer in \cite{19} showed that the equation (\ref{EQ3}) 
has a resolvent operator $\{R(t)\}_{t\geq 0}$ which satisfies condition {\bf (A)}, that is  $\forall t \geq0: \
||R(t)||\leq Me^{-\frac{\gamma t}{q}}.$

Now, assume that\\

%, then $f\in AAA(\R^+\times X;X)$, let $b:H_0^1(\Omega)\to L^2(\Omega)$. \\
%\newline
\begin{enumerate}
\item  $$y_0(t)=R(t)(u_0+h(0))+\int_0^tR(t-s)f(s,0)ds \, .$$
\item  $h(u)=\begin{bmatrix}
  {h_1(\xi)}\\{h_2(\eta)}
 \end{bmatrix} $ , where $h_1: C(\R^+; H_0^1(\Omega))\to H_0^1(\Omega)$ and $h_2: C(\R^+; L^2(\Omega))\to L^2(\Omega)$ are Lipschitz with the same Lipschitz's constant $\dfrac{\rho}{pM(\rho+||y_0||_{\infty})}$.\\
 
\item   $b: H_0^1(\Omega)\to L^2(\Omega)$ is also Lipschitz, with Lipschitz's constant $\dfrac{\gamma \rho}{qM||a||(\rho +||y_0||_{\infty})}$.\\
\end{enumerate} 
With this preliminaries, the following theorem is an application of theorem \ref{th31}:
\begin{thm} Let $a\in AAA(\R^+;\R)$ % Let $C=\sup_{s\in\R}||g(s,0)||$ 
and consider a real number $\rho >0$ such that:
\begin{equation}\label{EQ4}
 \rho \geq M \left( ||u_0||+ ||h(0)||+\dfrac{q}{\gamma}||a||\;||b(0)||\right).
\end{equation}
Suppose that $(1)-(3)$ holds.
%Let $\Delta_0$ [descrito en dicho teorema]...,  suppose that $b$ is a $L_b$-Lipschitz function, such that:
%\begin{equation}\label{EQ5}
%L_b\leq \frac{\delta
%\varrho}{2||a||M(\varrho+||y_0||)},
%\end{equation}
%where $y_0$... [también se describe en el teorema \ref{th31}]. 
Then, equation (\ref{EQ3})-(\ref{EQ33}) has a unique mild solution $y\in AAA(\R^+;\X)$ such that $||y-y_0||_{\infty}\leq \rho$.
\end{thm}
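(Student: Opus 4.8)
The plan is to realize the heat-conduction problem (\ref{EQ3})--(\ref{EQ33}) as a concrete instance of the abstract equation (\ref{eqAX01})--(\ref{eqAXX01}) and then to apply Theorem \ref{th31} verbatim; the whole argument reduces to checking that the specific operators and data meet the hypotheses of that theorem. First I would record the functional framework already set up above: $A$ generates the exponentially stable semigroup $\{T(t)\}_{t\geq 0}$, $B(t)=F(t)A$, and under $R_1)$ and $R_2)$ Grimmer's result \cite{19} furnishes a resolvent operator $\{R(t)\}_{t\geq 0}$ with $||R(t)||\leq Me^{-\gamma t/q}$. Thus condition {\bf (A)} holds with decay rate $\delta=\gamma/q$. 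I then identify the nonlocal operator $g:=h$ and the nonlinearity $f(t,u)=(0,\,a(t)b(\theta))$, where $u=(\theta,\eta)\in\X=H_0^1(\Omega)\times L^2(\Omega)$, so that the data match those of Theorem \ref{th31}.

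Next I would verify the structural and Lipschitz hypotheses. Since $a\in AAA(\R^+;\R)$ and $b$ is Lipschitz, hence continuous, the map $(t,\theta)\mapsto a(t)b(\theta)$ is asymptotically almost automorphic in $t$ uniformly on bounded subsets of $H_0^1(\Omega)$, so that $f\in AAA(\R^+\times\X;\X)$; the composition Lemma \ref{l22} then guarantees that $y\mapsto f(\cdot,y(\cdot))$ leaves $AAA(\R^+;\X)$ invariant, whence the operator $\Gamma$ in (\ref{finop}) is well defined. Because $f$ depends only on the first coordinate and $||\theta-\theta_1||_{H_0^1}\leq ||u-u_1||_{\X}$, assumption $(3)$ yields that $f$ is $L_f$-Lipschitz with $L_f=||a||\,L_b$, where $L_b$ is the Lipschitz constant of $b$ prescribed in $(3)$. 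Likewise the diagonal form $h(u)=(h_1(\xi),h_2(\eta))$ together with assumption $(2)$ makes $g=h$ Lipschitz with $L_g=\dfrac{\rho}{pM(\rho+||y_0||_\infty)}$.

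Finally I would verify the two quantitative requirements of Theorem \ref{th31}. For the size of $y_0$, using $f(s,0)=(0,\,a(s)b(0))$ and $||R(t)||\leq Me^{-\gamma t/q}$ I estimate
$$||y_0(t)||\leq Me^{-\gamma t/q}\big(||u_0||+||h(0)||\big)+||a||\,||b(0)||\int_0^t Me^{-\gamma(t-s)/q}\,ds\leq M\Big(||u_0||+||h(0)||+\tfrac{q}{\gamma}||a||\,||b(0)||\Big),$$
which is bounded by $\rho$ precisely by hypothesis (\ref{EQ4}); hence $||y_0||_\infty\leq\rho$. It then remains to substitute $\delta=\gamma/q$, $L_g$ and $L_f$ into the contraction inequality (\ref{eq33}) of Theorem \ref{th31}. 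The hard part, and really the only delicate point, is this last piece of bookkeeping: the conjugate exponents $1/p+1/q=1$ are built into the Lipschitz constants of $(2)$ and $(3)$ exactly so that the two contributions $\delta L_g$ and $L_f$ split the target bound $\dfrac{\rho\delta}{M(\rho+||y_0||_\infty)}$, and one must track the factors $p,q,\gamma,M,\rho$ carefully to confirm that their sum meets the requirement. Once (\ref{eq33}) is established, Theorem \ref{th31} delivers the unique mild solution $y\in AAA(\R^+;\X)$ with $||y-y_0||_\infty\leq\rho$, which is exactly the assertion.
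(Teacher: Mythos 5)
Your strategy is the same as the paper's --- the paper prints no proof at all, simply declaring the theorem ``an application of theorem \ref{th31}'' --- and most of your verification is sound: the identification $\delta=\gamma/q$, $g:=h$, $f(t,u)=(0,a(t)b(\theta))$, the membership $f\in AAA(\R^+\times\X;\X)$, the Lipschitz constants $L_g=\frac{\rho}{pM(\rho+\|y_0\|_\infty)}$ and $L_f=\|a\|L_b$, and the estimate $\|y_0\|_\infty\le\rho$ from (\ref{EQ4}) are all correct and are exactly what the paper leaves implicit.

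The genuine gap is the one step you explicitly defer as ``bookkeeping'': substituting these constants into (\ref{eq33}) and confirming the inequality. It does not hold. With $\delta=\gamma/q$ and the constants of assumptions $(2)$--$(3)$,
$$\delta L_g+L_f=\frac{\gamma}{q}\cdot\frac{\rho}{pM(\rho+\|y_0\|_\infty)}+\frac{\gamma\rho}{qM(\rho+\|y_0\|_\infty)}=\left(\frac{1}{p}+1\right)\frac{\gamma\rho}{qM(\rho+\|y_0\|_\infty)}\, ,$$
whereas the right-hand side of (\ref{eq33}) equals $\frac{\rho\delta}{M(\rho+\|y_0\|_\infty)}=\frac{\gamma\rho}{qM(\rho+\|y_0\|_\infty)}$. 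Thus the contribution of $L_f$ alone already saturates the bound (equivalently, $\frac{M}{\delta}L_f=\frac{\rho}{\rho+\|y_0\|_\infty}$ exhausts the entire budget needed for $\Gamma(\Delta_0)\subseteq\Delta_0$), and adding $\delta L_g>0$ violates the strict inequality: the splitting $\frac{1}{p}+\frac{1}{q}=1$ you invoke never materializes --- one gets $\frac{1}{p}+1$ instead. For your splitting to occur, the Lipschitz constant of $b$ in assumption $(3)$ would have to be $\frac{\gamma\rho}{q^{2}M\|a\|(\rho+\|y_0\|_\infty)}$ (note $q^{2}$, not $q$), precisely because Grimmer's resolvent decays like $e^{-\gamma t/q}$ and not like $e^{-\gamma t}$. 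This mismatch is inherited from the paper's own hypotheses (an apparent typo that the paper, giving no proof, never had to confront), but your proposal certifies a computation that in fact fails when carried out; a complete argument must correct the constant in $(3)$ (equivalently, work with decay rate $\gamma$ rather than $\gamma/q$) before Theorem \ref{th31} can be invoked, since with the constants as stated neither the contractivity of $\Gamma$ nor the invariance of $\Delta_0$ follows.
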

We comment that, under plausible modifications, it is also possible to give the sufficient conditions under which theorem \ref{th313} is applicable to equation (\ref{EQ3})-(\ref{EQ33}).

\subsection{Semilinear parabolic evolution equations with finite delay. \\}

Let $\tau >0$ be a fixed real number. Let us consider the following semilinear evolution equation with delay:
\begin{eqnarray}\label{semieq}
x'(t) &=& A(t)x(t)+f(t,x(t-\tau)), \, \, \, t\, \, \,  \in \mathbb{R}\, .
\end{eqnarray}
In what follows, we assume that the family $\{A(t)\}_{t\in\R}$ satisfies the Acquistapace-Terrini condition {\bf (AT)}, but on the whole real line, that is $J=\R$ in section \ref{SecAT}. Also, it is assumed that conditions {\bf (bAA)} and {\bf (Ex)} holds. Under this basic assumptions, we define the mild solution to equation (\ref{semieq}), as a continuous function $x:\R \to \X$ which satisfies the following integral equation
\begin{equation}\label{mildde}
x(t)=U(t,a)x(a)+\int_a^t U(t,s)f(s,x(s-\tau)) ds\, .
\end{equation}
Note that, since the evolution family is exponentially bounded, then the mild solution satisfy
\begin{equation}\label{smildde}
x(t)=\int_{-\infty}^t U(t,s)f(s,x(s-\tau)) ds\, .
\end{equation}
Moreover, since $U(t,s)\in \mathcal{B}(\X)$\, for $(t,s) \in \triangle_{J}^+:=\{(t,s)\in J \times J\; :\; t\geq s \}\; $, then it is easy to check that a solution to the integral equation (\ref{smildde}), is a solution to (\ref{mildde}).

\begin{thm} Let $f\in AA(\R\times \X; \X)$ be $L_f$-Lipschitz. Let $\rho$ be a positive real number such that 
$$\dfrac{M}{\delta} ||f(\cdot,0)||_{\infty} \leq \rho \, \, \, \, \,and \, \, \, \, 
ML_f< \dfrac{\rho \delta}{\rho+ ||x_0||_{\infty}}\, ;$$
where, 
$$x_0(t)=\int_{-\infty}^tU(t,s)f(s,0)ds\, .$$
Then, there is a unique mild solution $x\in AA(\R;\X)$ to equation (\ref{semieq}) such that $||x-x_0||_{\infty}\leq \rho$ .
\end{thm}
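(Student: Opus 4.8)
The plan is to realize the mild solution as the unique fixed point of the integral operator $\Gamma:AA(\R;\X)\to AA(\R;\X)$ given by
$$\Gamma x(t)=\int_{-\infty}^t U(t,s)\,f(s,x(s-\tau))\,ds\,,$$
and then to invoke the abstract local result Theorem \ref{teos1}. Note first that $\Gamma(0)(t)=\int_{-\infty}^t U(t,s)f(s,0)\,ds=x_0(t)$, so the center $y_0=\Gamma(0)$ of the ball in Theorem \ref{teos1} is exactly the function $x_0$ from the statement, and consequently $x_0\in AA(\R;\X)$ once $\Gamma$ is shown to be well defined. Moreover a fixed point $x=\Gamma x$ is precisely a solution of the integral equation (\ref{smildde}) and hence, as remarked just before the statement, a mild solution of (\ref{semieq}).

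First I would check that $\Gamma$ maps $AA(\R;\X)$ into itself, which is the only genuinely nonroutine step. This follows from Lemma \ref{lem12} applied to the kernel $C_1(t,s,u,v):=U(t,s)f(s,v)$ (independent of $u$) and the deviation $a_1(s)=s-\tau$. Condition {\bf (H1)} holds because a translate of an almost automorphic function is again almost automorphic, so $x(\cdot-\tau)\in AA(\R;\X)$ whenever $x\in AA(\R;\X)$. That $C_1$ is Bi-almost automorphic is a consequence of {\bf (bAA)} together with $f\in AA(\R\times\X;\X)$: given a sequence, one extracts a common subsequence $\{s_n\}$ along which $U(t+s_n,s+s_n)v\to\tilde U(t,s)v$ and $f(s+s_n,v)\to\tilde f(s,v)$ (and back), whence $C_1(t+s_n,s+s_n,u,v)\to\tilde U(t,s)\tilde f(s,v)=:\tilde C_1(t,s,u,v)$. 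Using the exponential stability {\bf (Ex)}, $\|U(t,s)\|\le Me^{-\delta(t-s)}$, the boundedness of $f$ (recall $AA(\R\times\X;\X)\subset BC$) and its Lipschitz constant supply {\bf (H3)} and {\bf (H4)} with
$$\lambda_1(t,s)=M\|f\|_\infty\,e^{-\delta(t-s)}\,,\qquad \mu_1(t,s)=\tilde\mu_1(t,s)=ML_f\,e^{-\delta(t-s)}\quad(s\le t)\,,$$
the limit functions inheriting the same bounds; since $\sup_t\int_{-\infty}^t e^{-\delta(t-s)}\,ds=1/\delta$, the integrability requirements hold, with $N_1=ML_f/\delta$.

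It then remains to verify the two numerical hypotheses of Theorem \ref{teos1} for $T=\Gamma$, $y_0=x_0$ and $\varrho=\rho$. The estimate $\|U(t,s)\|\le Me^{-\delta(t-s)}$ gives
$$\|x_0(t)\|\le\int_{-\infty}^t Me^{-\delta(t-s)}\|f(\cdot,0)\|_\infty\,ds=\frac{M}{\delta}\|f(\cdot,0)\|_\infty\le\rho\,,$$
which is the first hypothesis $\|x_0\|_\infty\le\rho$. For the Lipschitz constant, the same bound together with the $L_f$-Lipschitz property of $f$ yields, for all $x_1,x_2\in AA(\R;\X)$,
$$\|\Gamma x_1(t)-\Gamma x_2(t)\|\le ML_f\|x_1-x_2\|_\infty\int_{-\infty}^t e^{-\delta(t-s)}\,ds=\frac{ML_f}{\delta}\|x_1-x_2\|_\infty\,,$$
so $L_\Gamma=ML_f/\delta$. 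After dividing by $\delta$, the second hypothesis $ML_f<\rho\delta/(\rho+\|x_0\|_\infty)$ is exactly $L_\Gamma<\rho/(\rho+\|x_0\|_\infty)$. Theorem \ref{teos1} thus delivers a unique fixed point of $\Gamma$ in $\Delta_0=\{x\in AA(\R;\X):\|x-x_0\|_\infty\le\rho\}$, which is the desired unique almost automorphic mild solution. The exponential decay coming from {\bf (Ex)} is what makes both the convergence of the improper integral and the dominated-convergence argument behind the invariance go through, so that step carries essentially all the analytic weight; the remaining estimates are routine.
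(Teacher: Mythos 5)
Your proof is correct and follows exactly the route the paper intends for this application theorem (which the paper states without writing out a proof): realize the mild solution as a fixed point of $\Gamma x(t)=\int_{-\infty}^{t}U(t,s)f(s,x(s-\tau))\,ds$, get invariance of $AA(\R;\X)$ from Lemma \ref{lem12} applied to the kernel $C_1(t,s,u,v)=U(t,s)f(s,v)$ with $a_1(s)=s-\tau$ under {\bf (bAA)} and {\bf (Ex)}, and then invoke the local fixed-point Theorem \ref{teos1}, whose two numerical hypotheses are precisely the stated inequalities $\frac{M}{\delta}\|f(\cdot,0)\|_{\infty}\leq\rho$ and $ML_f<\rho\delta/(\rho+\|x_0\|_{\infty})$. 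This matches the pattern of the paper's proofs of the analogous results (Theorems \ref{th24} and \ref{theoaaa1}), so there is nothing to correct.
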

Our last result read as follows:

\begin{thm}\label{theoaaa121final}
Suppose that $f \in AAA(\R^+\times\X;\X)$ is $L_f$-Lipschitz. 
%Let $\Gamma$ the operator defined in (\ref{EQITD03}), let $\{R(t)\}_{t\geq0}$ be the resolvent operator that satisfies
%condition {\bf (A)} and $g \in AAA(\R^+\times\X;\X)$  $L_g$-Lipschitz in the second variable. 
Let $\rho>0$ and the set 
$\Delta_0=\{x\in AA(\R;\X): ||x-x_0||_{\infty}\leq\rho\},$ where 
 $$x_0(t)=\int_{-\infty}^tU(t,s)f(s,0)ds\, .$$
Further suppose that $0<\theta=(1-\dfrac{M}{\delta}L_f)^{-1}||\Gamma x_0-x_0||_{\infty}\leq\rho $, where $\Gamma$ is defined from the right hand side of (\ref{smildde}). 
 Then, equation (\ref{semieq}) has a unique mild solution  $x\in AA(\R;\X)$ such that $||x-x_0||_{\infty}\leq \rho .$
\end{thm}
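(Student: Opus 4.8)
The plan is to read this as a direct application of the abstract contraction result Theorem \ref{teos2}, with the operator being $\Gamma$ acting on the Banach space $AA(\R;\X)$. First I would write
$$\Gamma x(t) = \int_{-\infty}^t U(t,s) f(s, x(s-\tau))\, ds,$$
and observe that $\Gamma(0)(t) = \int_{-\infty}^t U(t,s) f(s,0)\, ds = x_0(t)$, so that $x_0 = \Gamma(0)$ plays exactly the role of $y_0 = T(0)$ in Theorem \ref{teos2}, and consequently $\Gamma x_0 = \Gamma(\Gamma(0))$ plays the role of $T y_0$.

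The first substantial step is to verify that $\Gamma$ is well defined, i.e.\ that it maps $AA(\R;\X)$ into itself. Since $x \in AA(\R;\X)$ implies that the translate $x(\cdot-\tau)$ is again almost automorphic, and $f$ is almost automorphic in $t$ uniformly on bounded sets and Lipschitz, the composition $s \mapsto f(s, x(s-\tau))$ belongs to $AA(\R;\X)$ by the composition property of almost automorphic functions. It then remains to invoke the invariance of $AA(\R;\X)$ under the integral operator $\int_{-\infty}^t U(t,s)(\cdot)\, ds$, which is the situation covered by Lemma \ref{lem12} applied to the kernel $C_1(t,s,u,v) := U(t,s) f(s,v)$. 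Here the exponential stability {\bf (Ex)} furnishes the $\lambda_1$-bound $\lambda_1(t,s) = M e^{-\delta(t-s)}$, for which $\sup_t \int_{-\infty}^t \lambda_1(t,s)\, ds = M/\delta < \infty$ yields {\bf (H3)}; the Bi-almost automorphy of $(t,s)\mapsto U(t,s)x$ assumed in {\bf (bAA)} together with the almost automorphy of $f$ gives the Bi-almost automorphy of $C_1$; and the global Lipschitz constant $L_f$ produces {\bf (H4)} with $\mu_1(t,s) = M e^{-\delta(t-s)} L_f$.

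Next I would derive the contraction estimate. For any $x,y \in AA(\R;\X)$, using {\bf (Ex)} and the Lipschitz hypothesis,
\begin{eqnarray*}
\|\Gamma x(t) - \Gamma y(t)\| &\leq& \int_{-\infty}^t M e^{-\delta(t-s)} L_f \|x(s-\tau) - y(s-\tau)\|\, ds\\
&\leq& M L_f \|x - y\|_\infty \int_{-\infty}^t e^{-\delta(t-s)}\, ds = \frac{M}{\delta} L_f \|x - y\|_\infty,
\end{eqnarray*}
so $\Gamma$ is Lipschitz on all of $AA(\R;\X)$ with constant $L_\Gamma = \frac{M}{\delta}L_f$. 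The very well-posedness of $\theta = (1 - \frac{M}{\delta}L_f)^{-1}\|\Gamma x_0 - x_0\|_\infty$ forces $\frac{M}{\delta}L_f < 1$, hence $L_\Gamma < 1$.

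Finally, I would apply Theorem \ref{teos2} with $T = \Gamma$, $L_T = \frac{M}{\delta}L_f$ and $\varrho = \rho$. Since $x_0 = \Gamma(0)$ is the relevant $y_0$ and $\Gamma x_0 = T y_0$, the standing hypothesis $0 < \theta = (1-L_T)^{-1}\|\Gamma x_0 - x_0\|_\infty \leq \rho$ is precisely what Theorem \ref{teos2} requires; it then yields a unique fixed point $x \in \Delta_0$ of $\Gamma$, which by (\ref{smildde}) — and the remark that a solution of (\ref{smildde}) is a solution of (\ref{mildde}) — is the unique mild solution of (\ref{semieq}) in $\Delta_0$. I expect the main obstacle to be the well-definedness step: one must carefully confirm that the kernel $U(t,s)f(s,v)$ genuinely satisfies {\bf (H3)}–{\bf (H4)} so that Lemma \ref{lem12} applies, in particular that combining the Bi-almost automorphy of the evolution family with the almost automorphy of $f$ indeed produces a Bi-almost automorphic kernel.
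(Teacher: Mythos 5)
Your proposal is correct and follows essentially the route the paper intends: the paper states this theorem without a written proof, as a direct application of Theorem \ref{teos2} in the exact pattern of the proof given for Theorem \ref{theoaaa12}, i.e.\ taking $T=\Gamma$ from the right-hand side of (\ref{smildde}), noting $x_0=\Gamma(0)$, obtaining the Lipschitz constant $\frac{M}{\delta}L_f$ from \textbf{(Ex)}, and getting well-definedness of $\Gamma$ on $AA(\R;\X)$ from the invariance machinery (Lemma \ref{lem12} with kernel $C_1(t,s,u,v)=U(t,s)f(s,v)$ under \textbf{(bAA)} and \textbf{(H1)} for $a_1(s)=s-\tau$). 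Your explicit verification of \textbf{(H3)}--\textbf{(H4)} for that kernel, and your observation that $0<\theta$ forces $\frac{M}{\delta}L_f<1$, are precisely the details the paper leaves implicit.
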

 
\section{acknowledge.}
M. Pinto has been partially supported by Fondecyt 1170466 and 1120709. A. Ch\'avez has been partially supported by Fondecyt Registro 64044-Per\'u, he also wants to thank the hospitality of his friend Mr. Alvaro Corval\'an Azagra.

\end{document}